\newcommand\mycolor[1]{}
\setlist[enumerate]{itemsep=0.3ex, topsep=0.3ex, label={\rm(\arabic*)}}
\setlist[itemize]{itemsep=0.3ex, topsep=0.3ex, leftmargin=4ex}
\newtheorem{theorem}[subsection]{Theorem}
\newtheorem{subtheorem}[subsubsection]{Theorem}
\newtheorem{sublemma}[subsubsection]{Lemma}
\newtheorem{subcorollary}[subsubsection]{Corollary}
\newtheorem{subremark}[subsubsection]{Remark}
\newtheorem{subexample}[subsubsection]{Example}
\newtheorem{subdefinition}[subsubsection]{Definition}
\newcommand\testshape{family=\f@family; series=\f@series; shape=\f@shape.}
\def\myemphInternal#1{\if n\f@shape%
\begingroup\itshape #1\endgroup\/%
\else\begingroup\sf\itshape\small #1\endgroup%
\fi}
\def\myemph{\futurelet\testchar\MaybeOptArgmyemph}
\def\MaybeOptArgmyemph{\ifx[\testchar \let\next\OptArgmyemph
                 \else \let\next\NoOptArgmyemph \fi \next}
\def\OptArgmyemph[#1]#2{\index{#1}\myemphInternal{#2}}
\def\NoOptArgmyemph#1{\myemphInternal{#1}}
\newcommand\term[2][\empty]{\myemph[#1]{#2}}
\newcommand\Aman{A}
\newcommand\Bman{B}
\newcommand\Cman{C}
\newcommand\Kman{K}
\newcommand\Lman{L}
\newcommand\Mman{M}
\newcommand\Nman{N}
\newcommand\Pman{P}
\newcommand\Uman{U}
\newcommand\Wman{W}
\newcommand\Yman{Y}
\newcommand\bN{\mathbb{N}}
\newcommand\bR{\mathbb{R}}
\newcommand\bZ{\mathbb{Z}}
\newcommand\id{\mathrm{id}}          % identity map    
\newcommand\Int{\mathrm{Int}}        % interior 
\newcommand\supp{\mathrm{supp\,}}    % support
\newcommand\eps{\varepsilon}                   % epsilon
\newcommand\restr[2]{#1\vert_{#2}}
\newcommand\GL{\mathrm{GL}}
\newcommand\Diff{\mathcal{D}}       % diffeomorphisms
\newcommand\Homeo{\mathcal{H}}      % homeomorphisms
\newcommand\Iso{\mathrm{Iso}}       % isomorphism
\newcommand\Cr[1]{\mathcal{C}^{#1}}
\newcommand\VBAut[2][\empty]{\GL(#2\ifx\empty #1\relax\else,#1\fi)}
\newcommand\DiffLP{\Diff}  % leaf preserving diffeomorphisms
\newcommand\DiffLPInv[3][\empty]{\DiffLP_{inv}(#2,#3\ifx\empty#1\relax\else,#1\fi)}
\newcommand\DiffLPFix[3][\empty]{\DiffLP_{fix}(#2,#3\ifx\empty#1\relax\else,#1\fi)}
\newcommand\DiffLPNb[3][\empty]{\DiffLP_{nb}(#2,#3\ifx\empty#1\relax\else,#1\fi)}
\newcommand\adif{a}
\newcommand\bdif{b}
\newcommand\cdif{c}
\newcommand\ddif{d}
\newcommand\hadif{\hat{\adif}}
\newcommand\hbdif{\hat{\bdif}}
\newcommand\hcdif{\hat{\cdif}}
\newcommand\dif{h}
\newcommand\gdif{g}
\newcommand\kdif{k}
\newcommand\px{x}
\newcommand\acolr{black}
\newcommand\bcolr{black}
\newcommand\ccolr{black}
\newcommand\ucolr{black}
\newcommand\vcolr{black}
\newcommand\wcolr{black}
\newcommand\putcolor[2][\empty]{\ifx\empty#1#2\else{\color{#1}{#2}}\fi}
\newcommand\lowerindex[1]{\ifx\empty#1\relax\else_{#1}\fi}
\newcommand\upperindex[1]{\ifx\empty#1\relax\else^{#1}\fi}
\newcommand\manifNotation[2][\empty]{\putcolor[#1]{#2}}
\newcommand\chartNotation[2][\empty]{\putcolor[#1]{\mathsf{#2}}}
\newcommand\atlasNotation[2][\empty]{\putcolor[#1]{\mathsf{#2}}}
\newcommand\indexNotation[2][\empty]{\putcolor[#1]{#2}}
\newcommand\indexSetNotation[2][\empty]{\putcolor[#1]{#2}}
\newcommand\categoryNotation[2][\empty]{\putcolor[#1]{\mathcal{#2}}}
\newcommand\indatl[2]{#1_{*}(#2)}
\newcommand\mpair[2]{#1^{(#2)}}
\newcommand\Cone[1][\empty]{\Kman\lowerindex{#1}}
\newcommand\Aspace{\manifNotation[\acolr]\Mman}
\newcommand\Bspace{\manifNotation[\bcolr]\Nman}
\newcommand\Aatlas{\atlasNotation[\acolr]{A}}
\newcommand\Batlas{\atlasNotation[\bcolr]{B}}
\newcommand\Latlas{\atlasNotation[\ccolr]{M}}
\newcommand\CanonAtlas[1]{\atlasNotation[\ccolr]{C}_{#1}}
\newcommand\Uatlas{\atlasNotation[\ucolr]{U}}
\newcommand\Vatlas{\atlasNotation[\vcolr]{V}}
\newcommand\Watlas{\atlasNotation[\wcolr]{W}}
\newcommand\AutCObject[2][\empty]{\mathrm{Aut}\lowerindex{\categoryNotation[\acolr]{#1}}(#2)}
\newcommand\Uchart[1][\empty]{\chartNotation[\acolr]{u}\lowerindex{#1}}
\newcommand\Vchart[1][\empty]{\chartNotation[\bcolr]{v}\lowerindex{#1}}
\newcommand\Udomain[1][\empty]{\manifNotation[\acolr]{U}\lowerindex{#1}}
\newcommand\Vdomain[1][\empty]{\manifNotation[\bcolr]{V}\lowerindex{#1}}
\newcommand\Wdomain[1][\empty]{\manifNotation[\ccolr]{W}\lowerindex{#1}}
\newcommand\hUdomain[1][\empty]{\manifNotation[\acolr]{\tilde{U}}\lowerindex{#1}}
\newcommand\hVdomain[1][\empty]{\manifNotation[\bcolr]{\tilde{V}}\lowerindex{#1}}
\newcommand\hWdomain[1][\empty]{\manifNotation[\ccolr]{\tilde{W}}\lowerindex{#1}}
\newcommand\Ucmap[1][\empty]{\chartNotation[\acolr]{\phi}\lowerindex{#1}}
\newcommand\Vcmap[1][\empty]{\chartNotation[\bcolr]{\psi}\lowerindex{#1}}
\newcommand\hUcmap[1][\empty]{\chartNotation[\acolr]{\phi_{\#}}\lowerindex{#1}}
\newcommand\hVcmap[1][\empty]{\chartNotation[\bcolr]{\psi_{\#}}\lowerindex{#1}}
\newcommand\trmap[1]{\gdif_{#1}}
\newcommand\Atrmap{\trmap{\Aatlas}}
\newcommand\Btrmap{\trmap{\Batlas}}
\newcommand\uind{\indexNotation[\acolr]{i}}
\newcommand\vind{\indexNotation[\bcolr]{j}}
\newcommand\Uindset{\indexSetNotation[\acolr]{I}}
\newcommand\canUcmap[1][\empty]{\chartNotation[\acolr]{\Phi}\lowerindex{#1}}
\newcommand\canVcmap[1][\empty]{\chartNotation[\bcolr]{\Psi}\lowerindex{#1}}
\newcommand\wuinc{\putcolor[\ucolr]{i}}
\newcommand\wvinc{\putcolor[\vcolr]{j}}
\newcommand\hwuinc{\putcolor[\ucolr]{\tilde{i}}}
\newcommand\hwvinc{\putcolor[\vcolr]{\tilde{j}}}
\newcommand\AtCat[1][\empty]{\mathsf{At}\ifx\empty#1\relax\else^{#1}\fi}
\newcommand\uva[1]{\atlasNotation{Q}_{#1}}
\newcommand\XDIFF[3]{\Diff^{#1}_{#2}(#3)}
\newcommand\erestr[2]{#1_{\lfloor\!#2}}
\newcommand\HomeoLUV{\Homeo(\Aspace,\Udomain,\Vdomain)}
\newcommand\HomeoLPartUV{\Homeo(\Aspace,\{\Udomain,\Vdomain\})}
\newcommand\DiffUW{\XDIFF{}{}{\Udomain,\Wdomain}}
\newcommand\DiffVW{\XDIFF{}{}{\Vdomain,\Wdomain}}
\newcommand\DiffW{\XDIFF{}{}{\Wdomain}}
\newcommand\EDiffWU{\mathcal{E}_{\Udomain}(\Wdomain)}
\newcommand\EDiffWV{\mathcal{E}_{\Vdomain}(\Wdomain)}
\newcommand\DiffUVW{\XDIFF{}{}{(\Udomain,\Wdomain),(\Vdomain,\Wdomain)}}
\newcommand\DiffVUW{\XDIFF{}{}{(\Vdomain,\Wdomain),(\Udomain,\Wdomain)}}
\newcommand\aUcmap{\Ucmap}
\newcommand\aVcmap{\Vcmap}
\newcommand\haUcmap{\hUcmap}
\newcommand\haVcmap{\hVcmap}
\newcommand\xbar[1]{\tilde{#1}}
\newcommand\bUcmap{\xbar{\Ucmap}}
\newcommand\bVcmap{\xbar{\Vcmap}}
\newcommand\hbUcmap{\bUcmap_{\#}}
\newcommand\hbVcmap{\bVcmap_{\#}}
\newcommand\UVAtlas{(\Udomain,\Vdomain)}
\newcommand\Astruct{\mathfrak{A}}
\newcommand\Bstruct{\mathfrak{B}}
\newcommand\Diffr[3][\empty]{\Diff_{#1}^{#2}(#3)}
\newcommand\AtlasCompletion[1]{\overline{#1}}
\newcommand\AtlasRestr[2]{\restr{#1}{#2}}
\newcommand\kk{r}
\newcommand\Ck{\Cr{\kk}}
\newcommand\XCMAP[3]{\mathcal{C}^{#1}_{#2}(#3)}
\newcommand\DLine{\mathbb{L}}    % the line itself
\newcommand\Ylet{\mathbb{Y}}     % non-Hausdorff Y letter
\newcommand\Rzp{\bR\setminus0}
\newcommand\Rpos{\mathbb{R}_{pos}}%{(-\infty; 0)}
\newcommand\HRplz{\Homeo^{+}_{0}(\bR)}
\newcommand\DRplz{\XDIFF{+}{0}{\bR}}
\newcommand\DRpos{\XDIFF{+}{}{\Rpos}}
\newcommand\DExtRneg{\mathcal{E}\!xt(\Rpos)}
\newcommand\calE{\mathcal{E}}
\newcommand\dbl[3]{#1\setminus #2 \,/\, #3}
\newcommand\dbli[2]{#1\setminus #2^{\pm1} \,/\, #1}
\newcommand\JDiffRz[1][\kk]{\mathcal{E}^{#1}(\bR,0)}
\newcommand\JDiffRzOrPres[1][\kk]{\JDiffRz[\kk,+]}
\newcommand\JDiffRzOrRev[1][\kk]{\JDiffRz[\kk,-]}
\newcommand\AutWExtU{\mathcal{E}_{\Ccateg}(\Wman,\wuinc)}
\newcommand\AutWExtV{\mathcal{E}_{\Ccateg}(\Wman,\wvinc)}
\newcommand\udif{\alpha}
\newcommand\vdif{\beta}
\newcommand\wdif{\gamma}
\newcommand\uvdif{\eta}
\newcommand\RTwoCopies{\bR\times\{0,1\}}
\newcommand\Ccateg{\categoryNotation[\acolr]{C}}
\newcommand\Hcateg{\categoryNotation[\bcolr]{H}}
\newcommand\AutC[1]{\AutCObject[\Ccateg]{#1}}
\newcommand\AutH[1]{\AutCObject[\Hcateg]{#1}}
\newcommand\AutCU{\AutC{\Udomain}}
\newcommand\AutCW{\AutC{\Wdomain}}
\newcommand\AutHU{\AutH{\Udomain}}
\newcommand\IsomCat[3]{\Iso_{#1}(#2,#3)}
\newcommand\IsomH[2]{\IsomCat{\Hcateg}{#1}{#2}}
\newcommand\IsomC[2]{\IsomCat{\Ccateg}{#1}{#2}}
\newcommand\Cspan{\Ccateg}
\newcommand\HCatlas{\Hcateg\Ccateg}
\newcommand\spanCat[5]{#1 \xleftarrow{~#2~} #3 \xrightarrow{~#4~} #5}
\newcommand\spanRev[1]{#1^{*}}
\newcommand\MorCat[3]{\mathcal{M}_{#1}(#2,#3)}
\newcommand\MorH[2]{\MorCat{\Hcateg}{#1}{#2}}
\newcommand\ArrCat[1]{#1^{\mathbf{2}}}
\newcommand\ArrCcateg{\ArrCat{\Ccateg}}
\newcommand\ArrHcateg{\ArrCat{\Hcateg}}
\title{Differentiable structures on a union of two open sets}
\author{Mykola Lysynskyi}
\address{Department of Algebra and Topology, Institute of Mathematics of NAS of Ukraine, Tereshchenkivska str. 3, Kyiv, 01601, Ukraine}
\email{m.lysynskyi@imath.kiev.ua}
\author{Sergiy Maksymenko}
\address{Department of Algebra and Topology, Institute of Mathematics of NAS of Ukraine, Tereshchenkivska str. 3, Kyiv, 01601, Ukraine}
\email{maks@imath.kiev.ua}
\keywords{Diffeomorphism, smooth structure, $1$-manifold, non-Hausdorff space, line with two origins, double cosets, span}
\subjclass[2000]{%
    58A05, % Differentiable manifolds, foundations
    57R30% Foliations in differential topology; geometric theory
}
\begin{document}

\begin{abstract}
In a recent paper the authors classified differentiable structures on the non-Hausdorff one-dimensional manifold $\mathbb{L}$ called the \emph{line with two origins} which is obtained by gluing two copies of the real line $\mathbb{R}$ via the identity homeomorphism of $\mathbb{R}\setminus 0$.

Here we give a classification of differentiable structures on another non-Hausdorff one-dimensional manifold $\mathbb{Y}$ (called \emph{letter  ``$Y$}'') obtained by gluing two copies of $\mathbb{R}$ via the identity map of positive reals.
It turns out that, in contrast to the real line, for every $r=1,\ldots,\infty$, both manifolds $\DLine$ and $\mathbb{Y}$ admit uncountably many pair-wise non-diffeomorphic $\mathcal{C}^{k}$-structures.

We also observe that the proofs of these classifications are very similar.
This allows to formalize the arguments and extend them to a certain general statement about arrows in arbitrary categories.
\end{abstract}

\maketitle

% !TeX encoding = UTF-8
% !TeX spellcheck = en_US

\section{Introduction}
Classification of smooth structures on manifolds is an extremely hard problem.
It is completely solved for Hausdorff manifolds of dimensions $m\leq 3$: namely any two smooth structures on such a manifold are diffeomorphic, \cite{Munkres:PhD:1955} for $n=2$, \cite[Theorem~6.3]{Munkres:AnnMath:1968} for $m=3$, and \cite[Corollary~1.18]{Whitehead:AnnMath:1961} for $n\leq3$.
In higher dimensions there is a lot of particular statements, see e.g.\ \cite{Stallings:PCPS:1962, Freedman:JDG:1982, Donaldson:JDG:1983, Gompf:JDG:1985, Taubes:JDG:1987, FreedUhlenbeck:Inst:1991, ChernovNemirovski:CMPh:2013, Akbulut:PM:2012} and references therein.
It is also worth to mention the result by Kirby and Siebenmann~\cite[Theorem on page 155]{KirbySiebenmann:Smooth:1977} implying that on every closed topological Hausdorff manifold of dimension $\geq5$ there exists only finitely many classes of non-diffeomorphic smooth structures.

On the other hand, for non-Hausdorff manifolds the situation is different: even simplest manifolds of dimension $1$ can admit infinitely many pairwise non-diffeomorphic smooth structures, \cite{Nyikos:AM:1992}.
Also, in a recent paper by the authors~\cite{LysynskyiMaksymenko:SmoothStr:2024} it is obtained a complete classification of $\Cr{\kk}$ structures on the non-Hausdorff line $\DLine$ with two origins, obtained by gluing two copies of $\bR$ by the identity diffeomorphism of $\bR\setminus\{0\}$, see Figure~\ref{fig:LY} and Theorem~\ref{th:Ck_struct_on_L_summary} below.
Let us briefly recall that statement.

\begin{figure}[htbp!]
\centering\includegraphics[height=3.5cm]{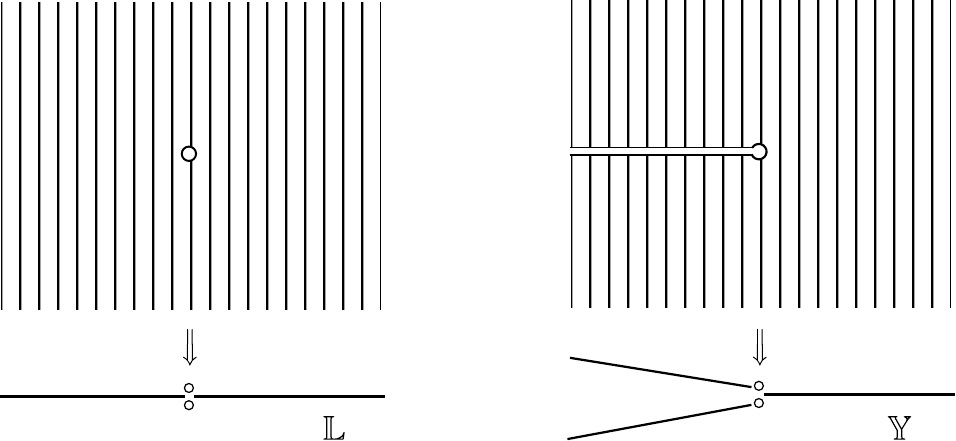}
\caption{Non-Hausdorff manifolds $\DLine$ and $\Ylet$ as leaf spaces of foliations}
\label{fig:LY}
\end{figure}

Let $\bZ_{2}=\{\pm1\}$ be the cyclic group of order $2$.
For a group $W$ let $W\wr\bZ_{2}$ be the \term{wreath product} of $W$ and $\bZ_{2}$, which can be defined as a Cartesian product $W \times W \times \bZ_{2}$ with the following operation:
\begin{align*}
   (c,d,\delta)(a,b,+1) &:= (ca,bd,\delta), &
   (c,d,\delta)(a,b,-1) &:= (da,cb,-\delta),
\end{align*}
for $a,b,c,d\in W$ and $\delta \in\bZ_2$.
Now, if $W$ is a subgroup of some other group $D$, then there is a natural left action of $W\wr\bZ_{2}$ on $D$ given by
\[
    (a,b,\delta)\cdot g := (b g a^{-1})^{\delta}.
\]
The corresponding quotient set will be denoted by $\dbli{W}{D}$.
Evidently, the orbit of an element $g\in D$ coincides with the set $W g W \cup W g^{-1} W$.
It will be called the \term{$(W,\pm)$-double coset} of $g$, or even $\pm$-double cosets, if the subgroup $W$ is known from the context.

Note also that $W\times W \times 1$ is a subgroup of $W\wr\bZ_{2}$ and thus it also acts on $D$.
The quotient set of that action is usually denoted by $\dbl{W}{D}{W}$ and called \term{$W$-double cosets}.
Evidently, in this case the orbit of $g\in D$ is $W g W$.

\begin{theorem}[\cite{LysynskyiMaksymenko:SmoothStr:2024}]
\label{th:Ck_struct_on_L_summary}
Let $\kk\in\{1,\ldots,\infty\}$, $\mathcal{D}:=\XDIFF{+,\kk}{}{\bR\setminus0}$ be the group of orientation-preserving $\Ck$-diffeomorphisms of $\bR\setminus0$, and $\mathcal{W}$ be its subgroup consisting of diffeomorphisms which can be extended to a $\Ck$-diffeomorphism of $\bR$.
Then there is a canonical bijection between
\begin{itemize}
\item
isomorphism classes of $\Cr{\kk}$ structures on $\DLine$ and
\item
$(\mathcal{W},\pm)$-double coset classes $\dbli{\mathcal{W}}{\mathcal{D}}$.
\end{itemize}
In particular, it follows that $\DLine$ admits uncountably many pair-wise non-diffeomorphic $\Ck$-structures.
\end{theorem}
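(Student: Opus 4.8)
My plan is to encode a $\Ck$-structure on $\DLine$ by the single transition diffeomorphism of its two tautological charts, and then to compute the equivalence relation that isomorphisms impose on that transition. Write $\DLine=U_0\cup U_1$ with each $U_i\cong\bR$ and $U_0\cap U_1\cong\bR\setminus0$. Since every topological $1$-manifold homeomorphic to $\bR$ carries a unique $\Ck$-structure up to $\Ck$-diffeomorphism, any $\Ck$-structure $\mathfrak S$ on $\DLine$ admits a two-chart atlas $\{(U_0,\varphi_0),(U_1,\varphi_1)\}$ with each $\varphi_i\colon U_i\to\bR$ a $\Ck$-chart for $\mathfrak S$, normalized so that $\varphi_i$ carries the $i$-th origin to $0$. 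The non-Hausdorff geometry is essential here: a point of the overlap approaching one origin also approaches the other, which forces $\lim_{x\to0^{\pm}}\varphi_1\varphi_0^{-1}(x)=0$. Hence the transition $g_{\mathfrak S}:=\varphi_1\varphi_0^{-1}$ is a $\Ck$-diffeomorphism of $\bR\setminus0$ fixing the two ends at $0$, and after fixing an orientation it lies in $\mathcal D$. I would record these reductions first.

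\textbf{Chart changes give the double coset.} Passing to another $\Ck$-atlas of the \emph{same} structure replaces $\varphi_i$ by $\tilde a_i\circ\varphi_i$ with $\tilde a_i\in\Diff^{\kk}(\bR,0)$, i.e.\ by a germ that is $\Ck$ across $0$; the restriction of such $\tilde a_i$ to $\bR\setminus0$ is exactly an element of $\mathcal W$. Thus $g_{\mathfrak S}$ changes to $b\,g_{\mathfrak S}\,a^{-1}$ with $a,b\in\mathcal W$, so the class $\mathcal W g_{\mathfrak S}\mathcal W$ is a well-defined invariant (for the chosen orientation and labeling of branches). Conversely, building the two-chart atlas from a prescribed $g\in\mathcal D$ realizes every class; and if $g'=b g a^{-1}$ with $a,b\in\mathcal W$, gluing the maps $\varphi_0^{-1}\tilde a^{-1}\varphi_0'$ and $\varphi_1^{-1}\tilde b^{-1}\varphi_1'$ along the overlap (they agree there precisely because $g'a=bg$) produces a $\Ck$-diffeomorphism between the two structures. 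So $\mathcal W$-double cosets classify structures up to orientation and labeling.

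\textbf{Residual symmetries and the $\pm$ (the hard part).} It remains to quotient by the homeomorphisms of $\DLine$ not seen by chart changes. Any homeomorphism permutes the two non-Hausdorff points, so the relevant symmetry group is generated by the branch swap $\tau$ (exchanging the origins, the identity on the overlap) and the orientation reversal $\nu\colon x\mapsto-x$. A direct computation gives $\tau\colon g\mapsto g^{-1}$ and $\nu\colon g\mapsto\nu g\nu$; but reversing orientation can be undone by an orientation-reversing change of \emph{both} charts, so on the level of $\pm$-double cosets it contributes nothing beyond the inversion already supplied by $\tau$. Reorganizing the orientation-reversing chart changes together with $\tau$ into a single involution, the group acting on $g$ becomes exactly $\mathcal W\wr\bZ_2$ through $(a,b,\delta)\cdot g=(bga^{-1})^{\delta}$, whose orbits are the $(\mathcal W,\pm)$-double cosets $\dbli{\mathcal{W}}{\mathcal{D}}$. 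The technical heart of the whole argument is this last identification: one must verify that the two a priori independent involutions (branch swap and orientation reversal) collapse to the single $\delta$ of the wreath product, which requires a careful analysis of exactly which diffeomorphisms of $\bR\setminus0$ extend $\Ck$-smoothly across $0$ and of how that extension property behaves under swapping the two rays. I expect all of the real work to sit here, rather than in the existence of the map.

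\textbf{Uncountability.} Granting the bijection, it suffices to produce uncountably many $\pm$-double cosets. For $\lambda>0$ let $g_\lambda\in\mathcal D$ be given by $g_\lambda(x)=x$ on $(0,\infty)$ and $g_\lambda(x)=\lambda x$ on $(-\infty,0)$, so $g_\lambda$ is $\Ck$ on $\bR\setminus0$, fixes $0$ from each side, and has one-sided derivatives $g_\lambda'(0^{+})=1$, $g_\lambda'(0^{-})=\lambda$. Every $w\in\mathcal W$ extends $\Ck$ across $0$ and therefore has \emph{equal} one-sided derivatives at $0$; by the chain rule the ratio $g'(0^{+})/g'(0^{-})$ is unchanged under left and right multiplication by $\mathcal W$, while $g\mapsto g^{-1}$ (and likewise $g\mapsto\nu g\nu$) inverts it. Hence the unordered pair $\{\lambda,\lambda^{-1}\}$ is a genuine invariant of the $\pm$-double coset. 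As $\lambda$ runs over $(0,1]$ these pairs are pairwise distinct, so the $g_\lambda$ represent uncountably many classes, and the bijection yields uncountably many pairwise non-diffeomorphic $\Ck$-structures on $\DLine$.
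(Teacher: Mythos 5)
Your overall route is the same as the paper's: reduce to minimal two\--chart atlases via uniqueness of $\Ck$-structures on $\bR$, take the transition map as the invariant, let chart changes generate $\mathcal{W}$-double cosets and the residual symmetries generate the $\pm$, and produce an explicit uncountable family (compare Lemmas~\ref{lm:min_atlas_on_Y}, \ref{lm:diffs_of_Y}, Theorems~\ref{th:CkLUV_HLUV}, \ref{th:act_DUWZ2_DW} and Section~\ref{sect:diff_struct_on_L}; your last paragraph parallels Lemma~\ref{lm:x_pow_alpha} and is correct). The gap is exactly at the step you call ``the technical heart'' and then merely assert: that orientation reversal ``contributes nothing beyond the inversion already supplied by $\tau$''. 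This is not just unproven, it is false as you have set things up. If $\{\varphi_0,\varphi_1\}$ is a normalized atlas of a structure with transition $g$, then $\{\nu\varphi_0,\nu\varphi_1\}$ (with $\nu(x)=-x$) is again a normalized atlas of the \emph{same} structure, with transition $\nu g\nu$; so your invariant is well defined only if $\mathcal{W}(\nu g\nu)\mathcal{W}\subseteq\mathcal{W}g\mathcal{W}\cup\mathcal{W}g^{-1}\mathcal{W}$ for all $g\in\mathcal{D}$. Take $g(x)=x^{s}$ for $x>0$ and $g(x)=x$ for $x<0$, with $s\neq1$. Then $\nu g\nu$ is the identity on the positive ray, and any identity $\nu g\nu=(b\circ g\circ a^{-1})^{\pm1}$ with $a,b\in\mathcal{W}$, restricted to the positive ray, reads $x^{\pm s}=b_{+}^{-1}\circ a_{+}$ (subscript $+$ denoting restriction to $(0;+\infty)$). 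Since $a_{+},b_{+}$ are restrictions of $\Ck$-diffeomorphisms of $\bR$ fixing $0$, the right-hand side extends to a $\Ck$-diffeomorphism germ at $0$, while $x^{\pm s}$ does not. Hence $g$ and $\nu g\nu$ lie in \emph{different} $(\mathcal{W},\pm)$-double cosets, although they define diffeomorphic (indeed, equal up to compatible atlas change) structures: the homeomorphism of $\DLine$ induced by $x\mapsto-x$ on both copies of $\bR$ is a $\Ck$-diffeomorphism between the structures with transitions $g$ and $\nu g\nu$, both of its coordinate representations being $\nu$. So with the orientation-preserving group $\mathcal{W}$ the two involutions genuinely do not collapse, and your correspondence is neither well defined on structures nor injective on diffeomorphism classes.

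The repair, which is how the paper proceeds, is never to impose the orientation normalization: in Theorem~\ref{th:act_DUWZ2_DW} the acting group is $\EDiffWU$, \emph{all} diffeomorphisms of the overlap extendable over a chart, which for $\DLine$ is $\mathcal{W}\cup\nu\mathcal{W}$, so that conjugation by $\nu$ is built into the action from the start. This is also precisely where $\DLine$ is harder than $\Ylet$: for $\Ylet$ the charts carry the overlap onto $\Rpos$, which forces every chart change and every representation to be orientation-preserving, and that is why the proof of Theorem~\ref{th:Ck_str_on_Y} needs no collapse step. (Correspondingly, the literal wording of the statement, with orientation-preserving $\mathcal{D}$ and $\mathcal{W}$, has to be read with the extendability groups taken without the orientation restriction; note that the identification $\canUcmap_{*}(\EDiffWU)=\mathcal{W}$ asserted in Section~\ref{sect:diff_struct_on_L} silently discards the orientation-reversing extendable maps.) Your uncountability argument is robust under either formulation, since the unordered pair of one-sided derivative ratios is invariant under $g\mapsto\nu g\nu$ as well; but as a proof of the classification statement, the proposal fails at its declared crux.
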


The aim of the present paper is to show that the technique of~\cite{LysynskyiMaksymenko:SmoothStr:2024} can be seen as a certain formal and rather general statement about arrows in arbitrary categories.
In particular, using similar arguments we will give a classification of smooth structures on another example of a non-Hausdorff manifold, the \term{letter $\Ylet$}, obtained by gluing two copies of $\bR$ by the identity homeomorphism of the set $\Rpos:=(0;+\infty)$ of positive reals, see Figure~\ref{fig:LY}.
Existence of smooth structures on $\Ylet$ is shown in~\cite{HaefligerReeb:EM:1957}.
The following statement classifies such structures and summarizes results obtained in Section~\ref{sect:diff_struct_on_Y}, see e.g.\ Theorem~\ref{th:Ck_str_on_Y}:
\begin{theorem}\label{th:Ck_struct_on_Y_summary}
Let $\kk\in\{1,\ldots,\infty\}$, $\mathcal{D}:=\DRpos$ be the group of orientation-preserving $\Ck$-diffeomorphisms of $\Rpos$, and $\mathcal{W}$ be its subgroup consisting of diffeomorphisms which can be extended to a $\Ck$-diffeomorphism of $\bR$.
There is a canonical bijection between
\begin{itemize}
\item
isomorphism classes of $\Cr{\kk}$ structures on $\Ylet$ and
\item
$(\mathcal{W},\pm)$-double coset classes $\dbli{\mathcal{W}}{\mathcal{D}}$.
\end{itemize}
In particular, it follows that $\Ylet$ admits uncountably many pair-wise non-diffeomorphic $\Ck$-structures.
\end{theorem}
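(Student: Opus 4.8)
The plan is to encode each $\Cr{\kk}$ structure on $\Ylet$ by a single transition map and to track how that map changes under the two sources of ambiguity: the choice of coordinate charts and the self-homeomorphisms of $\Ylet$. Write $\Ylet=\Na\cup\Nb$ for the canonical open cover by the two glued copies of $\bR$, so that $\Na,\Nb$ are each homeomorphic to $\bR$, their overlap is $\Na\cap\Nb=\Rpos$, and the two non-separated origins $\pta\in\Na$, $\ptb\in\Nb$ are the only obstruction to Hausdorffness. Given a $\Cr{\kk}$ structure $\mathfrak{S}$, its restriction to each of $\Na,\Nb$ is a $\Cr{\kk}$ structure on a space homeomorphic to $\bR$, hence standard; so I may choose charts $\phi\colon\Na\to\bR$ and $\psi\colon\Nb\to\bR$ that are $\Cr{\kk}$-diffeomorphisms onto the standard line, normalized by $\phi(\pta)=\psi(\ptb)=0$ and $\phi(\Rpos)=\psi(\Rpos)=\Rpos$. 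The associated \emph{transition map} $\theta:=\psi\circ\phi^{-1}\colon\Rpos\to\Rpos$ then lies in $\mathcal{D}=\DRpos$. Conversely, any $\theta\in\mathcal{D}$ is realized by regluing $\bR\sqcup\bR$ along $\Rpos$ using $\theta$ as transition, so $\mathfrak{S}\mapsto\theta$ is surjective onto $\mathcal{D}$.

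Next I would establish the topological rigidity of $\Ylet$ that pins the target group to $\mathcal{D}$ and produces the ``$\pm$''. Deleting $\{\pta,\ptb\}$ splits $\Ylet$ into three open rays, and the stem $\Rpos$ is distinguished as the unique ray whose closure contains \emph{both} non-separated points, whereas each negative ray meets only one of $\pta,\ptb$. Hence every self-homeomorphism of $\Ylet$ preserves $\Rpos$ setwise, fixes its junction end $0^{+}$ and its free end $+\infty$ — so it is orientation-preserving on the stem, which explains the superscript ``$+$'' in $\mathcal{D}$ — and either preserves or interchanges the two branches. Interchanging the branches swaps the roles of $\phi$ and $\psi$ and therefore replaces $\theta$ by $\theta^{-1}$, the source of the involution ``$\pm$''.

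It then remains to identify the chart-change freedom. Two admissible choices of $\phi$ differ by $g\circ\phi$ with $g\in\Diff^{+}(\bR,0)$ an orientation-preserving $\Cr{\kk}$-diffeomorphism of $\bR$ fixing $0$; its restriction $g|_{\Rpos}$ is precisely a diffeomorphism of $\Rpos$ that extends to $\bR$, i.e.\ an arbitrary element of $\mathcal{W}$. Thus changing $\phi$ multiplies $\theta$ on the right by $\mathcal{W}$, changing $\psi$ multiplies it on the left by $\mathcal{W}$, and the branch swap inverts it; combining these, the class of $\theta$ in $\dbli{\mathcal{W}}{\mathcal{D}}$ is a well-defined invariant of $\mathfrak{S}$. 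Conversely, each relation $\theta_2=w_1\theta_1^{\pm1}w_2$ with $w_1,w_2\in\mathcal{W}$ is realized by an honest diffeomorphism $(\Ylet,\mathfrak{S}_1)\to(\Ylet,\mathfrak{S}_2)$ assembled from the $\bR$-extensions of $w_1,w_2$ together with, if needed, a branch-swapping homeomorphism, so the invariant is complete. This yields the asserted bijection; uncountability of $\dbli{\mathcal{W}}{\mathcal{D}}$ follows, as in the $\DLine$ case of Theorem~\ref{th:Ck_struct_on_L_summary}, by exhibiting an uncountable family of elements of $\mathcal{D}$ detected by an invariant of the germ at the junction end $0^{+}$ that is insensitive to multiplication by $\mathcal{W}$ and to inversion.

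The main obstacle is the equivalence between diffeomorphism of structures and equality of double cosets — concretely, the sub-lemma that any self-diffeomorphism of $(\Ylet,\mathfrak{S})$, read through the normalized charts, restricts on the overlap $\Rpos$ to an element of $\mathcal{W}$ rather than of all of $\mathcal{D}$. The essential point is that such a map is defined on the whole branch-plus-origin, so its restriction to $\Rpos$ must extend $\Cr{\kk}$-smoothly across $0$; this is exactly the defining property of $\mathcal{W}$, and it is where the strict inclusion $\mathcal{W}\subsetneq\mathcal{D}$ enters. Since every step mirrors the proof of Theorem~\ref{th:Ck_struct_on_L_summary} — only the overlap changes from $\bR\setminus0$ to $\Rpos$ and the groups change accordingly — I would, in line with the stated aim of the paper, phrase the whole argument as the general span/double-coset statement and then specialize it to the inclusions $\Rpos\hookrightarrow\bR$ that define $\Ylet$.
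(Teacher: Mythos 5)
Your proposal is correct and follows essentially the same route as the paper: normalize the two charts using uniqueness of $\Ck$-structures on $\bR$ (the paper's Lemma~\ref{lm:min_atlas_on_Y}), realize every $\theta\in\mathcal{D}$ by regluing (Lemma~\ref{lm:min_atlas_realization_on_Y}), observe that every homeomorphism of $\Ylet$ preserves or swaps the two branches and is orientation-preserving on the stem, and read any diffeomorphism through the charts to get coordinate representations in $\Diff^{+}_{0}(\bR)$ whose restrictions to $\Rpos$ lie exactly in $\mathcal{W}$ — this is precisely the content of the paper's key Lemma~\ref{lm:diffs_of_Y} and Theorem~\ref{th:Ck_str_on_Y}. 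The only step you leave schematic is the uncountability, which the paper settles concretely with the family $g_{s}(x)=x^{s}$, $s>0$, showing via Hadamard's lemma (Lemma~\ref{lm:x_pow_alpha}) that distinct $s$ (up to $s\mapsto 1/s$) give distinct $(\mathcal{W},\pm)$-double cosets.
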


\begin{subremark}\rm
The proof of Theorem~\ref{th:Ck_struct_on_Y_summary} will be given in Section~\ref{sect:diff_struct_on_L}, see Theorem~\ref{th:Ck_str_on_Y} and Corollary~\ref{cor:nondif_ck_struct_on_Y}.
It is similar to the proof of Theorem~\ref{th:Ck_struct_on_L_summary} and therefore for the convenience of the reader let us briefly review their common arguments.

It is well known and is easy to see that for every connected Hausdorff manifold $\Aspace$, its group of homeomorphisms $\Homeo(\Aspace)$ transitively acts on the interior $\Int{\Aspace}$, so $\Int{\Aspace}$ is a unique proper subset invariant under $\Homeo(\Aspace)$.
On the other hand, if $\Aspace$ is not Hausdorff, then the set of points (called \term{branch points}) $\Sigma_{\Aspace}$ in which $\Aspace$ ``looses'' Hausdorff property is also invariant under $\Homeo(\Aspace)$.

For instance, let $\Aspace$ be either $\DLine$ or $\Ylet$.
Then $\Aspace$ is glued with two copies of $\bR$ which can be regarded then as open subsets $\Udomain$ and $\Vdomain$, see Figures~\ref{fig:y_letter} and~\ref{fig:l_line} below.
Moreover, $\Aspace$ has exactly two branch points and $\Udomain$ and $\Vdomain$ are open neighborhoods of those points.
One easily checks that every homeomorphism of $\Aspace$ leaves those subsets invariant or interchange them.

Note that then any particular pair of homeomorphisms $\Ucmap\colon\Udomain\to\bR$ and $\Vcmap\colon\Vdomain\to\bR$ such that the corresponding ``transition map'' $\Vcmap\circ\Ucmap\colon\Ucmap(\Udomain\cap\Vdomain)\to\Vcmap(\Udomain\cap\Vdomain)$ is a $\Ck$-diffeomorphism, can be regarded as a $\Ck$-atlas on $\Aspace$.
Now given two such atlases $\Aatlas = (\aUcmap,\aVcmap)$ and $\Batlas = (\bUcmap,\bVcmap)$ on $\Aspace$, every homeomorphism $\dif\colon\Aspace\to\Bspace$ can be written down in terms of the corresponding charts.
This gives a certain (equivalence) relation between the transition maps $\bVcmap\circ\bUcmap^{-1}$ and $\aVcmap\circ\aUcmap^{-1}$ and coordinate representations of $\dif$ in those charts.
In particular, $\dif$ is a diffeomorphism iff those transition maps belong to the same $\pm$-double coset $\dbli{\mathcal{W}}{\mathcal{D}}$, where $\mathcal{D}$ and $\mathcal{W}$ are the respective groups from Theorems~\ref{th:Ck_struct_on_L_summary} and~\ref{th:Ck_struct_on_Y_summary}.
These theorems thus conclude that $\dbli{\mathcal{W}}{\mathcal{D}}$ classifies $\Ck$-structures on $\Aspace$.
\end{subremark}

The previous observation suggests considering a more general situation when a manifold $\Aspace$ is glued from two differentiable manifolds $\Udomain$ and $\Vdomain$ by some diffeomorphism between their open subsets.
Then one can classify in a similar way differentiable structures on $\Aspace$ up to a diffeomorphisms leaving $\Udomain$ and $\Vdomain$ invariant or interchanging them (if this is possible).
This will be done in Theorems~\ref{th:CkLUV_HLUV} and~\ref{th:act_DUWZ2_DW}.
In particular, in Section~\ref{sect:diff_struct_on_L}, we deduce from them Theorem~\ref{th:Ck_struct_on_L_summary}.
Moreover, we also indicate that the developed technique was implicitly presented in the proof of uniqueness of differentiable structures on the real line and the circle, see Theorem~\ref{th:uniq_ck_struct_on_R} and Lemma~\ref{lm:diff_to_canon_struct}.

\begin{subremark}\rm
The usual scheme of classifying differentiable structures is based on the result of J.H.C.~Whitehead, \cite[Theorem~7]{Whitehead:AnnM:1940}, that every differentiable manifold admits a triangulation being ``smooth'' over each closed simplex, and different variants of approximations theorems claiming that every homeomorphism of a triangulated compact manifold can be uniformly approximated by PL-homeomorphism with respect to some subdivision, e.g.\ \cite{Cairns:AnnM:1936, Moise_V:AnnM:1952, Connell:AnnM:1963}.
Then the problem of whether two homeomorphic differentiable manifolds are diffeomorphic reduces to ``smoothing'' PL-homeomorphisms.

In Section~\ref{sect:atlases_with_two_charts} we thus consider an open cover of a manifold $\Aspace$ consisting of two subsets $\Udomain$ and $\Vdomain$, so the \term{nerve} of this cover is just a $1$-dimensional simplex $[0;1]$.
In spite that we ``mimic'' such a very simple triangulation, this allows to discover a formalism of $\pm$-double cosets which was not presented in the known results on the classification of differentiable structures.
\end{subremark}

In the last Section~\ref{sect:categorical_view} we further extend the previous results to the context of categories.
Assume that we have a category $\Hcateg$ and its subcategory $\Ccateg$.
(Think of them as of the category of homeomorphisms of manifolds and its subcategory consisting of $\Ck$-diffeomorphisms).
Given a diagram $\Aspace\colon\spanCat{\Udomain}{\wuinc}{\Wdomain}{\wvinc}{\Vdomain}$ in $\Ccateg$, called \term{$\Ccateg$-span}, we define a notion of an \term{$\HCatlas$-atlas} on $\Aspace$, see Definition~\ref{def:HC_atlas}, and classify such atlases up to an isomorphism (Theorems~\ref{th:CkLUV_HLUV:categories} and~\ref{lm:act_DUWZ2_DW_categories}).
As an illustration we will give a characterization of double cosets in terms of those atlases, see Corollary~\ref{cor:char_double_cosets}.

\begin{subremark}\rm
One-dimensional, especially non-Hausdorff, manifolds often appear as spaces of leaves of foliations on surfaces, see e.g.~\cite{HaefligerReeb:EM:1957, Gauld:NonMetrManif:2014} and Figure~\ref{fig:LY}.
For instance, let $\mathcal{F} =\{x = c \mid c\in\bR \}$ be the foliation on $\bR^2$ by vertical lines.
Given an open subset $\Udomain\subset\bR^2$, let $\mathcal{F}_{\Udomain}$ be the induced foliation on $\Udomain$ whose elements are \term{connected components} of the intersections of $\Udomain$ with the leaves of $\mathcal{F}$.
Let also $\Udomain'$ be the corresponding space of leaves, and $p\colon\Udomain\to\Udomain'$ be the corresponding quotient map.
Endow $\Udomain'$ with the respective quotient topology, so a subset $A\subset\Udomain'$ is open iff $p^{-1}(A)$ is open in $\Udomain$.
One easily checks that $\Udomain'$ is a one-dimensional manifold, however it may loose Hausdorff property.

For example, if $\Lman = \bR^2 \setminus (0,0)$ and $\Yman= \bR^2 \setminus \bigl( (-\infty;0]\times 0 \bigr)$, then the above manifolds $\DLine$ and $\Ylet$ are the respective spaces of leaves of the induced foliations $\mathcal{F}_{\Lman}$ and $\mathcal{F}_{\Yman}$.
\end{subremark}

\section{Differentiable structures on manifolds}\label{sect:diff_structs_on_manif}
In this section we will briefly recall standard definitions related with differentiable structures on manifolds.
The reader can skip this section and use it for references.

\subsection{Restriction map}
\label{sect:restriction_map}
%%%%%%%%%%%%%%%%%%%%%%%%%%%%%%%%%%%%%%%%%%%%%%%%%%
Let $\Udomain$ be a set, $\Wdomain$ be a subset of $\Udomain$, and $\wuinc\colon\Wdomain\subset\Udomain$ be the inclusion map.
If $\dif\colon\Udomain\to\Vdomain$ is a map into another set, then the composition $\dif\circ\wuinc\colon\Wdomain\to\Vdomain$ is called the \term{restriction to $\Wdomain$ map} and denoted by $\restr{\dif}{\Wdomain}$.

In what follows, we will consider maps $\dif\colon\Udomain\to\Udomain$ such that $\dif(\Wdomain)\subset\Wdomain$.
Note that $\dif$ yields then a unique map $\erestr{\dif}{\Wdomain}\colon\Wdomain\to\Wdomain$ given by $\erestr{\dif}{\Wdomain}(\px)=\dif(\px)$ for $\px\in\Wdomain$.
Though it is natural to still regard $\erestr{\dif}{\Wdomain}$ as a restriction of $\dif$ to $\Wdomain$, it is not the same as $\restr{\dif}{\Wdomain}$, since formally they have distinct target spaces.
In fact, they are related by the identity $\restr{\dif}{\Wdomain}:=\dif\circ\wuinc = \wuinc \circ \erestr{\dif}{\Wdomain}$, which means commutativity of the following diagram:
\[
    \xymatrix@C=6em{
        \Wdomain \ar[d]_-{\erestr{\dif}{\Wdomain}} \ar@{^(->}[r]^-{\wuinc} \ar[rd]^-{\restr{\dif}{\Wdomain}}&
        \Udomain \ar[d]^-{\dif} \\
        \Wdomain \ar@{^(->}[r]^-{\wuinc} &
        \Udomain
    }
\]
Of course, the difference between $\restr{\dif}{\Wdomain}$ and $\erestr{\dif}{\Wdomain}$ is rather trivial and is usually neglected, however it is visible from the above diagram and will become essential in Section~\ref{sect:categorical_view} when we extend our results to general categories.

\subsection{Manifolds}
\label{sect:manifolds}
A \term{cone} in a finite-dimensional linear space $\Pman$ is a subset of the form
\[\Cone := \{ \px\in\Pman \mid f_i(\px)\geq0, i=1,\ldots,k\}\]
where $f_1,\ldots,f_k\colon\Pman\to\bR$ is any finite collection of linear functions and such that its \term{interior},
\[
    \Int{\Cone} := \{ \px\in\Pman \mid f_i(\px)>0, i=1,\ldots,k\},
\]
is non-empty.
Then $\partial\Cone := \Cone\setminus\Int{\Cone}$ is called the \term{boundary} of $\Cone$.

Let $\Aspace$ be a topological space.
By a \term{chart} $\Uchart=(\Udomain,\Ucmap)$ on $\Aspace$ we will mean an open embedding (i.e.\ a homeomorphism onto open subset) $\Ucmap\colon\Udomain\to\Cone$ into some cone of some Euclidean space.
For every such pair the set $\Udomain$ is called the \term{domain}, while $\Ucmap$ is the \term{coordinate map} of $\Uchart$.
It will be formally convenient to allow charts with empty domain $\Udomain$.

A \term{partial atlas} on $\Aspace$ is an arbitrary collection $\Aatlas = \{(\Udomain[\uind],\Uchart[\uind])\}_{\uind\in\Uindset}$ of charts.
In this case the union $\supp\Aatlas = \cup_{\uind\in\Uindset}\Udomain[\uind]$ of their domains will be called the \term{support} of $\Aatlas$.
A partial atlas $\Aatlas$ on $\Aspace$ is called \term{atlas} whenever $\supp\Aatlas=\Aspace$, i.e.\ when the domains of charts of $\Aatlas$ cover $\Aspace$.

A topological space $\Aspace$ will be called%
\footnote{Spaces admitting atlases are usually called \term{locally Euclidean}, while by a \term{manifold} one usually means a locally Euclidean space being also Hausdorff and having countable base.
For brevity we will use the term \term{manifold} and do not assume in general neither Hausdorff property nor existence of countable base.}
a \term{manifold} if every point $\px\in\Aspace$ has an open neighborhood homeomorphic to some open subset of some cone in some Euclidean space.
This is equivalent to the assumption that $\Aspace$ admits an atlas.

It is easy to see that every manifold is $T_1$ (actually every locally $T_1$-space is $T_1$ itself).

Let $\Aspace$ be a manifold.
A point $\px\in\Aspace$ is an \term{interior} point, if there exists a chart $\Ucmap\colon\Udomain\to\Cman$ into some cone such that $\Ucmap(\px) \in \Int{\Cman}$.
The set of all interior points of $\Aspace$ is denoted by $\Int{\Aspace}$ and called the \term{interior} of $\Aspace$, while its complement $\partial\Aspace = \Aspace\setminus\Int{\Aspace}$ is the \term{boundary} of $\Aspace$.

\subsection{Differentiable structures}
\label{sect:diff_structs_on_manifs}
For $i=1,2$ let $\Cone[i]\subset\bR^{m_i}$ be a cone, $\Uman_i\subset\Cone[i]$ an open subset, and  $\dif\colon\Uman_1\to\Uman_2$ be a map.
Then $\dif$ is (\term{differentiable of class}) $\Ck$ if it extends to a $\Ck$ map $\dif'\colon\Uman'_1\to\Uman'_2$ defined on some open subsets $\Uman'_i\subset\bR^{n_i}$ with $\Uman_i\subset\Uman'_i$, $i=1,2$.
In that case $\dif$ is a \term{$\Ck$-diffeomorphism} if it is a homeomorphism and its inverse $\dif^{-1}$ is $\Ck$ as well.

Let $\Uchart=(\Ucmap\colon\Udomain\to\Cone')$ and $\Vchart=(\Vcmap\colon\Vdomain\to\Cone)$ be two \term{overlapping} charts on a manifold $\Aspace$, i.e.\ $\Udomain\cap\Vdomain \ne \varnothing$.
Then the following well-defined homeomorphism
\[
   \Cone' \supset \Ucmap(\Udomain\cap\Vdomain)\xrightarrow{~\Vcmap\circ\Ucmap^{-1}~}
                  \Vcmap(\Udomain\cap\Vdomain) \subset \Cone
\]
between open subsets of some cones is called the \term{transition map} from $\Uchart$ to $\Vchart$.
One can also regard $\Vcmap\circ\Ucmap^{-1}$ as an \term{open embedding} $\Vcmap\circ\Ucmap^{-1}\colon\Ucmap(\Udomain\cap\Vdomain) \to \Cone$.
The latter point of view will be very useful.

Let also $\kk\in\bN\cup\{\infty\}$.
Then $\Uchart$ and $\Vchart$ are called \term{$\Ck$-compatible} if either $\Udomain\cap\Vdomain=\varnothing$ or the transition map $\Vcmap\circ\Ucmap^{-1}$ is a $\Ck$-embedding.
The latter requirement can be rephrased by saying that $\Vcmap\circ\Ucmap^{-1}\colon\Ucmap(\Udomain\cap\Vdomain)\to\Cone$ is an \term{open $\Ck$-embedding}.

An (partial) atlas on $\Aspace$ is a \term{$\Ck$-atlas} if its charts are pair-wise $\Ck$-compatible.

\begin{subexample}\rm
1) A single chart $(\Udomain,\Ucmap)$ is a partial $\Ck$-atlas with support $\Udomain$ for all $\kk=1,\ldots,\infty$.

2) Let $\Aatlas=\{(\Udomain[\uind],\Ucmap[\uind])\}_{\uind\in\Uindset}$ be a partial $\Ck$-atlas on $\Aspace$.
Then for every open subset $\Vdomain\subset\Aspace$ the following collection of charts
\begin{equation}\label{equ:atlas_restr}
    \AtlasRestr{\Aatlas}{\Vdomain} :=
    \bigl\{
        (\Vdomain\cap\Udomain[\uind], \restr{\Ucmap[\uind]}{\Vdomain\cap\Udomain[\uind]})
        \mid
        \Vdomain\cap\Udomain[\uind] \not=\varnothing
    \bigr\}_{\uind\in\Uindset}
\end{equation}
is also a partial $\Ck$-atlas on $\Vdomain$.

Moreover, suppose that $\Vdomain \subset \supp\Aatlas$.
Then $\AtlasRestr{\Aatlas}{\Vdomain}$ is an \term{atlas} on $\Vdomain$, and in this case the corresponding $\Ck$-structure of $\AtlasRestr{\Aatlas}{\Vdomain}$ will be called the \term{induced $\Ck$-structure on $\Vdomain$}.
\end{subexample}

Two $\Ck$-atlases $\Aatlas$ and $\Batlas$ on $\Aspace$ are \term{$\Ck$-compatible} if their union $\Aatlas \cup \Batlas$ is still a $\Ck$-atlas, i.e.\ the charts from $\Aatlas$ and $\Batlas$ are pair-wise $\Ck$-compatible as well.

More generally, let $\Aatlas$ and $\Batlas$ be two partial $\Ck$-atlases on $\Aspace$ and $\Wdomain \subset \supp\Aatlas \cap \supp\Batlas$ be an open subset of the intersection on their supports.
Say that $\Aatlas$ and $\Batlas$ are \term{$\Ck$-compatible on $\Wdomain$}, if the atlases $\AtlasRestr{\Aatlas}{\Wdomain}$ and $\AtlasRestr{\Batlas}{\Wdomain}$ are $\Ck$-compatible.

One easily checks that the relation \term{``to be $\Ck$-compatible''} on the set of all \term{atlases} on $\Aspace$ is an equivalence relation\footnote{For \term{partial} atlases this is not true in general.}.
The corresponding equivalence classes of pair-wise $\Ck$-compatible atlases on $\Aspace$ are called \term{$\Ck$-structures}.

It also follows that each $\Ck$-structure $\Astruct$ on $\Aspace$ contains a unique \term{maximal} atlas being just a union of all atlases belonging to $\Astruct$.
If $\Aatlas$ is some $\Ck$-atlas on $\Aspace$, then the corresponding maximal $\Ck$ atlas containing $\Aatlas$ will be denoted by $\AtlasCompletion{\Aatlas}$.
Thus $\AtlasCompletion{\Aatlas}$ is a canonical representative of a $\Ck$-structure, and therefore by a \term{$\Ck$-structure} we will further mean a maximal $\Ck$-atlas.

A manifold $\Aspace$ with a fixed $\Ck$-atlas $\Aatlas$ is called a \term{$\Ck$-manifold}, and often be denoted by $\mpair{\Aspace}{\Aatlas}$.

\begin{subexample}\rm
Every non-empty open subset $\Udomain$ of every cone $\Cman\subset\bR^{n}$, for instance $\bR^{n}$ itself, has a \term{canonical} $\Ck$-structure (for each $\kk=1,\ldots,\infty$) defined by the atlas $\CanonAtlas{\Udomain}=\{(\Udomain,\id_{\Udomain})\}$ consisting of the single identity map $\id_{\Udomain}$.
We will also call that atlas \term{canonical}.
\end{subexample}

\subsection{Differentiable maps}
\label{differentiable_maps}
Let $\Aspace$ and $\Bspace$ be manifolds, $\dif\colon\Aspace\to\Bspace$ be a map, and $(\Udomain,\Ucmap)$ and $(\Vdomain,\Vcmap)$ be charts in $\Aspace$ and $\Bspace$ respectively such that $\dif(\Udomain)\subset\Vdomain$.
Then we get the following commutative diagram:
\[
\xymatrix@C=5em{
    \Udomain \ar[r]^-{\dif} \ar[d]_-{\Ucmap} & \Vdomain \ar[d]^-{\Vcmap} \\
    \Ucmap(\Udomain) \ar[r]^-{\Vcmap\circ\dif\circ\Ucmap^{-1}} & \Vcmap(\Vdomain)
}
\]
in which the lower arrow $\Vcmap\circ\dif\circ\Ucmap^{-1} \colon \Ucmap(\Udomain) \to \Vcmap(\Vdomain)$ between open subsets of some cones is called the \term{(coordinate) representation} of $\dif$ in the charts $\Uchart$ and $\Vchart$.

If $(\Udomain[1],\Ucmap[1])$ and $(\Vdomain[1],\Vcmap[1])$ is another pair of charts on $\Aspace$ and $\Bspace$ respectively such that $\dif(\Udomain[1])\subset\Vdomain[1]$ and $\Udomain[1]\cap\Udomain\ne\varnothing$, then
\begin{align}\label{equ:change_coord_repr}
    \restr{\Vcmap[1]\circ\dif\circ\Ucmap[1]^{-1}}
          {\Ucmap[1](\Udomain[1]\cap\Udomain)}
       =
    \restr{(\Vcmap[1]\circ\Vcmap^{-1})\circ(\Vcmap\circ\dif\circ\Ucmap)\circ(\Ucmap[1]\circ\Ucmap^{-1})^{-1}}
          {\Ucmap[1](\Udomain[1]\cap\Udomain)},
\end{align}
so on $\Ucmap[1](\Udomain[1]\cap\Udomain)$ the representation of $\dif$ changes by the composing from both sides with the corresponding transition maps.

Let $\Aspace$ and $\Bspace$ be $\Ck$-manifolds endowed with some maximal $\Ck$-atlases $\Aatlas$ and $\Batlas$ respectively, $\dif\colon\Aspace\to\Bspace$ be a map, and $\px\in\Aspace$.
Then $\dif$ is called \term{$\Ck$ at $\px$ \textup(with respect to $\Aatlas$ and $\Batlas$\textup)}, if there exist charts $(\Udomain,\aUcmap)\in\Aatlas$ and $(\Vdomain,\aVcmap)\in\Batlas$ such that $\px\in\Udomain$, $\dif(\Udomain)\subset\Vdomain$, and the corresponding coordinate representation $\aVcmap\circ\dif\circ\aUcmap^{-1} \colon \aUcmap(\Udomain) \to \aVcmap(\Vdomain)$ of $\dif$ is $\Ck$ at $\Ucmap(\px)$.
The identity~\eqref{equ:change_coord_repr} shows that this property does not depend on a particular choice of such charts.

Further $\dif\colon\Aspace\to\Bspace$ is called \term{$\Ck$ \textup(with respect to $\Aatlas$ and $\Batlas$\textup)} if it is so at each $\px\in\Aspace$.
In this case we will also use the notation $\dif\colon\mpair{\Aspace}{\Aatlas}\to\mpair{\Bspace}{\Batlas}$, and denote by
$\XCMAP{\kk}{}{\mpair{\Aspace}{\Aatlas},\mpair{\Bspace}{\Batlas}}$ the set of all such maps.

A homeomorphism $\dif\colon\Aspace\to\Bspace$ is a \term{$\Ck$-diffeomorphism (with respect to $\Aatlas$ and $\Batlas$)} or \term{between $\mpair{\Aspace}{\Aatlas}$ and $\mpair{\Bspace}{\Batlas}$}, if $\dif\in\XCMAP{\kk}{}{\mpair{\Aspace}{\Aatlas},\mpair{\Bspace}{\Batlas}}$ and $\dif^{-1}\in\XCMAP{\kk}{}{\mpair{\Bspace}{\Batlas},\mpair{\Aspace}{\Aatlas}}$.

The set of all $\Ck$-diffeomorphisms $\dif\colon\mpair{\Aspace}{\Aatlas}\to\mpair{\Bspace}{\Batlas}$ will be denoted by
$\XDIFF{\kk}{}{\mpair{\Aspace}{\Aatlas},\mpair{\Bspace}{\Batlas}}$.
In particular, if $\Aspace=\Bspace$ and $\Aatlas=\Batlas$, then
$\XDIFF{\kk}{}{\mpair{\Aspace}{\Aatlas},\mpair{\Aspace}{\Aatlas}}$ will also be abbreviated to
$\XDIFF{\kk}{}{\mpair{\Aspace}{\Aatlas}}$
or even to $\Diffr{\kk}{\Aspace}$ if the corresponding $\Ck$-structure on $\Aspace$ is assumed from the context.

A $\Ck$-map $\dif\colon\mpair{\Vdomain}{\Vatlas} \to \mpair{\Aspace}{\Aatlas}$ is a \term{$\Ck$-embedding}, if it is a topological embedding (homeomorphism onto its image $\dif(\Vdomain)$), and the tangent map $T_{\px}\dif\colon T_{\px}\Vdomain \to T_{\dif(\px)}\Aspace$ is injective for all $\px\in\Vdomain$.

A subset $\Udomain\subset\mpair{\Aspace}{\Aatlas}$ will be called a \term{$\Ck$-submanifold} if it is an image of some $\Ck$-embedding $\dif\colon\mpair{\Vdomain}{\Vatlas}\to\mpair{\Aspace}{\Aatlas}$.

Let us also mention two easy and useful statements.

\begin{sublemma}
Let $\mpair{\Aspace}{\Aatlas}$ be a $\Ck$-manifold, and $\Aspace \supset \Udomain \xrightarrow{\Ucmap} \Cone \subset \bR^{n}$ be a chart on $\Aspace$ not necessarily belonging to $\Aatlas$.
Denote $\Vdomain = \Ucmap(\Udomain)$, and let $\CanonAtlas{\Vdomain} = \{(\Vdomain,\id_{\Vdomain})\}$ be canonical atlas on $\Vdomain$.
Then the following conditions are equivalent:
\begin{enumerate}
\item the chart $(\Udomain,\Ucmap)$ is $\Ck$-compatible with $\Aatlas$;
\item the map $\Ucmap\colon\mpair{\Udomain}{\AtlasRestr{\Aatlas}{\Udomain}} \to \mpair{\Vdomain}{\CanonAtlas{\Vdomain}}$ is a $\Ck$-diffeomorphism;
\item the inverse map $\Ucmap^{-1}\colon\mpair{\Vdomain}{\CanonAtlas{\Vdomain}}\to\mpair{\Aspace}{\Aatlas}$ is an open $\Ck$-embedding.
\end{enumerate}
\end{sublemma}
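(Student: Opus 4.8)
The plan is to establish the three pairwise equivalences by first settling $(2)\Leftrightarrow(3)$, which is essentially formal, and then $(1)\Leftrightarrow(2)$ by a direct computation of coordinate representations. Throughout I use the notions of transition map, coordinate representation, induced $\Ck$-structure and $\Ck$-compatibility recalled in Sections~\ref{sect:diff_structs_on_manifs} and~\ref{differentiable_maps}, together with the elementary fact that $\Ucmap$ is a homeomorphism of $\Udomain$ onto the open subset $\Vdomain=\Ucmap(\Udomain)$ of the cone $\Cone$.

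For $(2)\Leftrightarrow(3)$: since $\Ucmap$ is a homeomorphism onto $\Vdomain$, its inverse $\Ucmap^{-1}\colon\Vdomain\to\Aspace$ is automatically a topological embedding with open image $\Udomain$. By definition the induced $\Ck$-structure on the open subset $\Udomain$ is the one carried by $\AtlasRestr{\Aatlas}{\Udomain}$, and a map into $\mpair{\Aspace}{\Aatlas}$ whose image lies in $\Udomain$ is $\Ck$ exactly when it is $\Ck$ as a map into $\mpair{\Udomain}{\AtlasRestr{\Aatlas}{\Udomain}}$, because the charts of $\AtlasRestr{\Aatlas}{\Udomain}$ are restrictions of charts of $\Aatlas$. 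Hence $\Ucmap^{-1}$ is an open $\Ck$-embedding iff it is a $\Ck$-embedding of $\mpair{\Vdomain}{\CanonAtlas{\Vdomain}}$ onto $\mpair{\Udomain}{\AtlasRestr{\Aatlas}{\Udomain}}$. As the source $\Vdomain$ and the target $\Udomain$ have equal dimension, the inverse function theorem turns the injectivity of the tangent maps into the statement that the inverse $\Ucmap$ is itself $\Ck$; thus such an open $\Ck$-embedding is the same as a $\Ck$-diffeomorphism with inverse $\Ucmap$, so $(3)$ holds iff $\Ucmap$ is a $\Ck$-diffeomorphism, i.e.\ iff $(2)$ holds.

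For $(1)\Leftrightarrow(2)$: fix a chart $(\Udomain[\uind],\Ucmap[\uind])\in\Aatlas$ with $\Udomain\cap\Udomain[\uind]\ne\varnothing$. The associated chart of $\AtlasRestr{\Aatlas}{\Udomain}$ is $(\Udomain\cap\Udomain[\uind],\restr{\Ucmap[\uind]}{\Udomain\cap\Udomain[\uind]})$, and taking $(\Vdomain,\id_{\Vdomain})$ as the single chart on the target, the coordinate representation of $\Ucmap$ in these charts is exactly $\Ucmap\circ\Ucmap[\uind]^{-1}$, the transition map from $(\Udomain[\uind],\Ucmap[\uind])$ to $(\Udomain,\Ucmap)$; symmetrically, the coordinate representation of $\Ucmap^{-1}$ is the opposite transition map $\Ucmap[\uind]\circ\Ucmap^{-1}$. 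Since the sets $\Udomain\cap\Udomain[\uind]$ cover $\Udomain$ as $\uind$ ranges over $\Uindset$, the map $\Ucmap$ is $\Ck$ at every point of $\Udomain$ iff each $\Ucmap\circ\Ucmap[\uind]^{-1}$ is $\Ck$, and likewise $\Ucmap^{-1}$ is $\Ck$ iff each $\Ucmap[\uind]\circ\Ucmap^{-1}$ is $\Ck$. Therefore $\Ucmap$ is a $\Ck$-diffeomorphism iff for every overlapping $\uind$ both transition maps are $\Ck$, i.e.\ the transition map between $(\Udomain,\Ucmap)$ and $(\Udomain[\uind],\Ucmap[\uind])$ is an open $\Ck$-embedding, which is precisely $\Ck$-compatibility of $(\Udomain,\Ucmap)$ with each chart of $\Aatlas$, that is, condition $(1)$.

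I expect the only point requiring genuine (though standard) care to be the step inside $(2)\Leftrightarrow(3)$ that identifies an abstract open $\Ck$-embedding onto $\mpair{\Udomain}{\AtlasRestr{\Aatlas}{\Udomain}}$ with a $\Ck$-diffeomorphism: this is where the inverse function theorem and the equality of the dimensions of the two cones enter. The remaining implications are a mechanical unwinding of definitions, the crucial observation being that the coordinate representation of the chart map $\Ucmap$ relative to $\AtlasRestr{\Aatlas}{\Udomain}$ and the canonical chart $\id_{\Vdomain}$ is nothing but a transition map of $\Aatlas$ with $(\Udomain,\Ucmap)$.
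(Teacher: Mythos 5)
Your proof is correct and follows essentially the same route as the paper, which simply records the key commutative diagram (the coordinate representation of $\Ucmap$ relative to the chart $(\Udomain,\Ucmap)$ and the canonical chart $(\Vdomain,\id_{\Vdomain})$ is the identity, equivalently, relative to charts of $\AtlasRestr{\Aatlas}{\Udomain}$ it is a transition map) and leaves the rest as an exercise. You have supplied exactly those omitted details, including the standard inverse-function-theorem step identifying open $\Ck$-embeddings between equidimensional targets with $\Ck$-diffeomorphisms onto their images.
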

\begin{proof}
Notice that we have the following commutative diagram
\[
\xymatrix@C=4em{
    \Udomain \ar[r]^-{\Ucmap} \ar[d]_-{\Ucmap} & \Vdomain \ar[d]^-{\id_{\Vdomain}} \\
    \Vdomain \ar[r]^-{\id_{\Vdomain}}          & \Vdomain
}
\]
showing that the coordinate representation of $\Ucmap$ with respect to the charts $(\Udomain,\Ucmap)$ and $(\Vdomain,\id_{\Vdomain})$ is the identity map $\id_{\Vdomain}$ being $\Ck$ for all $\kk=1,\ldots,\infty$.
This observation implies our lemma, and we leave the details for the reader as an exercise.
\end{proof}

\begin{sublemma}\label{lm:char_equiv_cr_structures}
Let $\Aatlas$ and $\Batlas$ be two partial $\Ck$-atlases on a manifold $\Aspace$, and
\[ \Wdomain\subset\supp\Aatlas\cap\supp\Batlas \]
be an open subset.
Then $\Aatlas$ and $\Batlas$ are $\Ck$-compatible on $\Wdomain$ if and only if the identity map $\id_{\Wdomain}$ is a $\Ck$-diffeomorphism $\mpair{\Wdomain}{\AtlasRestr{\Aatlas}{\Wdomain}}\to\mpair{\Wdomain}{\AtlasRestr{\Batlas}{\Wdomain}}$.

In particular, if $\Aatlas$ and $\Batlas$ are $\Ck$-atlases, then they define the same $\Ck$-structure on $\Aspace$ iff the identity map $\id_{\Aspace}\colon\mpair{\Aspace}{\Aatlas}\to\mpair{\Aspace}{\Batlas}$ is a $\Ck$-diffeomorphism.
\qed
\end{sublemma}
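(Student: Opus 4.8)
The plan is to translate both sides of the claimed equivalence into statements about transition maps, exploiting the observation made just before the lemma that the coordinate representation of $\id_{\Wdomain}$ in a source chart $(\Udomain,\Ucmap)\in\AtlasRestr{\Aatlas}{\Wdomain}$ and a target chart $(\Vdomain,\Vcmap)\in\AtlasRestr{\Batlas}{\Wdomain}$ is precisely the transition map $\Vcmap\circ\Ucmap^{-1}$ restricted to $\Ucmap(\Udomain\cap\Vdomain)$. Since $\Wdomain\subset\supp\Aatlas\cap\supp\Batlas$, both $\AtlasRestr{\Aatlas}{\Wdomain}$ and $\AtlasRestr{\Batlas}{\Wdomain}$ are genuine atlases on $\Wdomain$, and I interpret $\id_{\Wdomain}$ with respect to their maximal completions $\AtlasCompletion{\AtlasRestr{\Aatlas}{\Wdomain}}$ and $\AtlasCompletion{\AtlasRestr{\Batlas}{\Wdomain}}$. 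Recall also that $\id_{\Wdomain}$ is a $\Ck$-diffeomorphism precisely when it is $\Ck$ at every point (with $\AtlasRestr{\Aatlas}{\Wdomain}$ as source and $\AtlasRestr{\Batlas}{\Wdomain}$ as target) and its inverse, again $\id_{\Wdomain}$, is $\Ck$ at every point with the roles of the two atlases swapped.

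For the direction ``compatible $\Rightarrow$ diffeomorphism'', I would unwind $\Ck$-compatibility on $\Wdomain$: it says the union $\AtlasRestr{\Aatlas}{\Wdomain}\cup\AtlasRestr{\Batlas}{\Wdomain}$ is a $\Ck$-atlas, so for every source chart $(\Udomain,\Ucmap)$ and target chart $(\Vdomain,\Vcmap)$ with $\Udomain\cap\Vdomain\ne\varnothing$ the transition map $\Vcmap\circ\Ucmap^{-1}$ is an open $\Ck$-embedding. Given $\px\in\Wdomain$, I choose such charts with $\px\in\Udomain\cap\Vdomain$ and shrink the source domain to $\Udomain\cap\Vdomain$, so that the inclusion $\id_{\Wdomain}(\Udomain\cap\Vdomain)\subset\Vdomain$ required by the definition holds; the resulting coordinate representation of $\id_{\Wdomain}$ is then exactly the transition map, which is $\Ck$ at $\Ucmap(\px)$. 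Hence $\id_{\Wdomain}$ is $\Ck$ at $\px$. Because the inverse of an open $\Ck$-embedding is again an open $\Ck$-embedding, the reverse transition maps are likewise $\Ck$, so $\id_{\Wdomain}$ is $\Ck$ in the opposite direction as well, and therefore a $\Ck$-diffeomorphism.

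For ``diffeomorphism $\Rightarrow$ compatible'', I take an arbitrary overlapping pair of a source chart $(\Udomain,\Ucmap)$ and a target chart $(\Vdomain,\Vcmap)$ and show their transition map is an open $\Ck$-embedding. Fixing $\px\in\Udomain\cap\Vdomain$, the hypothesis gives that $\id_{\Wdomain}$ is $\Ck$ at $\px$, computed in some charts; by the chart-independence expressed in~\eqref{equ:change_coord_repr}, the coordinate representation $\Vcmap\circ\Ucmap^{-1}$ in the chosen charts (after shrinking $\Udomain$ to $\Udomain\cap\Vdomain$) is then $\Ck$ at $\Ucmap(\px)$. As $\px$ ranges over $\Udomain\cap\Vdomain$ this shows $\Vcmap\circ\Ucmap^{-1}$ is $\Ck$ on all of $\Ucmap(\Udomain\cap\Vdomain)$; applying the same argument to the inverse map shows $\Ucmap\circ\Vcmap^{-1}$ is $\Ck$ too. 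Since a transition map is always a homeomorphism onto an open subset of a cone, being $\Ck$ with $\Ck$ inverse makes it an open $\Ck$-embedding. As the pair was arbitrary, the restricted atlases are $\Ck$-compatible, i.e.\ $\Aatlas$ and $\Batlas$ are $\Ck$-compatible on $\Wdomain$. The ``in particular'' statement then follows by taking $\Wdomain=\Aspace$, noting that $\AtlasRestr{\Aatlas}{\Aspace}=\Aatlas$ and $\AtlasRestr{\Batlas}{\Aspace}=\Batlas$, and that two $\Ck$-atlases define the same $\Ck$-structure exactly when they are $\Ck$-compatible.

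I expect the only real subtlety to be the domain condition $\dif(\Udomain)\subset\Vdomain$ built into the definition of ``$\Ck$ at $\px$'': a chart of $\AtlasRestr{\Aatlas}{\Wdomain}$ need not be carried by $\id_{\Wdomain}$ into a single chart of $\AtlasRestr{\Batlas}{\Wdomain}$. The remedy, used in both directions above, is to shrink the source chart to the overlap $\Udomain\cap\Vdomain$; this is legitimate precisely because a restriction of an atlas chart to a smaller open set is $\Ck$-compatible with the atlas and hence lies in its maximal completion, so that pointwise $\Ck$-ness may be tested in these shrunken charts without leaving the structure. Everything else is a routine unwinding of the definitions together with the symmetry of the $\Ck$-compatibility relation.
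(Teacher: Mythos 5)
Your proof is correct; the paper offers no proof of Lemma~\ref{lm:char_equiv_cr_structures} (it is stated as immediate, with the proof left to the reader), and your argument is exactly the routine definition-unwinding the authors intend. In particular, your identification and resolution of the one real subtlety --- that the condition $\dif(\Udomain)\subset\Vdomain$ in the definition of ``$\Ck$ at a point'' forces one to shrink source charts to overlaps and to pass to the maximal completions of the restricted atlases --- is the only non-mechanical step, and you handle it correctly.
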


It will also be convenient to consider diffeomorphisms of pairs of manifolds.
Thus if $\mpair{\Aspace}{\Aatlas}$ and $\mpair{\Bspace}{\Batlas}$ are $\Ck$-manifolds, and $\Udomain\subset\Aspace$ and $\Vdomain\subset\Bspace$ are submanifolds, then we will denote by
$\XDIFF{\kk}{}{\mpair{(\Aspace,\Udomain)}{\Aatlas},\mpair{(\Bspace,\Vdomain)}{\Batlas}}$
the set of all $\Ck$-diffeomorphisms $\dif\colon\mpair{\Aspace}{\Aatlas}\to\mpair{\Bspace}{\Batlas}$ such that $\dif(\Udomain)=\Vdomain$.

\subsection{Classification of differentiable structures}
\label{sect:classification_of_diff_struct}
Let $\Aatlas=\{\Uchart[\uind] = (\Udomain[\uind],\Ucmap[\uind])\}_{\uind\in\Uindset}$ be a $\Ck$ atlas on $\Aspace$, and $\dif\colon\Aspace\to\Aspace$ be a homeomorphism.
Then one easily checks that
\begin{itemize}
\item the collection
\begin{equation}\label{equ:induced_atlas}
    \indatl{\dif}{\Aatlas} = \bigl\{
        \indatl{\dif}{\Uchart[\uind]} := (\dif(\Udomain[\uind]), \Ucmap[\uind]\circ\dif^{-1})
        \bigr\}_{\uind\in\Uindset}
\end{equation}
is still a $\Ck$-atlas on $\Aspace$;
\item
for every $\uind,\vind\in\Uindset$, the transition map $(\Ucmap[\vind]\circ\dif^{-1})\circ(\Ucmap[\uind]\circ\dif^{-1})^{-1}$ from the chart $\indatl{\dif}{\Uchart[\uind]}$ to $\indatl{\dif}{\Uchart[\vind]}$ coincides with the transition map $\Ucmap[\vind]\circ\Ucmap[\uind]^{-1}$ from $\Uchart[\uind]$ to $\Uchart[\vind]$;
\item the map $\dif\colon\mpair{\Aspace}{\Aatlas} \to \mpair{\Aspace}{\indatl{\dif}{\Aatlas}}$ is a $\Ck$-diffeomorphism.
\end{itemize}
The second property implies that if $\Batlas$ is another atlas on $\Aspace$ being compatible with $\Aatlas$, then $\indatl{\dif}{\Batlas}$ is compatible with $\indatl{\dif}{\Aatlas}$.
It also follows that $\Aatlas$ is maximal iff so is $\indatl{\dif}{\Aatlas}$, and thus we obtain a natural action of the group $\Homeo(\Aspace)$ of homeomorphisms of $\Aspace$ on the set of its $\Ck$-structures.

Then the problem of classification of $\Ck$-structures on $\Aspace$ can be formulated as follows:
\begin{itemize}
\item \term{describe the orbits of the action of $\Homeo(\Aspace)$ on the set of all maximal $\Ck$-atlases on $\Aspace$}.
\end{itemize}

It is well known that for every manifold $\Aspace$ of dimension $n\leq 3$ the above action is transitive.
This is usually formulated as \term{uniqueness of $\Ck$-structures} on such manifolds, for $n=1$ this is a classical result, see also discussion and elementary proof in~\cite{LysynskyiMaksymenko:SmoothStr:2024},
the case $n=2$ is proved in~\cite[Theorem~6.3]{Munkres:AnnMath:1968}, and the case $n=3$ in~\cite[Corollary~1.18]{Whitehead:AnnMath:1961}.

In particular, for the real line $\bR$, that uniqueness can be formulated as follows:
\begin{subtheorem}[Uniqueness of $\Ck$-structures on $\bR$]
\label{th:uniq_ck_struct_on_R}
Let $\mpair{\Aspace}{\Aatlas}$ be a $\Ck$-manifold, and $\Udomain\subset\Aspace$ be an open subset homeomorphic with $\bR$.
Let also $\CanonAtlas{\bR}=\{(\bR,\id_{\bR})\}$ be the canonical atlas on $\bR$.
Then there exists a $\Ck$-diffeomorphism
$\Ucmap\colon\mpair{\Udomain}{\AtlasRestr{\Aatlas}{\Udomain}}\to\mpair{\bR}{\CanonAtlas{\bR}}$.
In particular, $(\Udomain,\Ucmap)$ is a chart on $\Aspace$ being $\Ck$-compatible with $\Aatlas$.
\end{subtheorem}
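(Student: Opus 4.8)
The plan is to transport everything to $\bR$ and then build a single global coordinate by repeatedly merging overlapping one-dimensional charts. First I would fix a homeomorphism $\Udomain\cong\bR$ and use it to carry the induced structure $\AtlasRestr{\Aatlas}{\Udomain}$, so that the task becomes: on the topological line $\bR$ equipped with an arbitrary $\Ck$-structure, produce one chart whose domain is everything. Since $\Udomain$ is homeomorphic to $\bR$ it is a connected boundaryless $1$-manifold, so every point is interior and the charts may be taken with image open in $\bR$ (no cone or boundary complications arise); intersecting each chart domain with its connected components, I may further assume all chart domains are open intervals, and transporting a fixed orientation of $\bR$ I may assume every transition map is increasing, hence (being a $\Ck$-diffeomorphism of intervals) has strictly positive derivative.

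The core of the argument, and the step I expect to be the main obstacle, is a two-chart merging lemma: if $\varphi_1\colon I_1\to\bR$ and $\varphi_2\colon I_2\to\bR$ are $\Ck$-compatible interval charts overlapping in a nonempty interval, then there is a chart $\psi\colon I_1\cup I_2\to\bR$ with $\restr{\psi}{I_1}=\varphi_1$ that is $\Ck$-compatible with both. The difficulty is that one cannot glue $\varphi_1$ and $\varphi_2$ directly, nor naively interpolate between them, without destroying injectivity or letting the derivative vanish, so I would work at the level of derivatives. On the overlap the transition $\tau=\varphi_1\circ\varphi_2^{-1}$ is a $\Ck$-diffeomorphism with $\tau'>0$ defined on $\varphi_2(I_1\cap I_2)$, and I would extend it to a $\Ck$-diffeomorphism $\tilde\tau$ on all of $\varphi_2(I_2)$ by choosing a $\Cr{\kk}$ cutoff $\lambda$ equal to $1$ on the overlap and to $0$ near the far end of $I_2$, setting $\tilde\tau'=\lambda\,\tau'+(1-\lambda)c$ for a constant $c>0$, and integrating from a base point inside the overlap. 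Being a convex combination of positive quantities, $\tilde\tau'$ stays positive, so $\tilde\tau$ is a $\Ck$-diffeomorphism by the inverse function theorem; it equals $\tau$ over the overlap and is affine near the far end. Then $\psi:=\varphi_1$ on $I_1$ and $\psi:=\tilde\tau\circ\varphi_2$ on $I_2$ is well defined (the two formulas agree on the overlap, where $\tilde\tau\circ\varphi_2=\tau\circ\varphi_2=\varphi_1$), is a homeomorphism onto an interval by monotonicity, and is a chart $\Ck$-compatible with each $\varphi_i$; the verification is exactly the criterion that a chart is $\Ck$-compatible with another precisely when the corresponding coordinate representation is a $\Ck$-diffeomorphism, which is condition (2) of the sublemma preceding Lemma~\ref{lm:char_equiv_cr_structures}.

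Finally I would assemble the global chart by induction along the line. Using the order on $\bR\cong\Udomain$, I would choose a countable family of interval charts covering $\Udomain$ and indexed so that consecutive domains overlap in an interval, and then, starting from one chart and repeatedly applying the merging lemma — each time keeping the already-constructed chart fixed and extending it over the next domain — obtain charts $\varphi_n\colon J_n\to\bR$ with $\restr{\varphi_{n+1}}{J_n}=\varphi_n$ and $\bigcup_n J_n=\Udomain$; consistency of the restrictions is exactly what the choice $\restr{\psi}{I_1}=\varphi_1$ in the lemma guarantees, since the construction only alters the newly adjoined chart. Their common extension $\varphi\colon\Udomain\to\bR$ is then a homeomorphism onto an open interval and is $\Ck$-compatible with $\Aatlas$, because compatibility is local and $\varphi$ agrees near each point with some $\varphi_n$. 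By Lemma~\ref{lm:char_equiv_cr_structures} and the chart-compatibility criterion, $\varphi$ is a $\Ck$-diffeomorphism from $\mpair{\Udomain}{\AtlasRestr{\Aatlas}{\Udomain}}$ onto its image carrying the canonical structure; post-composing with any $\Ck$-diffeomorphism of that open interval onto $\bR$ yields the required $\Ck$-diffeomorphism $\Ucmap\colon\mpair{\Udomain}{\AtlasRestr{\Aatlas}{\Udomain}}\to\mpair{\bR}{\CanonAtlas{\bR}}$, and the final assertion that $(\Udomain,\Ucmap)$ is a chart $\Ck$-compatible with $\Aatlas$ follows from the same criterion.
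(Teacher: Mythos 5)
Your overall strategy (reduce to interval charts with increasing transitions and merge them one at a time along the line) is the standard one, and it is essentially the argument behind the reference the paper points to for this theorem; but your two-chart merging lemma is false as stated, and the failure is exactly at the point you single out as the main obstacle. You demand $\restr{\psi}{I_1}=\varphi_1$ \emph{exactly}. Take $\varphi_1\colon I_1\to\bR$ to be an increasing homeomorphism onto \emph{all} of $\bR$ and $\varphi_2=\id_{I_2}$, with $I_2$ sticking out of $I_1$ on the right; these two charts are $\Ck$-compatible, yet no injective map on $I_1\cup I_2$ can restrict to $\varphi_1$ on $I_1$, because $\varphi_1$ already fills $\bR$ and leaves no room for $\psi(I_2\setminus I_1)$. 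Your construction breaks down correspondingly: the cutoff $\lambda$ cannot be $1$ on the whole overlap, since on the region where $0<\lambda<1$ the formula $\tilde\tau'=\lambda\,\tau'+(1-\lambda)c$ involves $\tau'$, which is defined only on $\varphi_2(I_1\cap I_2)$; so the interpolation must take place strictly \emph{inside} the overlap (and indeed, when $\sup\varphi_1(I_1)=+\infty$, the function $\tau'$ is non-integrable at the right end of $\varphi_2(I_1\cap I_2)$, so a $\tilde\tau$ with $\tilde\tau'=\tau'$ there would blow up before the end and could not be continued). But once the interpolation happens inside the overlap, $\tilde\tau\circ\varphi_2$ no longer equals $\varphi_1$ on all of $I_1\cap I_2$, so your piecewise definition of $\psi$ is not well defined, and the exact-nesting property $\restr{\varphi_{n+1}}{J_n}=\varphi_n$ on which your induction rests is lost. (The defect is not an artifact of unbounded images: with $\tau(x)=1-\sqrt{2-x}$ on the overlap, whose derivative tends to $+\infty$ at its right end, a chart on $I_1\cup I_2$ compatible with $\varphi_2$ could not restrict to $\varphi_1$ on $I_1$ either, since its coordinate representation in the chart $\varphi_2$ would fail to be $\Cr{1}$ at that endpoint.)

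The repair is the weaker merging statement that the paper actually records as the key technical step for this theorem: there is a chart $\psi$ on $I_1\cup I_2$ coinciding with $\varphi_1$ only on a neighbourhood of $I_1\setminus I_2$ and with a (modified) $\varphi_2$ near the far end of $I_2$, the modification being absorbed entirely inside the overlap. This is Lemma~\ref{lm:diff_to_canon_struct} together with Remark~\ref{rem:joinable_charts_on_R} (see also \cite[Lemma~4.2.1]{LysynskyiMaksymenko:SmoothStr:2024}); note that there the merged diffeomorphism is explicitly claimed to agree with the old charts only away from the overlap, never on all of $I_1$. With this version your induction still works, but with stabilization instead of exact extension: arrange the chart domains to march off to infinity in both directions; then every compact subset of $\Udomain$ eventually lies in $J_n\setminus I_{n+1}$, so the maps $\varphi_n$ are eventually independent of $n$ on it, and the limit is a well-defined increasing local homeomorphism that is $\Ck$-compatible with $\Aatlas$ because it locally coincides with some $\varphi_n$. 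The remaining parts of your proposal (passing to components and orientations, and the final post-composition with a diffeomorphism of the image interval onto $\bR$) are fine.
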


\section{Differentiable structures on the non-Hausdorff letter $\Ylet$}
\label{sect:diff_struct_on_Y}

\subsection{Certain diffeomorphism groups}
\label{sect:certains_diff_groups}
In what follows we fix once and for all some $\kk\in\{1,2,\ldots,\infty\}$ and denote by $\Rpos = (0;+\infty)$ the set of positive reals.
Let also
\begin{itemize}
    \item $\DRpos$ be the group of preserving orientation $\Ck$-diffeomorphisms of $\Rpos$;
    \item $\HRplz$ be the group of homeomorphisms of $\bR$ which preserve orientation and fix $0$, which is equivalent to the requirement that those homeomorphisms leave $\Rpos$ invariant;
    \item $\DRplz$ be the subgroup of $\HRplz$ consisting of $\Ck$-diffeomorphisms.
\end{itemize}
Since $\Rpos$ is invariant under $\DRplz$, we have a natural restriction homomorphism
\[
    \rho\colon\DRplz \to \DRpos,
    \qquad
    \rho(\dif)=\erestr{\dif}{\Rpos},
\]
see Section~\ref{sect:restriction_map}.
Let also
\[ \DExtRneg := \rho(\DRplz) \subset \DRpos \]
be the image of $\rho$.
Thus it consists of $\Ck$-diffeomorphisms of $\Rpos$ which can be extended to $\Ck$-diffeomorphisms of $\bR$.

\subsection{Two actions on the \term{set} $\DRpos$}
As mentioned above, there is a left action of $\DExtRneg\wr\bZ_{2}$ on $\DRpos$ defined by the rule:
\[
(\hadif,\hbdif, \delta)\cdot \gdif := (\hbdif\circ\gdif\circ\hadif^{-1})^{\delta},
\]
for $\hadif,\hbdif\in\DExtRneg$, $\delta\in\{\pm1\}$, and $\gdif\in\DRpos$.
The corresponding set of orbits will be denoted by
\[
\dbli{\DExtRneg}{\DRpos}
\]
and called \term{$(\DExtRneg,\pm)$-double cosets}%
\footnote{\,This term is not standard and was used in~\cite{LysynskyiMaksymenko:SmoothStr:2024}}.

Consider the restriction of the above action to the subgroup $\DExtRneg\times\DExtRneg\times 1$:
\[
(\hadif,\hbdif, 1)\cdot \gdif := \hbdif\circ\gdif\circ\hadif^{-1},
\]
$\hadif,\hbdif\in\DExtRneg$, $\gdif\in\DRpos$.
The corresponding orbits set is denoted by
\[
\dbl{\DExtRneg}{\DRpos}{\DExtRneg}
\]
and called \term{$\DExtRneg$-double cosets}.

Our aim is to show that these double cosets classify $\Ck$-structures of the non-Hausdorff letter $\Ylet$, see Theorem~\ref{th:Ck_str_on_Y} below.

\subsection{Non-Hausdorff letter $\Ylet$}
Let
\[
    \Ylet = (\bR\times\{0,1\})/\{ (x,0)\sim(x,1) \ \text{for} \ x\in\Rpos\}.
\]
be the topological space obtained by gluing two copies of $\bR$ via the identity homeomorphism of $\Rpos$, see Figure~\ref{fig:LY}.
\begin{figure}[htbp!]
\includegraphics[height=2.5cm]{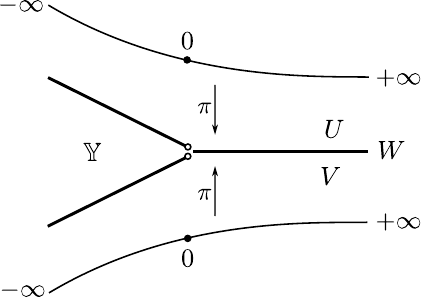}
\caption{Manifold $\Ylet$}\label{fig:y_letter}
\end{figure}
Let also $\pi\colon\RTwoCopies \to \Ylet$ be the corresponding quotient map.
Denote
\begin{gather*}
    \Udomain = \pi(\bR\times\{0\}),\qquad
    \Vdomain = \pi(\bR\times\{1\}),\\
    \Wdomain = \Udomain \cap \Vdomain
    = \pi\bigl(\Rpos \times\{0\}\bigr)
    = \pi\bigl(\Rpos \times\{1\}\bigr).
\end{gather*}
It is easy to see that every homeomorphism $\dif\colon\Ylet\to\Ylet$ either leaves $\Udomain$ and $\Vdomain$ invariant or exchanges them.
In particular, $\dif$ always preserves $\Wdomain$.

It is also evident that the restriction of $\pi$ onto $\bR\times\{0\}$ and $\bR\times\{1\}$ are open embeddings, whence the inverse maps
\begin{equation}\label{equ:canonical_atlas_on_Y}
\begin{aligned}
    \canUcmap &= (\restr{\pi}{\bR\times\{0\}})^{-1}\colon\Udomain\to\bR\times\{0\} \equiv \bR, &
    \canVcmap &= (\restr{\pi}{\bR\times\{1\}})^{-1}\colon\Vdomain\to\bR\times\{1\} \equiv \bR,
\end{aligned}
\end{equation}
can be regarded as charts $(\Udomain,\canUcmap)$ and $(\Vdomain,\canVcmap)$ of $\Ylet$.
Note that the corresponding transition map $\canVcmap\circ\canUcmap^{-1} = \id_{\Rpos}$, whence the atlas $\CanonAtlas{\Ylet}=\{ (\Udomain,\canUcmap), (\Vdomain,\canVcmap) \}$ on $\Ylet$ is $\Ck$ for all $\kk=1,\ldots,\infty$.
We will call this atlas \term{canonical}.

\begin{subdefinition}
A $\Ck$-atlas $\Aatlas$ on $\Ylet$ will be called \term{minimal} if it consists of two charts $\Ucmap\colon\Udomain\to\bR$ and $\Vcmap\colon\Vdomain\to\bR$ being homeomorphisms onto and such that
\begin{equation}\label{equ:min_atlas__uW_vW_pos}
    \Ucmap(\Wdomain)=\Vcmap(\Wdomain)=\Rpos.
\end{equation}
\end{subdefinition}

For instance, the above canonical atlas $\CanonAtlas{\Ylet}$ is minimal.

Notice also that the transition map $\Vcmap\circ\Ucmap^{-1}$ of a minimal $\Ck$-atlas preserves orientation of $\Rpos$, and thus belongs to $\DRpos$.
The following lemma shows that every $\gdif\in\DRpos$ is a transition map for some minimal $\Ck$-atlas of $\Ylet$.

\begin{sublemma}\label{lm:min_atlas_realization_on_Y}
Let $\hat{\Vcmap}\colon\overline{\Vdomain\setminus\Wdomain} \to(-\infty;0]$ and $\Ucmap\colon\Udomain\to\bR$ be two homeomorphisms such that $\Ucmap(\Wdomain)=\Rpos$.
Then for any $\Ck$-diffeomorphism $\gdif\in\DRpos$, the homeomorphism $\hat{\Vcmap}$ extends to a unique homeomorphism $\Vcmap\colon\Vdomain\to\bR$ such that the pair $\Aatlas=\{(\Udomain,\Ucmap),(\Vdomain,\Vcmap)\}$ constitutes a minimal $\Ck$-atlas on $\Ylet$ whose transition map $\Vcmap\circ\Ucmap^{-1}$ coincides with $\gdif$.
\end{sublemma}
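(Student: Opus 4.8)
The plan is to construct $\Vcmap$ by hand: on the ``negative'' part of $\Vdomain$ it must coincide with the given $\hat{\Vcmap}$, while on the overlap $\Wdomain$ it is forced by the requirement that the transition map equal $\gdif$. First I note that $\Vdomain$ is the disjoint union of the open set $\Wdomain=\pi(\Rpos\times\{1\})$ and the closed set $\overline{\Vdomain\setminus\Wdomain}=\pi((-\infty;0]\times\{1\})$, the latter containing the second origin $\pi(0,1)$, which does not lie in $\Udomain$. The condition $\Vcmap\circ\Ucmap^{-1}=\gdif$ on $\Ucmap(\Wdomain)=\Rpos$ forces $\restr{\Vcmap}{\Wdomain}=\gdif\circ\restr{\Ucmap}{\Wdomain}$, and the condition that $\Vcmap$ extend $\hat{\Vcmap}$ forces $\restr{\Vcmap}{\overline{\Vdomain\setminus\Wdomain}}=\hat{\Vcmap}$. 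Since these two pieces partition $\Vdomain$, the map $\Vcmap$ is thereby uniquely determined, which settles the uniqueness clause; it remains to verify that this $\Vcmap$ is a homeomorphism onto $\bR$ yielding a minimal $\Ck$-atlas.

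Next I would check bijectivity. As $\gdif$ is an orientation-preserving diffeomorphism of $\Rpos$ we have $\gdif(\Rpos)=\Rpos$, so $\Vcmap(\Wdomain)=\gdif(\Ucmap(\Wdomain))=\Rpos$, whereas $\Vcmap\bigl(\overline{\Vdomain\setminus\Wdomain}\bigr)=\hat{\Vcmap}\bigl(\overline{\Vdomain\setminus\Wdomain}\bigr)=(-\infty;0]$. These two images are disjoint and cover $\bR$, and $\Vcmap$ is injective on each piece, so $\Vcmap\colon\Vdomain\to\bR$ is a bijection. On the interior of each piece $\Vcmap$ is plainly continuous, being a restriction of a continuous map.

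The main obstacle is continuity of $\Vcmap$ at the second origin $\pi(0,1)$, where $\Vcmap(\pi(0,1))=\hat{\Vcmap}(\pi(0,1))=0$ since $\hat{\Vcmap}$ carries the endpoint $\pi(0,1)$ to the endpoint of $(-\infty;0]$. Because $\Vdomain$ is locally homeomorphic to $\bR$ there, it suffices to match the two one-sided limits of $\Vcmap$ at $\pi(0,1)$. From the side of $\overline{\Vdomain\setminus\Wdomain}$ the limit is $0$ by continuity of $\hat{\Vcmap}$. From the side of $\Wdomain$, a point $\pi(x,1)=\pi(x,0)$ with $x\to0^{+}$ tends in $\Udomain$ to $\pi(0,0)$, so $\Ucmap(\pi(x,0))\to\Ucmap(\pi(0,0))=0$ through positive values, because $\Ucmap$ is a homeomorphism with $\Ucmap(\Wdomain)=\Rpos$ and $\pi(0,0)$ is the boundary point of $\Wdomain$ in $\Udomain$. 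The crucial point is that an orientation-preserving homeomorphism of $\Rpos=(0;+\infty)$ is an increasing bijection, whence $\lim_{t\to0^{+}}\gdif(t)=0$; therefore $\Vcmap(\pi(x,0))=\gdif(\Ucmap(\pi(x,0)))\to0$ as well, and the two limits agree. This is exactly the place where orientation-preservation of $\gdif$ is used. Finally, via the identification $\Vdomain\cong\bR$ the map $\Vcmap$ is a continuous bijection of $\bR$ onto $\bR$, hence strictly monotone and therefore a homeomorphism.

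Once $\Vcmap$ is known to be a homeomorphism, minimality and the $\Ck$-property are immediate: we have $\Ucmap(\Wdomain)=\Rpos$ by hypothesis and $\Vcmap(\Wdomain)=\Rpos$ as computed, so $\Aatlas=\{(\Udomain,\Ucmap),(\Vdomain,\Vcmap)\}$ satisfies the minimality condition~\eqref{equ:min_atlas__uW_vW_pos}; and its transition map is $\Vcmap\circ\Ucmap^{-1}=\gdif\in\DRpos$, a $\Ck$-diffeomorphism, so $\Aatlas$ is a minimal $\Ck$-atlas with the prescribed transition map, as required.
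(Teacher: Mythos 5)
Your proof is correct and takes essentially the same route as the paper: the paper's entire proof consists of exhibiting the same forced formula for $\Vcmap$ (equal to $\hat{\Vcmap}$ on $\overline{\Vdomain\setminus\Wdomain}$ and to $\gdif\circ\Ucmap$ on $\Wdomain$) and stating that "one easily checks" it works. Your write-up simply carries out those checks in detail --- uniqueness by the forcing argument, bijectivity, continuity at the second origin $\pi(0,1)$ (correctly identifying orientation-preservation of $\gdif$ as the point that makes the one-sided limits match), and monotonicity --- so it is a fleshed-out version of the paper's argument.
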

\begin{proof}
One easily checks $\Vcmap\colon\Vdomain\to\bR$ should be given by
\[
\Vcmap(\px) =
\begin{cases}
\hat{\Vcmap}(\px),     & \px\in\overline{\Vdomain\setminus\Wdomain}, \\
\gdif\circ\Ucmap(\px), & \px\in\Wdomain.
\end{cases}
\qedhere
\]
\end{proof}

In particular, let $\CanonAtlas{\Ylet}=\{ (\Udomain,\canUcmap), (\Vdomain,\canVcmap) \}$ be the canonical atlas on $\Ylet$.
For each $\gdif\in\DRpos$ define the homeomorphism $\Vcmap[\gdif]\colon\Vdomain\to\bR$ by
\[
    \Vcmap[\gdif](\px) =
    \begin{cases}
        \canVcmap(\px),             & \px \leq 0, \\
        \gdif\circ\canUcmap(\px),   & \px > 0.
    \end{cases}
\]
Then the pair $\Aatlas_{\gdif} = \{ (\Udomain,\canUcmap), (\Vdomain,\Vcmap[\gdif]) \}$ is a $\Ck$-atlas on $\Ylet$ whose transition map is $\gdif$, while the correspondence $\gdif\mapsto\Aatlas_{\gdif}$ is an injection of $\DRpos$ into the set of all minimal atlases on $\Ylet$.

\begin{sublemma}\label{lm:min_atlas_on_Y}
Every $\Ck$-structure on $\Ylet$ has a minimal $\Ck$-atlas.
\end{sublemma}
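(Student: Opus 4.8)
The plan is to start from a given $\Ck$-structure, viewed as its maximal atlas $\Astruct$, and to manufacture a minimal atlas defining the same structure by choosing one good chart on each of the two halves $\Udomain$ and $\Vdomain$. Since $\Udomain$ and $\Vdomain$ are open subsets of $\Ylet$ each homeomorphic to $\bR$, Theorem~\ref{th:uniq_ck_struct_on_R} applies to both of them and produces homeomorphisms $\Ucmap\colon\Udomain\to\bR$ and $\Vcmap\colon\Vdomain\to\bR$ which are $\Ck$-diffeomorphisms $\mpair{\Udomain}{\AtlasRestr{\Astruct}{\Udomain}}\to\mpair{\bR}{\CanonAtlas{\bR}}$ and $\mpair{\Vdomain}{\AtlasRestr{\Astruct}{\Vdomain}}\to\mpair{\bR}{\CanonAtlas{\bR}}$, and whose underlying charts $(\Udomain,\Ucmap)$ and $(\Vdomain,\Vcmap)$ are $\Ck$-compatible with $\Astruct$, hence belong to the maximal atlas $\Astruct$. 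As $\Udomain\cup\Vdomain=\Ylet$, the pair $\{(\Udomain,\Ucmap),(\Vdomain,\Vcmap)\}$ is already a $\Ck$-atlas defining the original structure; the only property of minimality still to be arranged is the normalization~\eqref{equ:min_atlas__uW_vW_pos}, namely $\Ucmap(\Wdomain)=\Vcmap(\Wdomain)=\Rpos$.

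Next I would pin down the possible shapes of $\Ucmap(\Wdomain)$. Under the canonical identification $\canUcmap\colon\Udomain\to\bR$ the subset $\Wdomain$ corresponds to $\Rpos$, so $\Wdomain$ is open and connected and its complement $\Udomain\setminus\Wdomain$ is a nonempty connected set (a closed half-line). Therefore $\Ucmap(\Wdomain)$ is a connected open proper subset of $\bR$ whose complement $\Ucmap(\Udomain\setminus\Wdomain)$ is again connected and nonempty, and the only such subsets of $\bR$ are the open rays $(a;+\infty)$ and $(-\infty;a)$ for some $a\in\bR$. The same dichotomy holds for $\Vcmap(\Wdomain)$.

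Finally I would normalize by post-composing with an affine diffeomorphism of $\bR$: in the case $\Ucmap(\Wdomain)=(a;+\infty)$ use the translation $\px\mapsto\px-a$, and in the case $\Ucmap(\Wdomain)=(-\infty;a)$ use the orientation-reversing map $\px\mapsto a-\px$; in either case one obtains a homeomorphism $\bR\to\bR$ carrying $\Ucmap(\Wdomain)$ onto $\Rpos$. Such an affine map is a $\Ck$-diffeomorphism of $\mpair{\bR}{\CanonAtlas{\bR}}$, so the corrected chart is still a $\Ck$-diffeomorphism onto $\mpair{\bR}{\CanonAtlas{\bR}}$, hence (as in Theorem~\ref{th:uniq_ck_struct_on_R}) still $\Ck$-compatible with $\Astruct$ and still a homeomorphism onto $\bR$. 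Applying the same correction to $\Vcmap$, the two resulting charts form a minimal $\Ck$-atlas; since both of them lie in the maximal atlas $\Astruct$, they are mutually $\Ck$-compatible and this atlas determines exactly the original $\Ck$-structure.

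The hard part will be the middle step: identifying $\Ucmap(\Wdomain)$ as an open ray rather than a bounded interval. This is purely topological and rests on the facts that $\Wdomain$ is connected while $\Udomain\setminus\Wdomain$ is a connected closed half-line, so that a homeomorphism $\Udomain\to\bR$ is forced to send $\Wdomain$ to a connected open set with connected complement. Once this is established, the affine normalization and the verification that the two charts assemble into an atlas compatible with $\Astruct$ are routine.
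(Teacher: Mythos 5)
Your proof is correct and follows essentially the same route as the paper's: apply Theorem~\ref{th:uniq_ck_struct_on_R} to $\Udomain$ and $\Vdomain$, normalize the resulting charts by a $\Ck$-diffeomorphism of $\bR$ so that $\Wdomain$ maps onto $\Rpos$, and observe that the two charts then form a minimal atlas compatible with the given structure. The only difference is that you spell out why $\Ucmap(\Wdomain)$ must be an open ray (via connectedness of $\Wdomain$ and of $\Udomain\setminus\Wdomain$) and exhibit explicit affine normalizing maps, details the paper's proof asserts without elaboration.
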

\begin{proof}
Let $\Batlas$ be any $\Ck$-atlas on $\Ylet$.
We should construct a $\Ck$-compatible minimal atlas $\Aatlas$ on $\Ylet$.

Let $\CanonAtlas{\bR}=\{ (\bR,\id_{\bR})\}$ be the canonical atlas on $\bR$.
Since $\Udomain$ and $\Vdomain$ are open subsets of $\Ylet$ homeomorphic with $\bR$, it follows from Theorem~\ref{th:uniq_ck_struct_on_R}, that there exist $\Ck$-diffeomorphisms
\begin{align*}
    &\Ucmap\colon\mpair{\Udomain}{\AtlasRestr{\Batlas}{\Udomain}}\to\mpair{\bR}{\CanonAtlas{\bR}},&
    &\Vcmap\colon\mpair{\Vdomain}{\AtlasRestr{\Batlas}{\Vdomain}}\to\mpair{\bR}{\CanonAtlas{\bR}}.
\end{align*}
One can also assume that~\eqref{equ:min_atlas__uW_vW_pos} is satisfied.
Indeed, we have that $\Ucmap(\Wdomain)$ is either $(-\infty;a)$ or $(a;+\infty)$ for some $a\in\bR$.
Take any $\Ck$-diffeomorphism $p\colon\bR\to\bR$ such that $p(\Ucmap(\Wdomain)) = \Rpos$ and replace $\Ucmap$ with $p\circ\Ucmap$.
Similar changes should be made for $\Vcmap$ if necessary.

Then we get two charts $(\Udomain,\Ucmap)$ and $(\Vdomain,\Vcmap)$ on $\Ylet$ being $\Ck$-compatible with $\Batlas$ and satisfying~\eqref{equ:min_atlas__uW_vW_pos}.
Since $\Ylet=\Udomain\cup\Vdomain$, the pair $\Aatlas=\{(\Udomain,\Ucmap),(\Vdomain,\Vcmap)\}$ is a minimal $\Ck$-atlas on $\Ylet$ being $\Ck$-compatible with $\Batlas$, and thus defining the same $\Ck$-structure.
\end{proof}

Denote by $\mathfrak{C}^{\kk}(\Ylet)$ the set of all $\Ck$-structures on $\Ylet$.
Let also $\Homeo(\Ylet)$ be the group of all homeomorphisms of $\Ylet$, and $\Homeo(\Ylet,\Udomain,\Vdomain)$ be its subgroup leaving invariant $\Udomain$ and $\Vdomain$.
The following statement classifies $\Ck$-structures on $\Ylet$.

\begin{subtheorem}\label{th:Ck_str_on_Y}
The correspondence
\[
\Aatlas=\{(\Udomain,\Ucmap),(\Vdomain,\Vcmap)\}
\ \longmapsto \
\Atrmap = \Vcmap^{-1}\circ\Ucmap
\]
associating to each minimal $\Ck$-atlas $\Aatlas$ on $\Ylet$ its transition map $\Atrmap$ yields bijections between
\begin{itemize}
\item
the set $\mathfrak{C}^{\kk}(\Ylet) / \Homeo(\Ylet)$ of all $\Ck$-structures on $\Ylet$ up to a $\Ck$-diffeomorphism and
\item
the set $\dbli{\DExtRneg}{\DRpos}$,
\end{itemize}
and also between
\begin{itemize}
\item
the set $\mathfrak{C}^{\kk}(\Ylet) / \Homeo(\Ylet,\Udomain,\Vdomain)$ of $\Ck$-structures on $\Ylet$ up to a $\Ck$-diffeomorphism leaving invariant $\Udomain$ and $\Vdomain$ and
\item
the set $\dbl{\DExtRneg}{\DRpos}{\DExtRneg}$.
\end{itemize}
\end{subtheorem}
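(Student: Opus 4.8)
The plan is to classify minimal atlases and read everything off their transition maps. By Lemma~\ref{lm:min_atlas_on_Y} every $\Ck$-structure on $\Ylet$ admits a minimal atlas, so $\Aatlas\mapsto[\Aatlas]$ is a surjection from the set of minimal atlases onto $\mathfrak{C}^{\kk}(\Ylet)$, hence onto each of the two quotients in the statement; while by Lemma~\ref{lm:min_atlas_realization_on_Y} the transition-map assignment $\Aatlas\mapsto\Atrmap$ is a surjection of the same set onto $\DRpos$. Both target quotients are thus quotients of one and the same set, and it suffices to show that the induced equivalence relations agree. Concretely, I would prove that two minimal atlases $\Aatlas,\Batlas$ have $\Homeo(\Ylet,\Udomain,\Vdomain)$-equivalent (resp.\ $\Homeo(\Ylet)$-equivalent) structures if and only if $\Atrmap,\Btrmap$ lie in the same $\DExtRneg$-double coset (resp.\ $(\DExtRneg,\pm)$-double coset); granting this, $\Aatlas\mapsto\Atrmap$ descends to the two asserted bijections.

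First I would translate orbit-equivalence into the existence of a diffeomorphism: using the properties of the induced atlas $\indatl{\dif}{\Aatlas}$ recalled in Section~\ref{sect:classification_of_diff_struct} together with Lemma~\ref{lm:char_equiv_cr_structures}, the structures $[\Aatlas]$ and $[\Batlas]$ lie in the same orbit of a subgroup $G\leq\Homeo(\Ylet)$ exactly when some $\dif\in G$ is a $\Ck$-diffeomorphism $\mpair{\Ylet}{\Aatlas}\to\mpair{\Ylet}{\Batlas}$. The analysis of such $\dif$ then splits according to the dichotomy, already observed above, that $\dif$ either preserves $\Udomain,\Vdomain$ or exchanges them. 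In the first case, write $\adif=\Ucmap'\circ\dif\circ\Ucmap^{-1}$ and $\bdif=\Vcmap'\circ\dif\circ\Vcmap^{-1}$ for the coordinate representations on the two sheets (primes denoting the charts of $\Batlas$). Being a $\Ck$-diffeomorphism forces $\adif,\bdif$ to be $\Ck$-diffeomorphisms of $\bR$, and $\dif(\Wdomain)=\Wdomain$ with $\Ucmap(\Wdomain)=\Vcmap(\Wdomain)=\Rpos$ forces $\adif(\Rpos)=\bdif(\Rpos)=\Rpos$, so that $\adif,\bdif\in\DRplz$ and their restrictions $\hadif,\hbdif$ to $\Rpos$ belong to $\DExtRneg$. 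Comparing the two expressions for $\dif$ along $\Wdomain$ gives the single relation $\Btrmap=\hbdif\circ\Atrmap\circ\hadif^{-1}$, i.e.\ the $\DExtRneg$-double coset condition. Conversely, given $\hadif,\hbdif\in\DExtRneg$ realising this relation I would extend them to $\adif,\bdif\in\DRplz$ and glue $(\Ucmap')^{-1}\circ\adif\circ\Ucmap$ on $\Udomain$ to $(\Vcmap')^{-1}\circ\bdif\circ\Vcmap$ on $\Vdomain$; the relation is exactly the condition that these agree on $\Wdomain$, so they assemble into a $\Ck$-diffeomorphism preserving $\Udomain,\Vdomain$. This yields the second bijection.

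For the full group $\Homeo(\Ylet)$ the only new case is that $\dif$ exchanges $\Udomain$ and $\Vdomain$. Running the identical computation with $\adif=\Vcmap'\circ\dif\circ\Ucmap^{-1}$ and $\bdif=\Ucmap'\circ\dif\circ\Vcmap^{-1}$ now produces $\Btrmap=\hadif\circ\Atrmap^{-1}\circ\hbdif^{-1}$, which is precisely the orbit of $\Atrmap$ under a $\delta=-1$ element of the wreath product; combined with the previous case this is exactly membership in the common $(\DExtRneg,\pm)$-double coset $\DExtRneg\,\Atrmap\,\DExtRneg\cup\DExtRneg\,\Atrmap^{-1}\,\DExtRneg$. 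The converse again glues extensions of $\hadif,\hbdif$, this time into an exchanging homeomorphism, giving the first bijection.

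I expect the main obstacle to be the two converse gluing constructions, where one must check not merely that the two coordinate-defined maps agree on $\Wdomain$ (which is the double-coset relation) but that they assemble into a genuine self-homeomorphism of the non-Hausdorff space $\Ylet$ treating the two branch points correctly—fixing each in the invariant case and interchanging them in the exchanging case—and that the restriction of a global $\Ck$-diffeomorphism to each sheet really has $\Ck$ coordinate representation for the induced structures. Keeping the order of composition and the inverses straight in the exchanging case, so that $\Atrmap^{-1}$ rather than $\Atrmap$ appears, is the computation I would verify most carefully, since it is exactly what pins down the $\bZ_2$-factor distinguishing the two bijections.
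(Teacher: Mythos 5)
Your proposal is correct and takes essentially the same route as the paper: the paper packages your two case analyses (preserving versus exchanging $\Udomain$ and $\Vdomain$, with the coordinate representations $\adif,\bdif\in\DRplz$, the restriction to $\Rpos$, and the gluing converse via extensions of $\hadif,\hbdif$) as Lemma~\ref{lm:diffs_of_Y}, and then deduces the theorem exactly as you outline, using Lemma~\ref{lm:min_atlas_on_Y} to represent every structure by a minimal atlas and Lemma~\ref{lm:min_atlas_realization_on_Y} for surjectivity onto $\DRpos$. Your exchanging-case identity $\Btrmap=\hadif\circ\Atrmap^{-1}\circ\hbdif^{-1}$ is just the inverse form of the paper's $\Btrmap^{-1}=\hbdif\circ\Atrmap\circ\hadif^{-1}$, hence equivalent, and your reformulation of well-definedness plus injectivity as agreement of two equivalence relations on the set of minimal atlases is only a repackaging of the paper's argument.
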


For the proof is based on the following lemma.
\begin{sublemma}\label{lm:diffs_of_Y}
Let $\Aatlas=\{(\Udomain,\aUcmap),(\Vdomain,\aVcmap)\}$ and $\Batlas=\{(\Udomain,\bUcmap),(\Vdomain,\bVcmap)\}$ be two minimal $\Ck$-atlases on $\Ylet$, and $\Atrmap=\aVcmap\circ\aUcmap^{-1}, \Btrmap=\bVcmap\circ\bUcmap^{-1} \in \DRpos$ be their respective transition maps.
\begin{enumerate}[leftmargin=*, itemsep=1ex, label={\rm\arabic*)}]
\item\label{enum:lm:diffs_of_Y:AB_compatible}
Then $\Aatlas$ and $\Batlas$ are $\Ck$-compatible iff $\bUcmap\circ\aUcmap^{-1}$ and $\bVcmap\circ\aVcmap^{-1}\in\DRplz$, i.e.\ $\bUcmap$ and $\aUcmap$ (as well as $\bVcmap$ and $\aVcmap$) belong to the same adjacent right class $\HRplz/\DRplz$.

\item\label{enum:lm:diffs_of_Y:UV}
The following conditions are equivalent:
\begin{enumerate}[label={\rm(2\alph*)}]
\item\label{enum:lm:diffs_of_Y:UV:h}
there exists a $\Ck$-diffeomorphism $\dif\colon\mpair{\Ylet}{\Aatlas}\to\mpair{\Ylet}{\Batlas}$ leaving invariant $\Udomain$ and $\Vdomain$;
\item\label{enum:lm:diffs_of_Y:UV:gb__b_ga_ainv}
there exist $\hadif,\hbdif\in\DExtRneg$ such that $\Btrmap = \hbdif\circ \Atrmap \circ \hadif^{-1}$.
\end{enumerate}

\item\label{enum:lm:diffs_of_Y:VU}
Similarly, the following conditions are also equivalent:
\begin{enumerate}[label={\rm(3\alph*)}]
\item\label{enum:lm:diffs_of_Y:VU:h}
there exists a $\Ck$-diffeomorphism $\dif\colon\mpair{\Ylet}{\Aatlas}\to\mpair{\Ylet}{\Batlas}$ exchanging $\Udomain$ and $\Vdomain$;
\item\label{enum:lm:diffs_of_Y:VU:gbinv__b_ga_ainv}
there exist $\hadif,\hbdif\in\DExtRneg$ such that $\Btrmap^{-1} = \hbdif \circ \Atrmap \circ \hadif^{-1}$.
\end{enumerate}
\end{enumerate}
\end{sublemma}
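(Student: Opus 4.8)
The plan is to reduce everything to the behaviour of transition maps on $\Rpos$, exploiting the defining property~\eqref{equ:min_atlas__uW_vW_pos} of minimal atlases. First I would record the observation that drives all three parts: if $\Ucmap\colon\Udomain\to\bR$ is a chart of a minimal atlas, then $\Ucmap$ carries $\Wdomain$ onto $\Rpos$ and $\Udomain\setminus\Wdomain$ onto $(-\infty;0]$; consequently the transition map between any two charts sharing a domain (either $\Udomain$ or $\Vdomain$) is a homeomorphism of $\bR$ preserving $\Rpos$, hence fixing $0$ and preserving orientation, so it lies in $\HRplz$, and it lies in $\DRplz$ exactly when the two charts are $\Ck$-compatible (a surjective $\Ck$-embedding of $\bR$ is automatically a $\Ck$-diffeomorphism).

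For~\ref{enum:lm:diffs_of_Y:AB_compatible}, I would unwind the $\Ck$-compatibility of $\Aatlas\cup\Batlas$ into the pairwise compatibility of its four charts. The two pairs sharing a domain, $(\Udomain,\aUcmap)$ with $(\Udomain,\bUcmap)$ and $(\Vdomain,\aVcmap)$ with $(\Vdomain,\bVcmap)$, give by the observation above exactly the stated conditions $\bUcmap\circ\aUcmap^{-1}\in\DRplz$ and $\bVcmap\circ\aVcmap^{-1}\in\DRplz$. The two remaining ``mixed'' pairs are then automatic: on $\Rpos$ one has, for instance, $\bVcmap\circ\aUcmap^{-1}=(\bVcmap\circ\aVcmap^{-1})\circ\Atrmap$, a composition of $\Atrmap\in\DRpos$ with the restriction to $\Rpos$ of $\bVcmap\circ\aVcmap^{-1}\in\DRplz$; since such a restriction lies in $\DExtRneg\subset\DRpos$, the whole composition is a $\Ck$-diffeomorphism of $\Rpos$, and symmetrically for the other mixed pair.

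The equivalences~\ref{enum:lm:diffs_of_Y:UV} and~\ref{enum:lm:diffs_of_Y:VU} I would prove in parallel, as they differ only in the pairing of charts. For the forward directions, given a $\Ck$-diffeomorphism $\dif$ I read off its coordinate representations on each of $\Udomain$ and $\Vdomain$: in the invariant case these are $\bUcmap\circ\dif\circ\aUcmap^{-1}$ and $\bVcmap\circ\dif\circ\aVcmap^{-1}$, while in the exchanging case they are $\bVcmap\circ\dif\circ\aUcmap^{-1}$ and $\bUcmap\circ\dif\circ\aVcmap^{-1}$. In either case each representation maps $\Rpos$ onto $\Rpos$, hence lies in $\DRplz$, and its restriction to $\Rpos$ lies in $\DExtRneg$; call these restrictions $\hadif$ and $\hbdif$. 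Single-valuedness of $\dif$ on $\Wdomain$ forces the two representations to agree there once the transition maps are inserted, and the change-of-coordinates identity~\eqref{equ:change_coord_repr} converts this agreement into $\Btrmap=\hbdif\circ\Atrmap\circ\hadif^{-1}$ in the invariant case and $\Btrmap^{-1}=\hbdif\circ\Atrmap\circ\hadif^{-1}$ in the exchanging case; the appearance of $\Btrmap$ versus $\Btrmap^{-1}$ is precisely the effect of swapping the target charts.

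For the converse directions, given $\hadif,\hbdif\in\DExtRneg$ realizing the respective identity, I would choose lifts $\adif,\bdif\in\DRplz$ with $\rho(\adif)=\hadif$ and $\rho(\bdif)=\hbdif$ (possible by the very definition $\DExtRneg=\rho(\DRplz)$) and define $\dif$ on $\Udomain$ and $\Vdomain$ by the corresponding chart formulas, e.g.\ $\dif=\bUcmap^{-1}\circ\adif\circ\aUcmap$ on $\Udomain$ in the invariant case. Reversing the previous computation shows that the two local formulas agree on $\Wdomain$ exactly because of the assumed double-coset identity, so $\dif$ is a well-defined homeomorphism of $\Ylet$ of the required type, and it is a $\Ck$-diffeomorphism since in the chosen charts it is represented by $\adif$ or $\bdif$. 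I expect the main obstacle to be purely organisational: tracking the restriction maps (the distinction between $\restr{\dif}{\Wdomain}$ and $\erestr{\dif}{\Wdomain}$ stressed in Section~\ref{sect:restriction_map}) and which chart serves as source or target, together with the conceptual point that only the behaviour on $\Rpos$ governs the gluing---whence the group $\DExtRneg$ of restrictions rather than $\DRplz$---while the freedom to extend the lifts arbitrarily over $(-\infty;0]$ is exactly what guarantees smoothness of $\dif$ across the two branch points.
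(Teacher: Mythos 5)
Your proposal is correct and follows essentially the same route as the paper: coordinate representations of $\dif$ in the two pairings of charts, the observation that preserving $\Rpos$ places these representations in $\HRplz$ (and in $\DRplz$ exactly when $\dif$ is $\Ck$), the double-coset identity $\Btrmap^{\pm1}=\hbdif\circ\Atrmap\circ\hadif^{-1}$ extracted from agreement of the two representations on $\Wdomain$ (the paper's ``middle vertical rectangle'' in its diagrams~\eqref{equ:h_UU_VV__Y} and~\eqref{equ:h_UV_VU__Y}), and the converse by lifting $\hadif,\hbdif$ through $\rho$ to $\DRplz$ and gluing $\dif$ chartwise as in~\eqref{equ:h_Y_Y__reconstr_via_a_b}. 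The differences are only presentational: you argue via the change-of-coordinates identity~\eqref{equ:change_coord_repr} rather than the displayed diagrams, and you spell out two details the paper leaves implicit (the $\Ck$-compatibility of the mixed chart pairs in part~\ref{enum:lm:diffs_of_Y:AB_compatible}, and the converse implication in part~\ref{enum:lm:diffs_of_Y:VU}).
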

\begin{proof}
\ref{enum:lm:diffs_of_Y:AB_compatible}
Notice that $\adif:=\bUcmap\circ\aUcmap^{-1}$ and $\bdif:=\bVcmap\circ\aVcmap^{-1}$ are the transition maps between the respective charts of $\Aatlas$ and $\Batlas$.
Since all coordinate homeomorphisms send $\Wdomain$ onto $\Rpos$, it follows that $\adif$ and $\bdif$ preserve $\Rpos$, i.e.\ they belong to $\DRplz$.
It remains to note that $\Aatlas$ and $\Batlas$ are $\Ck$-compatible iff $\adif,\bdif\in\DRplz$.

\ref{enum:lm:diffs_of_Y:UV}
Let $\dif\colon\Ylet\to\Ylet$ be a homeomorphism which leaves invariant $\Udomain$ and $\Vdomain$, i.e.\ $\dif(\Udomain)=\Udomain$ and $\dif(\Vdomain)=\Vdomain$.
Then its coordinate representation with respect to the pair of charts $(\Udomain,\aUcmap)$ and $(\Udomain,\bUcmap)$ and also with respect to another pair of charts $(\Vdomain,\aVcmap)$ and $(\Vdomain,\bVcmap)$ can be seen from the following diagram:
\begin{equation*}
\begin{aligned}
    &
    \xymatrix@C=6em{
        \Udomain \ar[r]^{\erestr{\dif}{\Udomain}} \ar[d]_{\aUcmap}                      &
        \Udomain \ar[d]^{\bUcmap} \\
        \bR \ar[r]^{\adif = \bUcmap\circ\erestr{\dif}{\Udomain}\circ\aUcmap^{-1}}  &
        \bR
    }
    &
    \qquad\qquad
    &
    \xymatrix@C=6em{
        \Vdomain \ar[r]^{\erestr{\dif}{\Vdomain}} \ar[d]_{\aVcmap}                      &
        \Vdomain \ar[d]^{\bVcmap}\\
        \bR \ar[r]^{\bdif = \bVcmap\circ\erestr{\dif}{\Vdomain}\circ\aVcmap^{-1}}  &
        \bR
    }
\end{aligned}
\end{equation*}

Since $\dif(\Wdomain)=\Wdomain$, it follows from~\eqref{equ:min_atlas__uW_vW_pos} that $\adif$ and $\bdif$ are homeomorphisms of $\bR$ which leave invariant $\Rpos$, i.e.\ they belong to $\HRplz$.
Let us join the above pair of diagrams including also the maps of $\Wdomain$:
\begin{equation}\label{equ:h_UU_VV__Y}
\begin{gathered}
\xymatrix@C=3em@R=1.5em{
    \bR
        \ar@{-->}@/_5ex/[ddd]_-{\adif}
    &
    \Rpos
        \ar@{_(->}[l]
        \ar[rr]^-{\Atrmap \,=\, \aVcmap\,\circ\,\aUcmap^{-1}}
        \ar@{-->}@/^3.5ex/[ddd]^-{\erestr{\adif}{\Rpos}}
    &&
    \Rpos
        \ar@{^(->}[r]
        \ar@{-->}@/_3.5ex/[ddd]_-{\erestr{\bdif}{\Rpos}}
    &
    \bR
        \ar@{-->}@/^5ex/[ddd]^-{\bdif}
    \\
    %%%%%%
    \Udomain
        \ar[u]_-{\aUcmap}
        \ar[d]_-{\erestr{\dif}{\Udomain}}
    &
    \Wdomain
        \ar@{_(->}[l]
        \ar@{=}[rr]
        \ar[u]^-{\aUcmap}
        \ar[d]_-{\erestr{\dif}{\Wdomain}}
    &&
    \Wdomain
        \ar@{^(->}[r]
        \ar[u]_-{\aVcmap}
        \ar[d]^-{\erestr{\dif}{\Wdomain}}
    &
    \Vdomain
        \ar[u]^-{\aVcmap}
        \ar[d]^-{\erestr{\dif}{\Vdomain}}
    \\
    %%%%%%%%%
    \Udomain
        \ar[d]^-{\bUcmap}
    &
    \Wdomain
        \ar@{_(->}[l]
        \ar@{=}[rr]
        \ar[d]_-{\bUcmap}
    &&
    \Wdomain
        \ar@{^(->}[r]
        \ar[d]^-{\bVcmap}
    &
    \Vdomain
        \ar[d]_-{\bVcmap}
    \\
    %%%%%%%%%%
    \bR
    &
    \Rpos
        \ar@{_(->}[l]
        \ar[rr]^-{\Btrmap \,=\, \bVcmap\,\circ\,\bUcmap^{-1}}
    &&
    \Rpos
        \ar@{^(->}[r]
    &
    \bR
}
\end{gathered}
\end{equation}
Denote $\hadif=\erestr{\adif}{\Rpos}$ and $\hbdif=\erestr{\bdif}{\Rpos}$.
Then the commutativity of the middle vertical rectangle gives the identity $\Btrmap = \hbdif \circ \Atrmap \circ \hadif^{-1}$ on $\Rpos$ required in~\ref{enum:lm:diffs_of_Y:UV:gb__b_ga_ainv}.

Notice also that $\dif$ is a $\Ck$-diffeomorphism iff $\adif,\bdif$ are $\Ck$-diffeomorphisms of $\bR$.
In that case $\adif,\bdif\in\DRplz$, and therefore $\hadif,\hbdif\in\DExtRneg$.
This proves the implication \ref{enum:lm:diffs_of_Y:UV:h}$\Rightarrow$\ref{enum:lm:diffs_of_Y:UV:gb__b_ga_ainv}.

Conversely, suppose $\Btrmap = \hbdif \circ \Atrmap \circ \hadif^{-1}$ for certain $\hadif,\hbdif\in\DExtRneg$, so $\hadif=\erestr{\adif}{\Rpos}$ and $\hbdif=\erestr{\bdif}{\Rpos}$ for some $\adif,\bdif\in\DRplz$.
One easily checks that then the following map
\begin{equation}\label{equ:h_Y_Y__reconstr_via_a_b}
\dif\colon\Ylet\to\Ylet,
\qquad
\dif(\px) =
\begin{cases}
\bUcmap^{-1}\circ\adif\circ\aUcmap(\px), & \px\in\Udomain, \\
\bVcmap^{-1}\circ\bdif\circ\aVcmap(\px), & \px\in\Vdomain.
\end{cases}
\end{equation}
is a well-defined homeomorphism of $\Ylet$ leaving invariant both $\Udomain$ and $\Vdomain$.
Moreover, its respective coordinate representations are $\adif$ and $\bdif$, whence $\dif$ is a $\Ck$-diffeomorphism of $\Ylet$.
This proves the inverse implication \ref{enum:lm:diffs_of_Y:UV:gb__b_ga_ainv}$\Rightarrow$\ref{enum:lm:diffs_of_Y:UV:h}.

\ref{enum:lm:diffs_of_Y:VU}
This part is similar.
Let $\dif\colon\Ylet\to\Ylet$ be a homeomorphism which exchanges $\Udomain$ and $\Vdomain$, i.e.\ $\dif(\Udomain)=\Vdomain$ and $\dif(\Vdomain)=\Udomain$.
Then we have the following commutative diagram in which the last two rows are reversed in comparison with~\eqref{equ:h_UU_VV__Y}:
\begin{equation}\label{equ:h_UV_VU__Y}
\begin{gathered}
\xymatrix@C=2.1em@R=1.5em{
    \bR
        \ar@{-->}@/_4ex/[ddd]_-{\adif}
    &
    \Rpos
        \ar@{_(->}[l]
        \ar[rr]^-{\Atrmap = \aVcmap \circ \aUcmap^{-1}}
        \ar@{-->}@/^2.5ex/[ddd]^-{\adif}
    &&
    \Rpos
        \ar@{^(->}[r]
        \ar@{-->}@/_2.5ex/[ddd]_-{\bdif}
    &
    \bR
        \ar@{-->}@/^4ex/[ddd]^-{\bdif}
    \\
    %%%%%%
    \Udomain
        \ar[u]_-{\aUcmap}
        \ar[d]_-{\dif}
    &
    \Wdomain
        \ar@{_(->}[l]
        \ar@{=}[rr]
        \ar[u]^-{\aUcmap}
        \ar[d]_-{\dif}
    &&
    \Wdomain
        \ar@{^(->}[r]
        \ar[u]_-{\aVcmap}
        \ar[d]^-{\dif}
    &
    \Vdomain
        \ar[u]^-{\aVcmap}
        \ar[d]^-{\dif}
    \\
    %%%%%%%%%
    \Vdomain
        \ar[d]^-{\bVcmap}
    &
    \Wdomain
        \ar@{_(->}[l]
        \ar@{=}[rr]
        \ar[d]_-{\bVcmap}
    &&
    \Wdomain
        \ar@{^(->}[r]
        \ar[d]^-{\bUcmap}
    &
    \Udomain
        \ar[d]_-{\bUcmap}
    \\
    %%%%%%%%%%
    \bR
    &
    \Rpos
        \ar@{_(->}[l]
    &&
    \Rpos
        \ar@{^(->}[r]
        \ar[ll]_-{\, \Btrmap = \bVcmap\circ\bUcmap^{-1}}
    &
    \bR
}
\end{gathered}
\end{equation}
Here $\adif=\bVcmap\circ\dif\circ\aUcmap^{-1}$ is a coordinate representation of $\dif$ with respect to the charts $(\Udomain,\aUcmap)$ and $(\Vdomain,\bVcmap)$, while $\bdif=\bUcmap\circ\dif\circ\aVcmap^{-1}$ is a coordinate representation of $\dif$ with respect to the charts $(\Vdomain,\aVcmap)$ and $(\Udomain,\bUcmap)$.
They belong to $\HRplz$, and if we denote $\hadif = \erestr{\adif}{\Rpos}$ and $\hbdif = \erestr{\bdif}{\Rpos}$, then the commutativity of the middle vertical rectangle of~\eqref{equ:h_UV_VU__Y} means the identity $\Btrmap^{-1} = \hbdif \circ \Atrmap \circ \hadif^{-1}$ from~\ref{enum:lm:diffs_of_Y:VU:gbinv__b_ga_ainv}.

Again, $\dif$ is a $\Ck$-diffeomorphism iff $\adif$ and $\bdif$ are diffeomorphisms of $\bR$.
In that case $\hadif, \hbdif \in \DExtRneg$, which proves the implication \ref{enum:lm:diffs_of_Y:VU:h}$\Rightarrow$\ref{enum:lm:diffs_of_Y:VU:gbinv__b_ga_ainv}.
The proof of the inverse implication is similar to~\ref{enum:lm:diffs_of_Y:UV} and we leave it for the reader.
\end{proof}

As a consequence we get an explicit description of diffeomorphism groups between $\Ck$-structures on $\Ylet$ and a classification of such structures.

\begin{subcorollary}
\newcommand\DYAB{\XDIFF{\kk}{}{\mpair{\Ylet}{\Aatlas},\mpair{\Ylet}{\Batlas}}}
\newcommand\DYAA{\XDIFF{\kk}{}{\mpair{\Ylet}{\Aatlas}}}
Consider the following subset of $\DRplz\wr\bZ_{2}$:
\[
    E = \{ (\adif,\bdif,\delta) \in \DRplz\wr\bZ_{2} \mid
    \Btrmap = (\erestr{\bdif}{\Rpos} \circ \Atrmap \circ \erestr{\adif^{-1}}{\Rpos})^{\delta}
    \}.
\]
Then the correspondence
$\eta\colon\DYAB\to E$,
\begin{equation}\label{equ:bijection_DiffYAYB_E}
    \eta(\dif) =
    \begin{cases}
    \bigl(
        \bUcmap\circ\erestr{\dif}{\Udomain}\circ\aUcmap^{-1}, \
        \bVcmap\circ\erestr{\dif}{\Vdomain}\circ\aVcmap^{-1}, \
        1
    \bigr), &
    \text{if} \
    \dif(\Udomain)=\Udomain,
    \\
    \bigl(
        \bVcmap\circ\erestr{\dif}{\Udomain}\circ\aUcmap^{-1}, \
        \bUcmap\circ\erestr{\dif}{\Vdomain}\circ\aVcmap^{-1}, \
        -1
    \bigr), &
    \text{if} \
    \dif(\Udomain)=\Vdomain,
    \end{cases}
\end{equation}
associating to each $\dif\in\DYAB$ the pair of its coordinate representations together with a sign corresponding to the permutation of $\Udomain$ and $\Vdomain$ is a bijection.

If $\Aatlas=\Batlas$, then $E$ is a subgroup of $\DRplz\wr\bZ_{2}$ and the bijection~\eqref{equ:bijection_DiffYAYB_E}
\begin{equation}\label{equ:bijection_DiffYAYA_E}
    \eta(\dif) =
    \begin{cases}
    \bigl(
        \aUcmap\circ\erestr{\dif}{\Udomain}\circ\aUcmap^{-1}, \
        \aVcmap\circ\erestr{\dif}{\Vdomain}\circ\aVcmap^{-1}, \
        1
    \bigr), &
    \text{if} \
    \dif(\Udomain)=\Udomain,
    \\
    \bigl(
        \aVcmap\circ\erestr{\dif}{\Udomain}\circ\aUcmap^{-1}, \
        \aUcmap\circ\erestr{\dif}{\Vdomain}\circ\aVcmap^{-1}, \
        -1
    \bigr), &
    \text{if} \
    \dif(\Udomain)=\Vdomain,
    \end{cases}
\end{equation}
becomes an isomorphism of groups $\eta\colon\DYAA\to E$.
\qed
\end{subcorollary}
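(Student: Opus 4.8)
The plan is to deduce the entire statement from Lemma~\ref{lm:diffs_of_Y}, proving separately that $\eta$ is a bijection and, in the case $\Aatlas=\Batlas$, that it is an isomorphism of groups. Throughout I use the fact, recorded before the lemma, that every homeomorphism of $\Ylet$ either preserves both $\Udomain$ and $\Vdomain$ or interchanges them; this makes the two branches of~\eqref{equ:bijection_DiffYAYB_E} exhaustive and the sign $\delta$ unambiguous.

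First I would check that $\eta$ is well defined, i.e.\ lands in $E$. Take $\dif\in\XDIFF{\kk}{}{\mpair{\Ylet}{\Aatlas},\mpair{\Ylet}{\Batlas}}$. If $\dif(\Udomain)=\Udomain$, its two entries $\adif=\bUcmap\circ\erestr{\dif}{\Udomain}\circ\aUcmap^{-1}$ and $\bdif=\bVcmap\circ\erestr{\dif}{\Vdomain}\circ\aVcmap^{-1}$ are exactly the coordinate representations appearing in diagram~\eqref{equ:h_UU_VV__Y}; since $\dif$ is a $\Ck$-diffeomorphism and all charts send $\Wdomain$ onto $\Rpos$, they are orientation-preserving $\Ck$-diffeomorphisms of $\bR$ fixing $0$, i.e.\ lie in $\DRplz$, and the commutativity of the middle rectangle of that diagram is the defining relation of $E$ with $\delta=1$ --- this is precisely implication \ref{enum:lm:diffs_of_Y:UV:h}$\Rightarrow$\ref{enum:lm:diffs_of_Y:UV:gb__b_ga_ainv}. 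The branch $\dif(\Udomain)=\Vdomain$ is identical, using diagram~\eqref{equ:h_UV_VU__Y} and \ref{enum:lm:diffs_of_Y:VU:h}$\Rightarrow$\ref{enum:lm:diffs_of_Y:VU:gbinv__b_ga_ainv}, and yields $\delta=-1$. Since $\erestr{\adif^{-1}}{\Rpos}=(\erestr{\adif}{\Rpos})^{-1}$, the relation built into $E$ reads as condition \ref{enum:lm:diffs_of_Y:UV:gb__b_ga_ainv} when $\delta=1$ and as \ref{enum:lm:diffs_of_Y:VU:gbinv__b_ga_ainv} when $\delta=-1$, so $\eta(\dif)\in E$ in both cases.

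Surjectivity is the converse reading of the same equivalences: starting from $(\adif,\bdif,\delta)\in E$, the defining relation of $E$ is condition \ref{enum:lm:diffs_of_Y:UV:gb__b_ga_ainv} (if $\delta=1$) or \ref{enum:lm:diffs_of_Y:VU:gbinv__b_ga_ainv} (if $\delta=-1$), and the corresponding converse implications of Lemma~\ref{lm:diffs_of_Y} produce a $\Ck$-diffeomorphism $\dif$ with exactly these coordinate representations via the explicit gluing formula~\eqref{equ:h_Y_Y__reconstr_via_a_b} (and its swapping analogue); the relation in $E$ is what guarantees that this $\dif$ is well defined across $\Wdomain$. For injectivity I would observe that~\eqref{equ:h_Y_Y__reconstr_via_a_b} also reconstructs $\dif$ from the data $(\adif,\bdif,\delta)$: on $\Udomain$ it gives $\dif=\bUcmap^{-1}\circ\adif\circ\aUcmap$ (respectively $\bVcmap^{-1}\circ\adif\circ\aUcmap$ when $\delta=-1$) and similarly on $\Vdomain$, so $\dif$ is determined on $\Udomain\cup\Vdomain=\Ylet$ and $\eta$ is injective. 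This establishes the first bijection.

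For the final assertion set $\Aatlas=\Batlas$. Since $\eta$ is already a bijection onto $E$, it is enough to prove that $\eta$ is a group homomorphism into $\DRplz\wr\bZ_{2}$: its image $E$ is then automatically a subgroup and $\eta$ an isomorphism. The plan is a four-case computation according to whether each of $\dif_{1},\dif_{2}$ preserves or interchanges $\Udomain,\Vdomain$. In each case one expands $\erestr{(\dif_{1}\circ\dif_{2})}{\Udomain}$ and $\erestr{(\dif_{1}\circ\dif_{2})}{\Vdomain}$ as composites of restrictions of $\dif_{1}$ and $\dif_{2}$ --- noting that when $\dif_{2}$ interchanges the two sets it routes $\Udomain$ through $\Vdomain$, so that the $\Udomain$-restriction of $\dif_{2}$ is followed by the $\Vdomain$-restriction of $\dif_{1}$ --- and then inserts $\aUcmap^{-1}\aUcmap$ or $\aVcmap^{-1}\aVcmap$ to split each conjugate into a product of two coordinate representations. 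The main obstacle, and the only genuinely computational point, is to match these four outcomes against the two defining rules of the wreath product recalled in the introduction: one must verify that the signs multiply as $\delta_{1}\delta_{2}$ and that in the mixed cases the $\bZ_{2}$-twist interchanges the two base factors exactly as the group law prescribes, paying particular attention to the order of composition in the second base coordinate. Once this is checked, $\eta$ is a homomorphism, and therefore the isomorphism $\eta\colon\XDIFF{\kk}{}{\mpair{\Ylet}{\Aatlas}}\to E$ follows.
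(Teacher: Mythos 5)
Your proposal is correct and follows the same route the paper intends: the paper states this corollary without any proof, presenting it as an immediate repackaging of Lemma~\ref{lm:diffs_of_Y}, and your argument --- membership in $E$ via diagrams~\eqref{equ:h_UU_VV__Y} and~\eqref{equ:h_UV_VU__Y}, surjectivity and injectivity via the reconstruction formula~\eqref{equ:h_Y_Y__reconstr_via_a_b} and its swapped analogue, then a four-case homomorphism check --- is precisely that deduction written out.

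One caution about the computational point you single out as the crux. Carrying out the case $\delta_1=\delta_2=+1$ gives $\eta(\dif_1\circ\dif_2)=(c\circ a,\ d\circ b,\ +1)$, whereas the paper's stated wreath-product rule $(c,d,\delta)(a,b,+1):=(ca,bd,\delta)$ would demand $b\circ d$ in the second coordinate; the mixed case $\delta_1=-1$, $\delta_2=+1$ shows the same discrepancy. This is a typo in the paper's definition of the wreath product, not a failure of your plan: as literally written that multiplication is not associative (test a triple with signs $+1,+1,-1$) and is not compatible with the stated left action $(a,b,\delta)\cdot g=(bga^{-1})^{\delta}$, both of which force the rules $(c,d,\delta)(a,b,+1)=(ca,db,\delta)$ and $(c,d,\delta)(a,b,-1)=(da,cb,-\delta)$. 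With this corrected rule all four of your cases match, the signs multiply as $\delta_1\delta_2$, and $\eta$ is a homomorphism, hence an isomorphism onto its image $E$, as claimed.
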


\begin{proof}[Proof of Theorem~\ref{th:Ck_str_on_Y}]
To simplify notations denote $\calE:=\DExtRneg$.
For a maximal $\Ck$-atlas $\Astruct$ (i.e.\ a $\Ck$-structure) on $\Ylet$ let
\begin{itemize}[leftmargin=*]
\item $[\Astruct]$ be its equivalence class in $\mathfrak{C}^{\kk}(\Ylet) / \Homeo(\Ylet)$, i.e.\ the set of all $\Ck$-structures on $\Ylet$ being $\Ck$-diffeomorphic to $\mpair{\Ylet}{\Astruct}$ via a $\Ck$-diffeomorphism;

\item and $[\Astruct]'$ be its equivalence class in $\mathfrak{C}^{\kk}(\Ylet) / \Homeo(\Ylet,\Udomain,\Vdomain)$, i.e.\ the set of all $\Ck$-structures on $\Ylet$ being $\Ck$-diffeomorphic to $\mpair{\Ylet}{\Astruct}$ via a $\Ck$-diffeomorphism leaving invariant $\Udomain$ and $\Vdomain$.
\end{itemize}

Now let $\Astruct$ and $\Bstruct$ be two maximal $\Ck$-atlases on $\Ylet$,
\begin{align*}
    \Aatlas&=\{(\Udomain,\aUcmap),(\Vdomain,\aVcmap)\} \subset \Astruct, &
    \Batlas&=\{(\Udomain,\bUcmap),(\Vdomain,\bVcmap)\} \subset \Bstruct
\end{align*}
be some minimal subatlases, and $\Atrmap=\aVcmap\circ\aUcmap^{-1}, \Btrmap=\bVcmap\circ\bUcmap^{-1} \in \DRpos$ be their respective transition maps.

1) Suppose there exists a homeomorphism $\dif$ of $\Ylet$ leaving invariant $\Udomain$ and $\Vdomain$ and being also a $\Ck$-diffeomorphism $\dif\colon\mpair{\Ylet}{\Astruct}\to\mpair{\Ylet}{\Bstruct}$.
Then it is also a $\Ck$-diffeomorphism $\dif\colon\mpair{\Ylet}{\Aatlas}\to\mpair{\Ylet}{\Batlas}$, whence, by the implication \ref{enum:lm:diffs_of_Y:UV:h}$\Rightarrow$\ref{enum:lm:diffs_of_Y:UV:gb__b_ga_ainv} of Lemma~\ref{lm:diffs_of_Y}\ref{enum:lm:diffs_of_Y:UV:gb__b_ga_ainv}, $\Btrmap = \hbdif\circ \Atrmap \circ \hadif^{-1}$ for some $\hadif,\hbdif\in\calE$.
Therefore $\Btrmap$ and $\Atrmap$ belong to the same $\calE$-double coset.
Hence that coset depends on $\Astruct$ only and we will denote it by $\gdif_{\Astruct}'$.
Thus we get a well-defined correspondence
\[
   \mathfrak{C}^{\kk}(\Ylet) / \Homeo(\Ylet,\Udomain,\Vdomain)
   \ \ni \ [\Astruct]' \  \xrightarrow{~~~\mu'~~~} \ \gdif'_{\Astruct} \ \in \
   \dbl{\calE}{\DRpos}{\calE}.
\]
Note that if $\Btrmap = \hbdif\circ \Atrmap \circ \hadif^{-1}$ for some $\hadif,\hbdif\in\calE$, then by the inverse implication~\ref{enum:lm:diffs_of_Y:UV:gb__b_ga_ainv}$\Rightarrow$\ref{enum:lm:diffs_of_Y:UV:h} the structures $\Astruct$ and $\Bstruct$ are $\Ck$-diffeomorphic, i.e.\ $\mu'$ is injective.

Moreover, by Lemma~\ref{lm:min_atlas_realization_on_Y}, every $\gdif\in\DRpos$ is a transition map for some minimal $\Ck$-atlas on $\Ylet$, which means that $\mu'$ is surjective as well.

2) Suppose there exists a homeomorphism $\dif$ of $\Ylet$ being a $\Ck$-diffeomorphism $\dif\colon\mpair{\Ylet}{\Astruct}\to\mpair{\Ylet}{\Bstruct}$.
Then by the implications~\ref{enum:lm:diffs_of_Y:UV:h}$\Rightarrow$\ref{enum:lm:diffs_of_Y:UV:gb__b_ga_ainv} and~\ref{enum:lm:diffs_of_Y:VU:h}$\Rightarrow$\ref{enum:lm:diffs_of_Y:VU:gbinv__b_ga_ainv} of Lemma~\ref{lm:diffs_of_Y} there exist $\hadif,\hbdif\in\calE$ such that $\hbdif\circ \Atrmap \circ \hadif^{-1}$ coincides either with $\Btrmap$ or with $\Btrmap^{-1}$.
Hence $\Btrmap$ and $\Atrmap$ belong to the same $(\calE,\pm)$-double coset, which we will denote by $\gdif_{\Astruct}$.
In particular, we obtain a well-defined correspondence
\[
   \mathfrak{C}^{\kk}(\Ylet) / \Homeo(\Ylet)
   \ \ni \ [\Astruct] \  \xrightarrow{~~~\mu~~~}  \ \gdif_{\Astruct} \ \in \
   \dbli{\calE}{\DRpos}.
\]

Then again the inverse implications~\ref{enum:lm:diffs_of_Y:UV:gb__b_ga_ainv}$\Rightarrow$\ref{enum:lm:diffs_of_Y:UV:h} and~\ref{enum:lm:diffs_of_Y:VU:gbinv__b_ga_ainv}$\Rightarrow$\ref{enum:lm:diffs_of_Y:VU:h} guarantees that $\mu$ is injective, while Lemma~\ref{lm:min_atlas_realization_on_Y} implies surjectivity of $\mu$.
\end{proof}

\subsection{Examples of $\Ck$-structures on $\Ylet$}
In this subsection we show that $\Ylet$ admits a continuum of pairwise non-diffeomorphic $\Ck$-structures for all $\kk=1,\ldots,\infty$.
Moreover, there are diffeomorphic $\Ck$-structures all diffeomorphisms between them always sent $\Udomain$ to $\Udomain$.
Also there are diffeomorphic $\Ck$-structures all diffeomorphisms between them always sent $\Udomain$ to $\Vdomain$.

Our first statement is a characterization of $\Ck$-structures on $\Ylet$ diffeomorphic to the canonical one which is defined by the canonical atlas $\CanonAtlas{\Ylet}=\{ (\Udomain,\canUcmap), (\Vdomain,\canVcmap) \}$, see~\eqref{equ:canonical_atlas_on_Y}.

\begin{sublemma}\label{lm:Y:diff_to_canonical_structure}
Let $\Aatlas=\{(\Udomain,\Ucmap),(\Vdomain,\Vcmap)\}$ be any minimal $\Ck$-atlas on $\Ylet$.
Then $\mpair{\Ylet}{\Aatlas}$ is $\Ck$-diffeomorphic to $\mpair{\Ylet}{\CanonAtlas{\Ylet}}$ iff the transition map $\Atrmap=\Vcmap\circ\Ucmap^{-1} \in \DExtRneg$, i.e.\ it extends to a diffeomorphism of all $\bR$.
\end{sublemma}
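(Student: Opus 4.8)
The plan is to deduce both directions directly from the classification already established, rather than building a diffeomorphism by hand. Recall that the transition map of the canonical atlas $\CanonAtlas{\Ylet}$ is $\id_{\Rpos}$, which extends to $\id_{\bR}$ and hence lies in $\calE := \DExtRneg$. By Theorem~\ref{th:Ck_str_on_Y} the structures $\mpair{\Ylet}{\Aatlas}$ and $\mpair{\Ylet}{\CanonAtlas{\Ylet}}$ are $\Ck$-diffeomorphic if and only if their transition maps $\Atrmap$ and $\id_{\Rpos}$ belong to the same $(\calE,\pm)$-double coset in $\dbli{\calE}{\DRpos}$. So the whole statement reduces to computing this one distinguished double coset.

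First I would note that $\calE$ is a \emph{subgroup} of $\DRpos$, being the image $\rho(\DRplz)$ of the restriction homomorphism, and in particular is closed under composition and inversion. The $(\calE,\pm)$-double coset of $\id_{\Rpos}$ consists of all $(\hbdif\circ\id_{\Rpos}\circ\hadif^{-1})^{\delta} = (\hbdif\circ\hadif^{-1})^{\delta}$ with $\hadif,\hbdif\in\calE$ and $\delta\in\{\pm1\}$; equivalently it is $\calE\,\id_{\Rpos}\,\calE \cup \calE\,\id_{\Rpos}^{-1}\,\calE$. Since $\calE$ is a group, $\hbdif\circ\hadif^{-1}$ already ranges over all of $\calE$, and the orientation-reversing case $\delta=-1$ only permutes $\calE$ among itself. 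Hence the double coset of $\id_{\Rpos}$ is exactly $\calE = \DExtRneg$, so $\Atrmap$ lies in it precisely when $\Atrmap\in\DExtRneg$, which is the assertion.

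Alternatively, and perhaps more transparently, I would apply Lemma~\ref{lm:diffs_of_Y} directly with $\Batlas=\CanonAtlas{\Ylet}$, so that $\Btrmap=\id_{\Rpos}$: part~\ref{enum:lm:diffs_of_Y:UV} gives a diffeomorphism fixing $\Udomain,\Vdomain$ iff $\id_{\Rpos}=\hbdif\circ\Atrmap\circ\hadif^{-1}$, while part~\ref{enum:lm:diffs_of_Y:VU} gives one exchanging them iff $\id_{\Rpos}^{-1}=\hbdif\circ\Atrmap\circ\hadif^{-1}$, and since $\id_{\Rpos}^{-1}=\id_{\Rpos}$ these coincide and rearrange to $\Atrmap=\hbdif^{-1}\circ\hadif\in\calE$; conversely $\Atrmap\in\calE$ is witnessed by $\hadif=\Atrmap$, $\hbdif=\id_{\Rpos}$. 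I do not expect a genuine obstacle here: the only point worth stating explicitly is that $\calE$ is a group, which collapses the double coset of the identity to $\calE$ itself, with the $\delta=-1$ case adding nothing precisely because $\id_{\Rpos}$ is its own inverse.
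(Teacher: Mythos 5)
Your proof is correct and takes essentially the same route as the paper: the paper likewise deduces necessity from Lemma~\ref{lm:diffs_of_Y} (obtaining $\Atrmap = (\bdif\circ\id_{\Rpos}\circ\adif^{-1})^{\delta}\in\DExtRneg$ since $\DExtRneg$ is a group), and proves sufficiency by choosing the witnesses $\adif=\id_{\bR}$ and $\bdif$ an extension of $\Atrmap$, which is exactly the reverse implication of part~\ref{enum:lm:diffs_of_Y:UV} of that lemma that you invoke, only written out via the explicit formula~\eqref{equ:h_Y_Y__reconstr_via_a_b}. Your observation that the $(\calE,\pm)$-double coset of $\id_{\Rpos}$ collapses to $\calE$ because $\calE$ is a group and $\id_{\Rpos}$ is its own inverse is precisely the algebraic content of the paper's argument.
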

\begin{proof}
Recall that the transition map of $\CanonAtlas{\Ylet}$ is $\id_{\Rpos}$.

\emph{Necessity.}
Suppose there is a $\Ck$-diffeomorphism $\dif\colon\mpair{\Ylet}{\CanonAtlas{\Ylet}}\to\mpair{\Ylet}{\Aatlas}$.
Then by Lemma~\ref{lm:diffs_of_Y}, $\Atrmap = (\bdif\circ\id_{\Rpos}\circ\adif^{-1})^{\delta}$ for some $\adif,\bdif\in\DExtRneg$ and $\delta\in\{\pm1\}$.
In particular, $\Atrmap\in\DExtRneg$ as well.

\emph{Sufficiency.}
Suppose $\Atrmap\in\DExtRneg$, so $\Atrmap = \erestr{\bdif}{\Rpos}$ for some $\bdif\in\DRplz$.
Let $\adif = \id_{\bR}$.
Then we can also write $\Atrmap = \erestr{\bdif}{\Rpos} \circ \id_{\bR} \circ \erestr{\adif^{-1}}{\Rpos}$, whence by~\eqref{equ:h_Y_Y__reconstr_via_a_b}, the map $\dif\colon\Ylet\to\Ylet$,
\[
\dif\colon\Ylet\to\Ylet,
\qquad
\dif(\px) =
\begin{cases}
\tilde{\Ucmap}^{-1}\circ\adif\circ\Ucmap(\px), & \px\in\Udomain, \\
\tilde{\Vcmap}^{-1}\circ\bdif\circ\Vcmap(\px), & \px\in\Vdomain
\end{cases}
\]
is a $\Ck$-diffeomorphism $\mpair{\Ylet}{\CanonAtlas{\Ylet}} \to \mpair{\Ylet}{\Aatlas}$.
\end{proof}

\begin{subexample}\rm
\newcommand\jdif[1]{\phi_{#1}}
For every $a>0$ define the following diffeomorphism $\jdif{a}\in\DRpos$, $\jdif{a}(\px)=a\px$.
Let also $\Aatlas$ be a minimal $\Ck$-atlas on $\Ylet$ whose transition map is $\jdif{a}$.
Since $\jdif{a}$ extends to a diffeomorphism of all of $\Ylet$, we have by Lemma~\ref{lm:Y:diff_to_canonical_structure} that $\mpair{\Ylet}{\Aatlas}$ is $\Ck$-diffeomorphic with $\mpair{\Ylet}{\CanonAtlas{\Ylet}}$.
\end{subexample}

\newcommand\jdif[1]{g_{#1}}
\newcommand\vs{s}
\newcommand\vt{t}

\begin{sublemma}\label{lm:x_pow_alpha}
For every $\vs>0$ define the following diffeomorphism $\jdif{\vs}\in\DRpos$, $\jdif{\vs}(\px) = \px^{\vs}$, and for simplicity denote $\calE := \DExtRneg$.
Then for $\vs,\vt>0$ the following statements hold.
\begin{enumerate}[leftmargin=*]
\item\label{enum:lm:x_pow_alpha:EDE}
$\jdif{\vs}$ and $\jdif{\vt}$ belong to the same $\calE$-double coset of $\dbl{\calE}{\DRpos}{\calE}$ if and only if $\vs=\vt$.

\item\label{enum:lm:x_pow_alpha:EDpmE}
$\jdif{\vs}$ and $\jdif{\vt}$ belong to the same $(\calE,\pm)$-double coset of $\dbli{\calE}{\DRpos}$ if and only if either $\vs=\vt$ or $\vs = 1/\vt$.
\end{enumerate}
\end{sublemma}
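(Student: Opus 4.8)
The plan is to attach to each element of $\DRpos$ a numerical invariant read off from its germ at the branch point $0$, show that this invariant is unchanged under pre- and post-composition by elements of $\calE$, and check that it equals $\vs$ on $\jdif{\vs}$. Concretely I would study
$$
\lambda(\gdif)\;=\;\lim_{\px\to 0^{+}}\frac{\log\gdif(\px)}{\log\px},
$$
which exists for every power map and gives $\lambda(\jdif{\vs})=\vs$, since $\log\jdif{\vs}(\px)=\vs\log\px$. Both ``if'' directions are immediate and need no invariant: in~\ref{enum:lm:x_pow_alpha:EDE}, $\vs=\vt$ means literally $\jdif{\vs}=\jdif{\vt}$; and in~\ref{enum:lm:x_pow_alpha:EDpmE}, since $\jdif{\vs}^{-1}=\jdif{1/\vs}$, the map $\jdif{1/\vs}$ already lies in the $(\calE,\pm)$-double coset of $\jdif{\vs}$, so $\vt\in\{\vs,1/\vs\}$ suffices. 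All the work is therefore in the two ``only if'' directions.

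The crucial lemma is that every $\hadif\in\calE$ satisfies $\log\hadif(\px)/\log\px\to 1$ as $\px\to 0^{+}$. To prove it I would use the defining property $\calE=\rho(\DRplz)$: write $\hadif=\erestr{\adif}{\Rpos}$ for some $\adif\in\DRplz$, an orientation-preserving $\Ck$-diffeomorphism of $\bR$ with $\adif(0)=0$. Since $\adif$ is a diffeomorphism, $c:=\adif'(0)>0$, so $\hadif(\px)=c\px+o(\px)$ and hence $\hadif(\px)/\px\to c$. Then
$$
\frac{\log\hadif(\px)}{\log\px}=1+\frac{\log(\hadif(\px)/\px)}{\log\px}\longrightarrow 1,
$$
because the numerator of the last term tends to the finite constant $\log c$ while $\log\px\to-\infty$.

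With the lemma in hand the invariance is a chain-rule factorisation. For~\ref{enum:lm:x_pow_alpha:EDE}, suppose $\jdif{\vt}=\hbdif\circ\jdif{\vs}\circ\hadif^{-1}$ with $\hadif,\hbdif\in\calE$. Putting $\py=\hadif^{-1}(\px)$ and $\pz=\jdif{\vs}(\py)=\py^{\vs}$ (both tending to $0^{+}$), I would write
$$
\vt=\frac{\log\jdif{\vt}(\px)}{\log\px}=\frac{\log\hbdif(\pz)}{\log\pz}\cdot\frac{\log\pz}{\log\py}\cdot\frac{\log\py}{\log\px}.
$$
The middle factor equals $\vs$ identically, while the outer two tend to $1$ by the lemma applied to $\hbdif$ and to $\hadif^{-1}$ (using that $\calE$ is a group). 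Hence $\vt=\vs$. For~\ref{enum:lm:x_pow_alpha:EDpmE}, the same computation applied to the two alternatives $\jdif{\vt}\in\calE\jdif{\vs}\calE$ and $\jdif{\vt}\in\calE\jdif{\vs}^{-1}\calE=\calE\jdif{1/\vs}\calE$ yields $\vt=\vs$ or $\vt=1/\vs$, as required.

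The step I expect to be the main obstacle is the rigorous justification of the crucial lemma---in particular that the extension $\adif$ has a nonzero (hence positive) derivative at $0$, so that the germ of $\hadif$ at $0$ is linear to leading order; everything downstream is a formal limit manipulation. The one genuinely conceptual decision is to extract the invariant at the branch point $0$ rather than at $+\infty$: the membership condition defining $\calE$ constrains the diffeomorphisms precisely near $0$, so that is the only place where an $\calE$-invariant distinguishing the $\jdif{\vs}$ can live.
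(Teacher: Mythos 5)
Your proof is correct, and it takes a route that is related to but genuinely distinct from the paper's. Both arguments rest on the same analytic core: every element of $\calE$ is the restriction of a $\Ck$-diffeomorphism of $\bR$ fixing $0$ with positive derivative there, hence is asymptotically linear at the branch point $0$ (and, as you note, that point is the only place an invariant can live). The paper encodes this via Hadamard's lemma: writing $a^{-1}(x)=x\,\hat{a}(x)$ and $b(x)=x\,\hat{b}(x)$ with $\hat{a},\hat{b}$ continuous and positive at $0$, it multiplies out the composition to show that every element of the double coset of $g_{s}$ has the form $b\circ g_{s}\circ a^{-1}(x)=x^{s}\,\hat{c}(x)$ with $\hat{c}$ continuous at $0$; equating this with $x^{t}$ for $t\neq s$ forces $\hat{c}(x)=x^{t-s}\to+\infty$ as $x\to 0^{+}$, a contradiction. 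You instead package the same asymptotics as the scalar invariant $\lambda(g)=\lim_{x\to 0^{+}}\log g(x)/\log x$, prove $\lambda\equiv 1$ on $\calE$ from the first-order Taylor expansion alone, and conclude by multiplicativity of log-ratios along the composition. Your version buys two things: it avoids Hadamard's lemma entirely (only differentiability of the extension at the single point $0$ is needed, with no smoothness bookkeeping for finite $\kk$), and it isolates an explicit double-coset invariant, so both ``only if'' directions become one-line limit computations. The paper's version buys a slightly more structural statement: the factorization $x^{s}\,\hat{c}(x)$ exhibits the local form of every element of the double coset of $g_{s}$, rather than only the value of a limit. Both proofs dispose of part (2) the same way, reducing to part (1) via $g_{s}^{-1}=g_{1/s}$.
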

\begin{proof}
Note that each $\jdif{\vs}$ continuously extends to $\overline{\Rpos} = [0;+\infty)$ by setting $\jdif{\vs}(0)=0$.

Let us also made the following observation.
Let $\adif,\bdif\in\DRplz$ be any two diffeomorphisms.
Since $\adif(0)=\bdif(0)=0$, we have by Hadamard lemma that $\adif^{-1}(\px) = \px\,\hadif(\px)$ and $\bdif(\px) = \px\,\hbdif(\px)$ for some $\Cr{\kk-1}$ functions $\hadif,\hbdif\colon\bR\to\bR$ such that $\hadif(0) = \adif'(0)>0$ and $\hbdif(0) = \bdif'(0)>0$.
Then on $\overline{\Rpos} = [0;+\infty)$ we have that
\begin{equation}\label{equ:psi_t__b_psi_s__ainv}
\bdif\circ\jdif{\vs}\circ\adif^{-1}(\px)
    =
\bdif(\px^{\vs}\,\hadif(\px)^{\vs})
    =
\px^{\vs}\,
\underbrace{\hadif(\px)^{\vs}\,\hbdif(\px^{\vs}\hadif(\px)^{\vs})}_{\hcdif(\px)}
=
\px^{\vs}\,\hcdif(\px),
\end{equation}
where the function $\hcdif\colon\overline{\Rpos}\to\bR$ is at least continuous.
Now we can prove the lemma.

\ref{enum:lm:x_pow_alpha:EDE}
If $\vs=\vt$, then $\jdif{\vs} = \jdif{\vt}$ and there is nothing to prove.

Conversely, suppose the for $\vs\not=\vt$ the diffeomorphisms $\jdif{\vs}$ and $\jdif{\vt}$ belong to the same $\calE$-double coset of $\dbl{\calE}{\DRpos}{\calE}$, i.e.\ $\jdif{\vt} = \bdif\circ\jdif{\vs}\circ\adif^{-1}$ on $\Rpos$ for some $\adif,\bdif\in\DRplz$ as above.
Not loosing generality, one can assume that $\vs > \vt$.
Then by~\eqref{equ:psi_t__b_psi_s__ainv},
\[
\px^{\vt} = \px^{\vs}\,\hcdif(\px), \quad \px\geq0,
\]
for some continuous function $\hcdif\colon\overline{\Rpos}\to\bR$.
On the other hand, since $\vs>\vt$, the latter identity implies that
\[
    \lim_{\px\to0} \hcdif(\px) = \lim_{\px\to0} \px^{\vt}/\px^{\vs} = +\infty,
\]
which is impossible, as $\hcdif$ is continuous at $0$.
Thus we get a contradiction with the assumption that $\vs\ne\vt$.

Statement~\ref{enum:lm:x_pow_alpha:EDpmE} follows from~\ref{enum:lm:x_pow_alpha:EDE} and the observation that $\jdif{1/\vs} = \jdif{\vs}^{-1}$.
\end{proof}

\begin{subcorollary}\label{cor:nondif_ck_struct_on_Y}
For $\vs>0$ let $\Aatlas_{\vs}$ be any minimal $\Ck$-atlas on $\Ylet$ whose transition map is $\jdif{\vs}$.
Then the following statements hold.
\begin{enumerate}[label={\rm(\arabic*)}]
\item\label{enum:lm:nondif_ck_struct_on_Y:Ys_Yt_non_diff}
If $\vt\not=\vs, 1/\vs$, then $\mpair{\Ylet}{\Aatlas_{\vs}}$ and $\mpair{\Ylet}{\Aatlas_{\vt}}$ are not $\Ck$-diffeomorphic.

\item\label{enum:lm:nondif_ck_struct_on_Y:sne1:Ys_Yt_exch_UV}
For every $\vs\ne1$ the $\Ck$-manifolds $\mpair{\Ylet}{\Aatlas_{\vs}}$ and $\mpair{\Ylet}{\Aatlas_{1/\vs}}$ are  $\Ck$-diffeomorphic, and every $\Ck$-diffeomorphism between them exchanges $\Udomain$ and $\Vdomain$.

\item\label{enum:lm:nondif_ck_struct_on_Y:sne1:Ys_Ys_pres_UV}
For every $\vs\ne1$ every self diffeomorphism of $\mpair{\Ylet}{\Aatlas_{\vs}}$ leaves $\Udomain$ and $\Vdomain$ invariant.

\item\label{enum:lm:nondif_ck_struct_on_Y:s1:canon}
$\mpair{\Ylet}{\Aatlas_{1}}$ is $\Ck$-diffeomorphic to $\mpair{\Ylet}{\CanonAtlas{\Ylet}}$ with the canonical $\Ck$-structure, and it admits a diffeomorphism exchanging $\Udomain$ and $\Vdomain$.
\end{enumerate}
\end{subcorollary}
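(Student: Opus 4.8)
The plan is to reduce every assertion to two facts already in hand, so that the corollary becomes pure bookkeeping. The first is the diffeomorphism criterion of Lemma~\ref{lm:diffs_of_Y}: a $\Ck$-diffeomorphism of two minimal atlases \emph{leaving} $\Udomain,\Vdomain$ invariant exists iff $\Btrmap=\hbdif\circ\Atrmap\circ\hadif^{-1}$ for some $\hadif,\hbdif\in\calE$ (the equivalence \ref{enum:lm:diffs_of_Y:UV:h}$\Leftrightarrow$\ref{enum:lm:diffs_of_Y:UV:gb__b_ga_ainv}), whereas one \emph{exchanging} them exists iff $\Btrmap^{-1}=\hbdif\circ\Atrmap\circ\hadif^{-1}$ (the equivalence \ref{enum:lm:diffs_of_Y:VU:h}$\Leftrightarrow$\ref{enum:lm:diffs_of_Y:VU:gbinv__b_ga_ainv}). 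The second is Lemma~\ref{lm:x_pow_alpha}: two power maps share an $\calE$-double coset iff $s=t$ (part~\ref{enum:lm:x_pow_alpha:EDE}), and a $(\calE,\pm)$-double coset iff $s=t$ or $s=1/t$ (part~\ref{enum:lm:x_pow_alpha:EDpmE}). Since both criteria see only the transition maps $g_s$, the statements are insensitive to which minimal atlas represents $\Aatlas_{s}$, so the word ``any'' in the hypothesis is harmless. Throughout I will use the dichotomy recalled in Section~\ref{sect:diff_struct_on_Y}---every homeomorphism of $\Ylet$ either preserves or exchanges $\Udomain,\Vdomain$---together with the elementary identities $g_s^{-1}=g_{1/s}$ and ``$s=1/s$ with $s>0$ implies $s=1$''.

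For part~\ref{enum:lm:nondif_ck_struct_on_Y:Ys_Yt_non_diff} I would argue by contradiction. A $\Ck$-diffeomorphism $\mpair{\Ylet}{\Aatlas_{s}}\to\mpair{\Ylet}{\Aatlas_{t}}$ either preserves or exchanges $\Udomain,\Vdomain$, so by the two equivalences above $g_s$ and $g_t$ would satisfy $g_t=\hbdif\circ g_s\circ\hadif^{-1}$ or $g_t^{-1}=\hbdif\circ g_s\circ\hadif^{-1}$, i.e.\ they would lie in one $(\calE,\pm)$-double coset; Lemma~\ref{lm:x_pow_alpha}\ref{enum:lm:x_pow_alpha:EDpmE} then forces $t=s$ or $t=1/s$, against the hypothesis. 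For part~\ref{enum:lm:nondif_ck_struct_on_Y:sne1:Ys_Yt_exch_UV}, taking $\Aatlas=\Aatlas_{s}$ and $\Batlas=\Aatlas_{1/s}$, so that $\Atrmap=g_s$ and $\Btrmap=g_{1/s}$, the identity $g_{1/s}^{-1}=g_s$ reads $\Btrmap^{-1}=\id\circ\Atrmap\circ\id^{-1}$, and condition~\ref{enum:lm:diffs_of_Y:VU:gbinv__b_ga_ainv} produces a $\Ck$-diffeomorphism exchanging $\Udomain,\Vdomain$; conversely a diffeomorphism \emph{preserving} them would, by \ref{enum:lm:diffs_of_Y:UV:gb__b_ga_ainv}, place $g_s$ and $g_{1/s}$ in one $\calE$-double coset, whence $s=1/s$ and $s=1$, excluded---so by the dichotomy every diffeomorphism exchanges them. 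Part~\ref{enum:lm:nondif_ck_struct_on_Y:sne1:Ys_Ys_pres_UV} applies the same mechanism to the single structure $\Aatlas_{s}$: a self-diffeomorphism exchanging $\Udomain,\Vdomain$ would give $g_s^{-1}=\hbdif\circ g_s\circ\hadif^{-1}$, i.e.\ $g_s$ and $g_{1/s}$ in a common $\calE$-double coset, again forcing $s=1$; being excluded, every self-diffeomorphism must preserve $\Udomain,\Vdomain$.

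For part~\ref{enum:lm:nondif_ck_struct_on_Y:s1:canon}, the transition map is $g_1=\id_{\Rpos}$, which lies in $\calE$ because it extends to $\id_{\bR}$; hence Lemma~\ref{lm:Y:diff_to_canonical_structure} gives directly a $\Ck$-diffeomorphism $\mpair{\Ylet}{\Aatlas_{1}}\to\mpair{\Ylet}{\CanonAtlas{\Ylet}}$. Moreover $g_1^{-1}=\id_{\Rpos}=g_1$, so with $\Aatlas=\Batlas=\Aatlas_{1}$ the hypothesis $\Btrmap^{-1}=\id\circ\Atrmap\circ\id^{-1}$ of \ref{enum:lm:diffs_of_Y:VU:gbinv__b_ga_ainv} is satisfied, producing a self-diffeomorphism that exchanges $\Udomain,\Vdomain$ (one may also exhibit the obvious swap $(x,0)\leftrightarrow(x,1)$, which is a diffeomorphism of the canonical structure).

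I do not anticipate a real obstacle: all the analytic substance resides in the Hadamard-lemma estimate of Lemma~\ref{lm:x_pow_alpha} and the diagram chases of Lemma~\ref{lm:diffs_of_Y}, which are already established. The only step requiring attention is the passage from ``no diffeomorphism preserves (respectively exchanges)'' to ``\emph{every} diffeomorphism exchanges (respectively preserves)'', which is exactly where the preserve-or-exchange dichotomy is invoked, together with the bookkeeping of which transition map plays the role of $\Atrmap$ versus $\Btrmap$ when applying the two equivalences of Lemma~\ref{lm:diffs_of_Y}.
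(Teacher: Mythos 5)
Your proposal is correct and takes essentially the same route as the paper: every part is reduced to the preserve-or-exchange criteria of Lemma~\ref{lm:diffs_of_Y} combined with the double-coset computation of Lemma~\ref{lm:x_pow_alpha}, with Lemma~\ref{lm:Y:diff_to_canonical_structure} handling part~(4). Your write-up is even slightly more explicit than the paper's (e.g.\ exhibiting the exchanging diffeomorphisms in parts~(2) and~(4) via the trivial solution $\hadif=\hbdif=\id_{\Rpos}$ of condition \ref{enum:lm:diffs_of_Y:VU:gbinv__b_ga_ainv}, and citing the correct lemma for part~(4)), but the underlying argument is identical.
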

\begin{proof}
\ref{enum:lm:nondif_ck_struct_on_Y:Ys_Yt_non_diff}
By Lemma~\ref{lm:diffs_of_Y}, $\mpair{\Ylet}{\Aatlas_{\vs}}$ and $\mpair{\Ylet}{\Aatlas_{\vt}}$ are $\Ck$-diffeomorphic iff the corresponding transition maps $\jdif{\vs}$ and $\jdif{\vt}$ belongs the same $(\calE,\pm)$-double coset $\dbli{\calE}{\DRpos}$.
But by Lemma~\ref{lm:x_pow_alpha}\ref{enum:lm:x_pow_alpha:EDE} this is possible iff either $\vt=\vs$ or $\vt=1/\vs$.

\ref{enum:lm:nondif_ck_struct_on_Y:sne1:Ys_Yt_exch_UV}
Let $\vs\ne1$.
Then, as shown in Lemma~\ref{lm:x_pow_alpha}, the identity $\jdif{1/\vs} = (\bdif\circ\jdif{\vs}\circ\adif)^{\delta}$ for some $\adif,\bdif\in\DRplz$ and $\delta\in\{\pm1\}$ is possible only if $\delta=-1$.
Hence by \ref{enum:lm:diffs_of_Y:VU} of Lemma~\ref{lm:diffs_of_Y} every $\Ck$-diffeomorphism between $\mpair{\Ylet}{\Aatlas_{\vs}}$ and $\mpair{\Ylet}{\Aatlas_{1/\vs}}$ exchanges $\Udomain$ and $\Vdomain$.

\ref{enum:lm:nondif_ck_struct_on_Y:sne1:Ys_Ys_pres_UV}
Similarly, again by Lemma~\ref{lm:x_pow_alpha}, for $\vs\ne1$ the identity $\jdif{\vs} = (\bdif\circ\jdif{\vs}\circ\adif)^{\delta}$ for some $\adif,\bdif\in\DRplz$ and $\delta\in\{\pm1\}$ is possible only if $\delta=+1$.
Hence by \ref{enum:lm:diffs_of_Y:UV} of Lemma~\ref{lm:diffs_of_Y} every $\Ck$-diffeomorphism of $\mpair{\Ylet}{\Aatlas_{\vs}}$ leaves invariant $\Udomain$ and $\Vdomain$.

\ref{enum:lm:nondif_ck_struct_on_Y:s1:canon}
This statement follows from the observation that $\jdif{1} = \id_{\Rpos}$ and Lemma~\ref{lm:x_pow_alpha}.
\end{proof}

\begin{subremark}\rm
The proof of Theorem~\ref{lm:diffs_of_Y} literally repeats the arguments of the classification of $\Ck$-structures on the line with two origins $\DLine$.
One easily sees that the arguments consist of checking a certain commutative diagram.
In next section we will apply them in an essentially more general context.
\end{subremark}

\section{Atlases with two charts}
\label{sect:atlases_with_two_charts}
\newcommand\CkAUV{\mathfrak{C}^{\kk}(\Aspace,\Uatlas,\Vatlas)}

In this section we will reformulate certain properties of $\Ylet$ under more general settings.

\subsection{$\UVAtlas$-atlases}
Let $\mpair{\Aspace}{\Latlas}$ be a $\Ck$-manifold, and $\Udomain,\Vdomain\subset\Aspace$ be two open subsets such that $\Aspace = \Udomain\cup\Vdomain$.
Put $\Wdomain = \Udomain\cap\Vdomain$.
Consider the restrictions of the atlas $\Latlas$ to those open sets:
\begin{align*}
    \Uatlas &= \AtlasRestr{\Latlas}{\Udomain}, &
    \Vatlas &= \AtlasRestr{\Latlas}{\Vdomain}, &
    \Watlas &= \AtlasRestr{\Latlas}{\Wdomain}.
\end{align*}
Then $\Uatlas$ and $\Vatlas$ are $\Ck$-compatible partial atlases on $\Aspace$ such that their union $\Uatlas\cup\Vatlas$ is a $\Ck$-atlas on $\Aspace$ being $\Ck$-compatible with $\Latlas$.

We are interested in the following problem: \term{to what extent the \term{$\Ck$-manifolds} $\mpair{\Udomain}{\Uatlas}$ and $\mpair{\Vdomain}{\Vatlas}$ determine $\Ck$-structures on $\Aspace$?}

More precisely, suppose we are given two homeomorphisms
$\Ucmap\colon\Udomain\to\Udomain$ and $\Vcmap\colon\Vdomain\to\Vdomain$ such that $\Ucmap(\Wdomain)=\Wdomain=\Vcmap(\Wdomain)$.
They can be regarded as $\Ck$-diffeomorphisms $\Ucmap\colon\mpair{\Udomain}{\indatl{(\Ucmap^{-1})}{\Uatlas}}\to\mpair{\Udomain}{\Uatlas}$ and $\Vcmap\colon\mpair{\Vdomain}{\indatl{(\Vcmap^{-1})}{\Vatlas}}\to\mpair{\Vdomain}{\Vatlas}$ with respect to the induced ``pull back'' atlases $\indatl{(\Ucmap^{-1})}{\Uatlas}$ and $\indatl{(\Vcmap^{-1})}{\Vatlas}$.
Suppose also that those atlases are $\Ck$-compatible.
This is equivalent to the assumption that the ``transition map'' $\gdif=\Vcmap\circ\Ucmap^{-1}\colon\Wdomain\to\Wdomain$ is a $\Ck$-diffeomorphism of $\mpair{\Wdomain}{\Watlas}$.
Then $\uva{} = \indatl{(\Ucmap^{-1})}{\Uatlas} \cup \indatl{(\Vcmap^{-1})}{\Vatlas}$ is a $\Ck$-atlas on $\Aspace$.

One can think that a $\Ck$-manifold $\mpair{\Aspace}{\Latlas}$ is \term{glued} from $\Ck$-manifolds $\mpair{\Udomain}{\Uatlas}$ and $\mpair{\Vdomain}{\Vatlas}$ by the identity $\Ck$-diffeomorphism $\id_{\Wdomain}$, while $\mpair{\Aspace}{\uva{}}$ is glued from $\mpair{\Udomain}{\Uatlas}$ and $\mpair{\Vdomain}{\Vatlas}$ via the $\Ck$-diffeomorphism $\gdif$.

Also, in the spirit of~\cite[Definition~6.9]{Munkres:AnnMath:1968}, one can also say that $\mpair{\Aspace}{\uva{}}$ is obtained by \term{pasting $\Udomain$ and $\Vdomain$ into $\mpair{\Aspace}{\Latlas}$ via $\Ucmap$ and $\Vcmap$}.

Note that a priori $\mpair{\Aspace}{\Latlas}$ and $\mpair{\Aspace}{\uva{}}$ might not be $\Ck$-diffeomorphic.
In this section we will classify $\Ck$-structures $\mpair{\Aspace}{\uva{}}$ up to a $\Ck$-diffeomorphism leaving invariant $\Udomain$ and $\Vdomain$ or exchanging them, see Theorems~\ref{th:CkLUV_HLUV} and~\ref{th:act_DUWZ2_DW} below.

\begin{subdefinition}
An \term{$\UVAtlas$-atlas on $\Aspace$} is a pair $\{ \Ucmap\colon\Udomain\to\Udomain, \Vcmap\colon\Vdomain\to\Vdomain\}$ of \term{homeomorphisms} such that $\Ucmap(\Wdomain) = \Wdomain = \Vcmap(\Wdomain)$ and the induced ``\term{transition map}''
\[
    \erestr{(\Vcmap\circ\Ucmap^{-1})}{\Wdomain}\colon\mpair{\Wdomain}{\Watlas}\to\mpair{\Wdomain}{\Watlas}
\]
is a \term{$\Ck$-diffeomorphism}.
Such an atlas will also be denoted by $\{(\Udomain,\Ucmap), (\Vdomain,\Vcmap)\}$.
\end{subdefinition}
Thus an $\UVAtlas$-atlas can be regarded as a $\Ck$-atlas whose coordinate maps take values in the disjoint union $\mpair{\Udomain}{\Uatlas} \sqcup \mpair{\Vdomain}{\Vatlas}$ instead of some cones.

The following lemma is trivial and just repeats what was said before on $\UVAtlas$-atlases.
We leave it for the reader.
\begin{sublemma}\label{lm:existence_of_UV_atlases}
{\rm(1)}~For each $\UVAtlas$-atlas $\Aatlas=\{(\Udomain,\Ucmap), (\Vdomain,\Vcmap)\}$ the collection of charts:
\[
    \uva{\Aatlas} \,=\,
        \indatl{(\Ucmap^{-1})}{\Uatlas}
        \,\cup\,
        \indatl{(\Vcmap^{-1})}{\Vatlas}
\]
constitute a $\Ck$-atlas on $\Aspace$ such that the following commutative diagram in the arrows are natural inclusions being also \term{open $\Ck$-embeddings}:
\begin{equation*}
\begin{gathered}
    \xymatrix{
        \mpair{\Wdomain}{\Watlas} \ar[r]^-{\wvinc} \ar[d]_-{\wuinc} &
        \mpair{\Vdomain}{\Vatlas} \ar[d]^{\Vcmap^{-1}} \\
        \mpair{\Udomain}{\Uatlas} \ar[r]^-{\Ucmap^{-1}} &
        \mpair{\Aspace}{\uva{\Aatlas}}
    }
\end{gathered}
\end{equation*}

{\rm(2)}~Conversely, let $\Batlas$ be a $\Ck$-structure on $\Aspace$ such that there exist $\Ck$-diffeomorphisms
\begin{align*}
    &\Ucmap\colon\mpair{\Udomain}{\AtlasRestr{\Batlas}{\Udomain}}\to\mpair{\Udomain}{\Uatlas}, &
    &\Vcmap\colon\mpair{\Vdomain}{\AtlasRestr{\Batlas}{\Vdomain}}\to\mpair{\Vdomain}{\Vatlas}.
\end{align*}
Then the pair $\Aatlas=\{(\Udomain,\Ucmap), (\Vdomain,\Vcmap)\}$ is an $\UVAtlas$-atlas, such that the corresponding $\Ck$-structure $\uva{\Aatlas}$ is $\Ck$-compatible with $\Batlas$, i.e.\ $\uva{\Aatlas}\subset\Batlas$.
\end{sublemma}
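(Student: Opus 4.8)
The plan is to deduce everything from the formal properties of the ``push-forward'' atlas $\indatl{\dif}{\Aatlas}$ recorded after~\eqref{equ:induced_atlas} (for a homeomorphism $\dif$ it is again a $\Ck$-atlas, carries the same transition maps, and $\dif$ is a $\Ck$-diffeomorphism onto it) together with the compatibility criterion of Lemma~\ref{lm:char_equiv_cr_structures}. For part~(1), $\indatl{(\Ucmap^{-1})}{\Uatlas}$ and $\indatl{(\Vcmap^{-1})}{\Vatlas}$ are then $\Ck$-atlases on $\Udomain$ and $\Vdomain$ whose domains cover $\Aspace=\Udomain\cup\Vdomain$, so $\uva{\Aatlas}$ is a $\Ck$-atlas as soon as the two partial atlases are mutually $\Ck$-compatible; since their charts overlap only inside $\Wdomain$, this is compatibility on $\Wdomain$, which by Lemma~\ref{lm:char_equiv_cr_structures} means that $\id_{\Wdomain}$ is a $\Ck$-diffeomorphism from $\mpair{\Wdomain}{\AtlasRestr{\indatl{(\Ucmap^{-1})}{\Uatlas}}{\Wdomain}}$ to $\mpair{\Wdomain}{\AtlasRestr{\indatl{(\Vcmap^{-1})}{\Vatlas}}{\Wdomain}}$.

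Because $\Ucmap(\Wdomain)=\Wdomain=\Vcmap(\Wdomain)$ and $\Watlas=\AtlasRestr{\Uatlas}{\Wdomain}=\AtlasRestr{\Vatlas}{\Wdomain}$, these restricted structures are exactly the push-forwards $\indatl{(\erestr{\Ucmap^{-1}}{\Wdomain})}{\Watlas}$ and $\indatl{(\erestr{\Vcmap^{-1}}{\Wdomain})}{\Watlas}$, onto which $\erestr{\Ucmap}{\Wdomain}$ and $\erestr{\Vcmap}{\Wdomain}$ are $\Ck$-diffeomorphisms. I would then factor
\[
    \id_{\Wdomain} = (\erestr{\Vcmap}{\Wdomain})^{-1}\circ\gdif\circ\erestr{\Ucmap}{\Wdomain},
    \qquad \gdif=\erestr{(\Vcmap\circ\Ucmap^{-1})}{\Wdomain} ,
\]
where the outer factors are $\Ck$-diffeomorphisms by the push-forward property and the middle one $\gdif$ by the defining hypothesis of a $\UVAtlas$-atlas; hence $\id_{\Wdomain}$ is a $\Ck$-diffeomorphism and $\uva{\Aatlas}$ is a $\Ck$-atlas. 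Granting this, $\AtlasRestr{\uva{\Aatlas}}{\Udomain}$ is $\Ck$-equivalent to $\indatl{(\Ucmap^{-1})}{\Uatlas}$ (the charts from $\Vdomain$ contribute only inside $\Wdomain$), so $\Ucmap^{-1}$ is the push-forward diffeomorphism $\mpair{\Udomain}{\Uatlas}\to\mpair{\Udomain}{\indatl{(\Ucmap^{-1})}{\Uatlas}}$ followed by the open inclusion into $\mpair{\Aspace}{\uva{\Aatlas}}$, hence an open $\Ck$-embedding; likewise for $\Vcmap^{-1}$ and the inclusions $\wuinc,\wvinc$, and the square records that $\mpair{\Aspace}{\uva{\Aatlas}}$ is glued from the two pieces along $\mpair{\Wdomain}{\Watlas}$.

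For part~(2) I would run the same bookkeeping in reverse. Assuming $\Ucmap,\Vcmap$ preserve $\Wdomain$ (forced if $\Aatlas$ is to be a $\UVAtlas$-atlas, and automatic in the intended applications), restricting the given diffeomorphisms to $\Wdomain$ and composing exhibits $\gdif=\erestr{\Vcmap}{\Wdomain}\circ(\erestr{\Ucmap}{\Wdomain})^{-1}$ as a $\Ck$-diffeomorphism of $\mpair{\Wdomain}{\Watlas}$, so $\Aatlas$ is a $\UVAtlas$-atlas. For $\uva{\Aatlas}\subset\Batlas$ it suffices, by Lemma~\ref{lm:char_equiv_cr_structures} on $\Udomain$ and on $\Vdomain$, to see that $\indatl{(\Ucmap^{-1})}{\Uatlas}$ is $\Ck$-compatible with $\AtlasRestr{\Batlas}{\Udomain}$: the hypothesis makes $\Ucmap^{-1}\colon\mpair{\Udomain}{\Uatlas}\to\mpair{\Udomain}{\AtlasRestr{\Batlas}{\Udomain}}$ a $\Ck$-diffeomorphism, and comparing it with the push-forward diffeomorphism $\mpair{\Udomain}{\Uatlas}\to\mpair{\Udomain}{\indatl{(\Ucmap^{-1})}{\Uatlas}}$ forces $\id_{\Udomain}$ to be a $\Ck$-diffeomorphism between the two targets; the same on $\Vdomain$ together with maximality of $\Batlas$ yields $\uva{\Aatlas}\subset\Batlas$.

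The only genuine computation is the factorization of $\id_{\Wdomain}$ through $\gdif$; all else is formal manipulation of push-forward atlases, so I expect no serious obstacle. The point to watch is the identification $\AtlasRestr{\indatl{(\Ucmap^{-1})}{\Uatlas}}{\Wdomain}=\indatl{(\erestr{\Ucmap^{-1}}{\Wdomain})}{\Watlas}$, which relies on $\Ucmap(\Wdomain)=\Wdomain$; this is also the spot where part~(2) tacitly needs the charts to preserve $\Wdomain$ before the pair can be called a $\UVAtlas$-atlas.
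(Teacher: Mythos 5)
Your proof is correct, and it is exactly the argument the paper has in mind: the paper offers no written proof of this lemma (it is declared ``trivial'' and left to the reader, being a restatement of the discussion preceding the definition of a $\UVAtlas$-atlas), and what you write is the careful formalization of that discussion via the push-forward properties of $\indatl{\dif}{\Aatlas}$ recorded after~\eqref{equ:induced_atlas} and the compatibility criterion of Lemma~\ref{lm:char_equiv_cr_structures}; in particular, your factorization $\id_{\Wdomain}=(\erestr{\Vcmap}{\Wdomain})^{-1}\circ\gdif\circ\erestr{\Ucmap}{\Wdomain}$ and the identification $\AtlasRestr{\indatl{(\Ucmap^{-1})}{\Uatlas}}{\Wdomain}=\indatl{(\erestr{\Ucmap^{-1}}{\Wdomain})}{\Watlas}$ are precisely the two points that genuinely need checking. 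Two remarks. First, you are right that part~(2) tacitly requires $\Ucmap(\Wdomain)=\Wdomain=\Vcmap(\Wdomain)$: the stated hypotheses do not force it (take $\Batlas$ to be the original structure and $\Ucmap$ any self-diffeomorphism of $\mpair{\Udomain}{\Uatlas}$ moving $\Wdomain$, with $\Vcmap=\id_{\Vdomain}$), so this is a gap in the paper's formulation rather than in your argument, and adding the hypothesis, as you do, is the correct repair. Second, one point you gloss over: the displayed square does not literally commute unless the transition map $\gdif$ is the identity, since the two ways around send $\px\in\Wdomain$ to $\Ucmap^{-1}(\px)$ and to $\Vcmap^{-1}(\px)$ respectively; the honestly commuting (gluing/pushout) diagram has $\wvinc\circ\gdif$ as its top arrow, which is what your closing phrase about $\mpair{\Aspace}{\uva{\Aatlas}}$ being glued from the two pieces along $\mpair{\Wdomain}{\Watlas}$ implicitly means. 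This imprecision, too, is inherited from the paper's own statement rather than introduced by you.
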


Given an $\UVAtlas$-atlas $\Aatlas=\{(\Udomain,\Ucmap), (\Vdomain,\Vcmap)\}$ on $\Aspace$, denote by $\mpair{\Aspace}{\Aatlas}$, instead of $\mpair{\Aspace}{\uva{\Aatlas}}$, the manifold $\Aspace$ endowed with the corresponding $\Ck$-structure $\uva{\Aatlas}$ determined by $\Aatlas$.

In particular, the pair of identity maps $\{ (\Udomain, \id_{\Udomain}), (\Vdomain, \id_{\Vdomain})\}$ is an
$\UVAtlas$-atlas inducing the initial $\Ck$-structure $\Latlas$ on $\Aspace$, and therefore we will denote this atlas by the same letter:
\begin{equation}\label{equ:atlas_C__U_V}
    \Latlas = \{ (\Udomain, \id_{\Udomain}), (\Vdomain, \id_{\Vdomain})\}.
\end{equation}
This will never lead to confusion.

Denote by $\CkAUV$ the set of all $\Ck$-structures on $\Aspace$ containing a $\Ck$-atlas $\uva{\Aatlas}$ for some $\UVAtlas$-atlas $\Aatlas$.

Further, let $\HomeoLUV$ be the group of \term{homeomorphisms of the triple} $(\Aspace,\Udomain,\Vdomain)$, i.e.\ homeomorphisms $\dif\colon\Aspace\to\Aspace$ such that $\dif(\Udomain)=\Udomain$ and $\dif(\Vdomain)=\Vdomain$.
In particular, $\dif(\Wdomain)=\Wdomain$.

Let also $\HomeoLPartUV$ be the group of homeomorphisms of $\Aspace$ which leave $\Udomain$ and $\Vdomain$ invariant or exchange them.
\begin{subremark}\rm
Note that a priori there might not exist a homeomorphism $\uvdif\colon\Aspace\to\Aspace$ exchanging $\Udomain$ and $\Vdomain$, e.g.\ when $\Udomain$ and $\Vdomain$ are not homeomorphic, and in that case $\HomeoLUV=\HomeoLPartUV$.
However, if such $\uvdif$ exists, then $\HomeoLUV$ is a normal subgroup of $\HomeoLPartUV$ of index $2$.

On the other hand, if for some $\Batlas,\Batlas'\in\CkAUV$ there exists a $\Ck$-diffeomorphism $\uvdif\colon\mpair{\Aspace}{\Batlas}\to\mpair{\Aspace}{\Batlas'}$ exchanging $\Udomain$ and $\Vdomain$, i.e.\ $\uvdif(\Udomain)=\Vdomain$ and $\uvdif(\Vdomain)=\Udomain$, then $\mpair{\Udomain}{\Uatlas}$ and $\mpair{\Vdomain}{\Vatlas}$ are $\Ck$-diffeomorphic.
\end{subremark}

The following lemma is easy and straightforward, and we leave its verification for the reader.
\begin{sublemma}
Let $\Aatlas=\{(\Udomain,\Ucmap), (\Vdomain,\Vcmap)\}$ be an $\UVAtlas$-atlas on $\Aspace$ and
\[
    \uva{\Aatlas}
        \,=\,
    \indatl{(\Ucmap^{-1})}{\Uatlas}
        \ \cup\
    \indatl{(\Vcmap^{-1})}{\Vatlas}
\]
be the induced $\Ck$-atlas.

{\rm~1)}~Then for every $\dif\in\HomeoLUV$ the pair of homeomorphisms
\begin{equation}\label{equ:h_A_UU_VV}
    \indatl{\dif}{\Aatlas}:=
    \bigl\{
        \Vcmap\circ\dif^{-1}\colon\Udomain\to\Udomain, \
        \Ucmap\circ\dif^{-1}\colon\Vdomain\to\Vdomain
    \bigr\}
\end{equation}
is an $\UVAtlas$-atlas as well and
\[
    \indatl{\dif}{\uva{\Aatlas}} = \uva{\indatl{\dif}{\Aatlas}}.
\]
Moreover, if $\kdif\in\HomeoLUV$ is another homeomorphism, then $\indatl{(\kdif\circ\dif)}{\Aatlas}=\indatl{\kdif}{\indatl{\dif}{\Aatlas}}$.

Thus we get a well-defined left action of $\HomeoLUV$ on the set of all $\UVAtlas$-atlases, which agrees with the usual action of $\HomeoLUV$ on the set of all $\Ck$-structures on $\Aspace$.
In particular, $\CkAUV$ is invariant under the action of $\HomeoLUV$.

{\rm~2)}~Suppose that there exists a $\Ck$-diffeomorphism $\uvdif\colon\mpair{\Udomain}{\Uatlas}\to\mpair{\Vdomain}{\Vatlas}$ such that $\uvdif(\Wdomain)=\Wdomain$.
Then for each $\dif\in\HomeoLPartUV$ exchanging $\Udomain$ and $\Vdomain$, i.e.\ belonging to the complement $\HomeoLPartUV\setminus\HomeoLUV$, the following pair of homeomorphism
\begin{equation}\label{equ:h_A_UV_VU}
    \indatl{\dif}{\Aatlas}:=
    \bigl\{
        \uvdif\circ\Vcmap\circ\dif^{-1}\colon\Udomain\to\Udomain, \
        \uvdif^{-1}\circ\Ucmap\circ\dif^{-1}\colon\Vdomain\to\Vdomain
    \bigr\}
\end{equation}
is still an $\UVAtlas$-atlas.
Moreover, again, for any $\dif,\kdif\in\HomeoLPartUV$, we have that
\begin{align*}
    &\indatl{\dif}{\uva{\Aatlas}} = \uva{\indatl{\dif}{\Aatlas}}, &
    &\indatl{(\kdif\circ\dif)}{\Aatlas}=\indatl{\kdif}{\indatl{\dif}{\Aatlas}},
\end{align*}
where $\indatl{\dif}{\cdot}$ is defined by the respective formula~\eqref{equ:h_A_UU_VV} or~\eqref{equ:h_A_UV_VU}.

Hence, we again obtain a well-defined (but depending on a fixed $\uvdif$) action of $\HomeoLPartUV$ on the set of all $\UVAtlas$-atlases, which agrees with the usual action of $\HomeoLUV$ on the set of all $\Ck$-structures on $\Aspace$.
In particular, $\CkAUV$ is still invariant under the action of $\HomeoLUV$.
\end{sublemma}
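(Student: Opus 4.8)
The plan is to prove both parts by direct verification: in each case one is handed an explicit pair of maps and must check three things — that the pair is again a $\UVAtlas$-atlas, that $\indatl{\dif}{\uva{\Aatlas}}=\uva{\indatl{\dif}{\Aatlas}}$, and that the assignment obeys the action axioms — and none of these is more than a diagram chase once the domains and codomains are tracked. First I would treat part~1. For $\dif\in\HomeoLUV$ one has $\dif(\Udomain)=\Udomain$, $\dif(\Vdomain)=\Vdomain$, hence $\dif(\Wdomain)=\Wdomain$, so the two maps of \eqref{equ:h_A_UU_VV} are well-defined homeomorphisms $\Udomain\to\Udomain$ and $\Vdomain\to\Vdomain$ (their precise shape being forced by this source/target matching) fixing $\Wdomain$. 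The decisive point is that the induced transition map is unchanged,
\[
\erestr{\bigl( (\Vcmap\circ\dif^{-1}) \circ (\Ucmap\circ\dif^{-1})^{-1} \bigr)}{\Wdomain}
\;=\;
\erestr{(\Vcmap\circ\Ucmap^{-1})}{\Wdomain}
\;=\; \gdif,
\]
because the two copies of $\dif^{\mp1}$ cancel; this is exactly the invariance of transition maps already recorded after \eqref{equ:induced_atlas}, and it shows at once that $\indatl{\dif}{\Aatlas}$ is a $\UVAtlas$-atlas.

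The identity $\indatl{\dif}{\uva{\Aatlas}}=\uva{\indatl{\dif}{\Aatlas}}$ I would then obtain chart by chart: both sides have the same $\Udomain$-charts, namely $\indatl{(\dif\circ\Ucmap^{-1})}{\Uatlas}$, since pushing $\uva{\Aatlas}$ forward by $\dif$ and recognizing the result as a pull-back is governed by the functoriality $\indatl{\dif}{\indatl{g}{\Uatlas}}=\indatl{(\dif\circ g)}{\Uatlas}$; the $\Vdomain$-charts are symmetric. The action axiom $\indatl{(\kdif\circ\dif)}{\Aatlas}=\indatl{\kdif}{\indatl{\dif}{\Aatlas}}$ reduces to $(\kdif\circ\dif)^{-1}=\dif^{-1}\circ\kdif^{-1}$, and the agreement with the usual action on $\Ck$-structures — hence invariance of $\CkAUV$ — follows from Lemma~\ref{lm:existence_of_UV_atlases} together with the discussion following \eqref{equ:induced_atlas}.

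For part~2 the same scheme runs, but now $\dif$ exchanges the sheets, $\dif(\Udomain)=\Vdomain$ and $\dif(\Vdomain)=\Udomain$, so to land back in the correct sheet the two maps of \eqref{equ:h_A_UV_VU} must be routed through $\uvdif^{\pm1}$; as before their exact form (which of $\uvdif,\uvdif^{-1}$ and of $\Ucmap,\Vcmap$ appears where) is forced by source/target matching. The one real computation is the new transition map: after the inner factor $\dif^{-1}\circ\dif$ collapses to $\id_{\Wdomain}$, it works out to
\[
\erestr{\uvdif}{\Wdomain} \,\circ\,
\erestr{(\Ucmap\circ\Vcmap^{-1})}{\Wdomain} \,\circ\,
\erestr{\uvdif}{\Wdomain},
\]
a composition of self-maps of $\mpair{\Wdomain}{\Watlas}$. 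Since $\AtlasRestr{\Uatlas}{\Wdomain}=\Watlas=\AtlasRestr{\Vatlas}{\Wdomain}$ and $\uvdif(\Wdomain)=\Wdomain$, the restriction $\erestr{\uvdif}{\Wdomain}$ is a $\Ck$-diffeomorphism of $\mpair{\Wdomain}{\Watlas}$, while $\erestr{(\Ucmap\circ\Vcmap^{-1})}{\Wdomain}=\gdif^{-1}$ is one because $\Aatlas$ is a $\UVAtlas$-atlas; hence the composite is a $\Ck$-diffeomorphism and $\indatl{\dif}{\Aatlas}$ is a $\UVAtlas$-atlas.

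The step I expect to be the main obstacle is the identity $\indatl{\dif}{\uva{\Aatlas}}=\uva{\indatl{\dif}{\Aatlas}}$ in part~2, because it is no longer an equality of partial atlases: as $\dif$ swaps the sheets, the $\Udomain$-charts of the left-hand side are pushed-forward $\Vatlas$-charts whereas those of the right-hand side are built from $\Uatlas$, so the equality must be read as equality of the generated $\Ck$-structures. This is precisely where the hypothesis that $\uvdif$ is a $\Ck$-diffeomorphism (and not merely a homeomorphism) is consumed: the transition between such a $\Vatlas$-chart and such a $\Uatlas$-chart simplifies to a coordinate representation of $\uvdif^{-1}$, which is $\Ck$ exactly because $\uvdif$ is a $\Ck$-diffeomorphism, so the two collections are $\Ck$-compatible and define the same structure. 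Finally the action axiom I would check according to whether each of $\dif,\kdif$ preserves or exchanges $\Udomain,\Vdomain$: the mixed cases carry a single matching factor $\uvdif^{\pm1}$, while the only delicate case is exchange after exchange, where $\kdif\circ\dif$ preserves the sheets and one must see the two introduced factors cancel via $\uvdif^{-1}\circ\uvdif=\id$, returning the preserving-case formula. As in part~1, agreement with the standard action and invariance of $\CkAUV$ then follow, now using $\uvdif$ to identify $\mpair{\Udomain}{\Uatlas}$ with $\mpair{\Vdomain}{\Vatlas}$.
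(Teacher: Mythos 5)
Your proof is correct, and since the paper itself omits the argument (the lemma is explicitly "left for the reader"), your direct verification is exactly the intended one: the cancellation of $\dif^{\mp1}$ showing the transition map is unchanged in part 1, the computation of the new transition map as $\erestr{\uvdif}{\Wdomain}\circ\erestr{(\Ucmap\circ\Vcmap^{-1})}{\Wdomain}\circ\erestr{\uvdif}{\Wdomain}$ in part 2, and the four-case check of the action axiom with the two $\uvdif^{\pm1}$ factors cancelling in the exchange-after-exchange case. You also correctly flag the two points the printed statement glosses over: the formulas \eqref{equ:h_A_UU_VV} and \eqref{equ:h_A_UV_VU} must be read with $\Ucmap,\Vcmap$ (respectively $\uvdif,\uvdif^{-1}$) placed so that sources and targets match, and in part 2 the identity $\indatl{\dif}{\uva{\Aatlas}}=\uva{\indatl{\dif}{\Aatlas}}$ holds as an equality of the generated $\Ck$-structures rather than of chart collections, which is precisely where the hypothesis that $\uvdif$ is a $\Ck$-diffeomorphism (not merely a homeomorphism respecting $\Wdomain$) is consumed.
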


Thus our aim is to describe the orbit spaces
\[
\CkAUV\,/\,\HomeoLUV
\qquad \text{and} \qquad
\CkAUV\,/\,\HomeoLPartUV.
\]

\subsection{Diffeomorphisms preserving $\Udomain$ and $\Vdomain$}
Let $\DiffUW$ and $\DiffVW$ be respectively the groups of $\Ck$-diffeomorphisms of $\mpair{\Udomain}{\Uatlas}$ and $\mpair{\Vdomain}{\Vatlas}$ which leave $\Wdomain$ invariant, and $\DiffW$ be the group of $\Ck$-diffeomorphisms of $\mpair{\Wdomain}{\Watlas}$.
Then we have the restriction homomorphisms:
\begin{align*}
    & \rho_{\Udomain,\Wdomain}\colon\DiffUW\to\DiffW, & & \ \rho_{\Udomain,\Wdomain}(\dif) = \erestr{\dif}{\Wdomain}, \\
    & \rho_{\Vdomain,\Wdomain}\colon\DiffVW\to\DiffW, & & \ \rho_{\Vdomain,\Wdomain}(\dif) = \erestr{\dif}{\Wdomain}.
\end{align*}
Denote by
\begin{align}\label{equ:EdiffWU}
    &\EDiffWU:=\rho_{\Udomain,\Wdomain}(\DiffUW),&
    &\EDiffWV:=\rho_{\Vdomain,\Wdomain}(\DiffVW)
\end{align}
the respective images of these homomorphisms.
They consist of $\Ck$-diffeomorphism of $\mpair{\Wdomain}{\Watlas}$ which can be extended to $\Ck$-diffeomorphisms of $\Udomain$ and $\Vdomain$ respectively.
In particular, one can define the following set of \term{$(\EDiffWV,\EDiffWU)$-double cosets}:
\[
    \dbl{\EDiffWV}{\DiffW}{\EDiffWU},
\]
consisting of the orbits of the action of $\EDiffWV \times \EDiffWU$ on $\DiffW$ given by $(\bdif,\adif)\cdot\gdif = \bdif\circ\gdif\circ\adif^{-1}$.

The following theorem can be proved by almost literally the same arguments as~\ref{enum:lm:diffs_of_Y:UV} of Lemma~\ref{lm:diffs_of_Y} and the first part of Theorem~\ref{th:Ck_str_on_Y}.
\begin{subtheorem}\label{th:CkLUV_HLUV}
The correspondence
\begin{equation*}
\Aatlas=\{(\Udomain,\Ucmap),(\Vdomain,\Vcmap)\}
\ \longmapsto \
\Atrmap = \Vcmap\circ\Ucmap^{-1}
\end{equation*}
associating to each $\UVAtlas$-atlas $\Aatlas$ on $\Aspace$ its transition map $\Atrmap$ yields an \term{injective map} $\mu'$
\begin{itemize}
\item
from the set $\CkAUV\,/\,\HomeoLUV$ of all $\Ck$-structures on $\Aspace$ induced by some $\UVAtlas$-atlas up to a $\Ck$-diffeomorphism leaving invariant $\Udomain$ and $\Vdomain$
\item
to the set \ $\dbl{\EDiffWV}{\DiffW}{\EDiffWU}$ \ of $(\EDiffWV,\EDiffWU)$-double cosets.
\end{itemize}
Moreover, $\mu'$ is a bijection if and only if every $\gdif\in\DiffW$ is a transition map for some $\UVAtlas$-atlas $\Aatlas$.
\end{subtheorem}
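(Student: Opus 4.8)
The plan is to reduce the statement to an abstract form of part~\ref{enum:lm:diffs_of_Y:UV} of Lemma~\ref{lm:diffs_of_Y}, after which $\mu'$ can be analysed exactly as in the first part of Theorem~\ref{th:Ck_str_on_Y}. Fix two $\UVAtlas$-atlases $\Aatlas=\{(\Udomain,\aUcmap),(\Vdomain,\aVcmap)\}$ and $\Batlas=\{(\Udomain,\bUcmap),(\Vdomain,\bVcmap)\}$ with transition maps $\Atrmap=\aVcmap\circ\aUcmap^{-1}$ and $\Btrmap=\bVcmap\circ\bUcmap^{-1}$ in $\DiffW$. The key claim I would prove is that a $\Ck$-diffeomorphism $\dif\colon\mpair{\Aspace}{\uva{\Aatlas}}\to\mpair{\Aspace}{\uva{\Batlas}}$ with $\dif(\Udomain)=\Udomain$, $\dif(\Vdomain)=\Vdomain$ exists if and only if $\Btrmap=\hbdif\circ\Atrmap\circ\hadif^{-1}$ on $\Wdomain$ for some $\hadif\in\EDiffWU$ and $\hbdif\in\EDiffWV$. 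By Lemma~\ref{lm:existence_of_UV_atlases} the maps $\aUcmap^{-1},\bUcmap^{-1}$ are open $\Ck$-embeddings of $\mpair{\Udomain}{\Uatlas}$ onto the $\Udomain$-parts of the two structures, so $\aUcmap,\bUcmap$ serve as charts and the coordinate representation $\adif=\bUcmap\circ\erestr{\dif}{\Udomain}\circ\aUcmap^{-1}$ is a self-homeomorphism of the model $\Udomain$; likewise $\bdif=\bVcmap\circ\erestr{\dif}{\Vdomain}\circ\aVcmap^{-1}$ on $\Vdomain$. Because $\dif$ preserves $\Wdomain$ and every coordinate map carries $\Wdomain$ onto $\Wdomain$, one has $\adif\in\DiffUW$ and $\bdif\in\DiffVW$ precisely when $\dif$ is a $\Ck$-diffeomorphism, and then $\hadif:=\erestr{\adif}{\Wdomain}\in\EDiffWU$, $\hbdif:=\erestr{\bdif}{\Wdomain}\in\EDiffWV$ by~\eqref{equ:EdiffWU}. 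Transcribing diagram~\eqref{equ:h_UU_VV__Y} with $\bR$ replaced by the model manifolds $\mpair{\Udomain}{\Uatlas}$, $\mpair{\Vdomain}{\Vatlas}$ and $\Rpos$ by $\mpair{\Wdomain}{\Watlas}$, and reading off its middle vertical rectangle, then yields the required identity. For the converse I would lift $\hadif,\hbdif$ to $\adif\in\DiffUW$, $\bdif\in\DiffVW$ and reassemble $\dif$ by the formula~\eqref{equ:h_Y_Y__reconstr_via_a_b}; the identity $\Btrmap=\hbdif\circ\Atrmap\circ\hadif^{-1}$ is exactly what makes the two branches agree on $\Wdomain$, so $\dif$ is a well-defined $\Ck$-diffeomorphism lying in $\HomeoLUV$.

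Granting this claim, I would define $\mu'$ by sending the class of $\Astruct\in\CkAUV$ to the $(\EDiffWV,\EDiffWU)$-double coset of the transition map of any $\UVAtlas$-subatlas $\Aatlas\subset\Astruct$. Well-definedness is then immediate: two subatlases of a single structure are compared by $\id_{\Aspace}$, while subatlases of two structures in the same $\HomeoLUV$-orbit are compared by the corresponding $\dif\in\HomeoLUV$, and in both cases the claim places their transition maps in one double coset. Injectivity is the converse half of the claim, namely that coincidence of double cosets produces a $\Ck$-diffeomorphism in $\HomeoLUV$ between the maximal structures. This reproduces verbatim the first part of the proof of Theorem~\ref{th:Ck_str_on_Y}, now with the single group $\DExtRneg$ split into $\EDiffWU,\EDiffWV$ and with $\DRpos$ replaced by $\DiffW$.

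It remains to characterize surjectivity. Since $\EDiffWU$ and $\EDiffWV$ are subgroups of $\DiffW$ (being images of the restriction homomorphisms), a given $\UVAtlas$-atlas $\Aatlas$ can be modified by replacing $\aUcmap,\aVcmap$ with $\adif\circ\aUcmap$, $\bdif\circ\aVcmap$ for arbitrary $\adif\in\DiffUW$, $\bdif\in\DiffVW$, which changes its transition map to $\erestr{\bdif}{\Wdomain}\circ\Atrmap\circ(\erestr{\adif}{\Wdomain})^{-1}$; thus every element of the double coset of $\Atrmap$ is itself realized as a transition map. Consequently $\mu'$ is onto iff every double coset lies in its image iff every $\gdif\in\DiffW$ is a transition map of some $\UVAtlas$-atlas, which together with the injectivity already established gives the asserted criterion for bijectivity. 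I expect the only delicate point to be the bookkeeping in the first paragraph: one must check that the abstract coordinate representations $\adif,\bdif$ genuinely preserve $\Wdomain$ and are $\Ck$ relative to $\Uatlas,\Vatlas$ rather than relative to charts into cones, so that descending to the quotient groups $\EDiffWU,\EDiffWV$ is legitimate; once diagram~\eqref{equ:h_UU_VV__Y} is rewritten in this language, everything else is formal.
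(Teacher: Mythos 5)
Your proposal is correct and follows essentially the same route as the paper: it reduces the statement to the equivalence ``there exists $\dif\in\HomeoLUV$ which is a $\Ck$-diffeomorphism $\mpair{\Aspace}{\Aatlas}\to\mpair{\Aspace}{\Batlas}$ iff $\Btrmap=\hbdif\circ\Atrmap\circ\hadif^{-1}$ with $\hadif\in\EDiffWU$, $\hbdif\in\EDiffWV$'', proved by rewriting diagram~\eqref{equ:h_UU_VV__Y} with the model manifolds $\mpair{\Udomain}{\Uatlas}$, $\mpair{\Vdomain}{\Vatlas}$, $\mpair{\Wdomain}{\Watlas}$ in place of $\bR$ and $\Rpos$ (this is exactly the paper's diagram~\eqref{equ:h_UU_VV__L_general}) and reconstructing $\dif$ via~\eqref{equ:h_Y_Y__reconstr_via_a_b} for the converse, after which well-definedness and injectivity of $\mu'$ follow as in Theorem~\ref{th:Ck_str_on_Y}. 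Your explicit argument that the set of transition maps is saturated under the double-coset action (by replacing $\aUcmap,\aVcmap$ with $\adif\circ\aUcmap,\bdif\circ\aVcmap$), which upgrades ``every coset contains a transition map'' to ``every $\gdif\in\DiffW$ is a transition map'', is a small but genuine improvement, since the paper leaves this bijectivity criterion unproved.
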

\begin{proof}
For an $\UVAtlas$-atlas $\Aatlas$ denote by $[\Aatlas]'$ its equivalence class in
\[ \CkAUV\,/\,\HomeoLUV,\]
and let $[\Atrmap]'$ be the $(\EDiffWV,\EDiffWU)$-double coset in $\dbl{\EDiffWV}{\DiffW}{\EDiffWU}$ of its transition map.

Let $\Aatlas = \{(\Udomain,\aUcmap), (\Vdomain,\aVcmap)\}$ and $\Batlas = \{(\Udomain,\bUcmap), (\Vdomain,\bVcmap)\}$ be two $\UVAtlas$-atlases on $\Aspace$.
We claim that similarly to~\ref{enum:lm:diffs_of_Y:UV} of Lemma~\ref{lm:diffs_of_Y} the following conditions are equivalent:
\begin{enumerate}[label={\rm(\alph*)}]
\item\label{enum:CkLUV__to__HLUV}
there exists a $\Ck$-diffeomorphism $\dif\colon\mpair{\Aspace}{\Aatlas}\to\mpair{\Aspace}{\Batlas}$ belonging to $\HomeoLUV$;
\item\label{enum:HLUV__to__CkLUV}
the corresponding transition maps $\Atrmap=\aVcmap\circ\aUcmap^{-1}$ and $\Btrmap=\bVcmap\circ\bUcmap^{-1}$ belong to the same $(\EDiffWV,\EDiffWU)$-double coset.
\end{enumerate}
Then, as in the proof of the first part of Theorem~\ref{th:Ck_str_on_Y}, the implication~\ref{enum:CkLUV__to__HLUV}$\Rightarrow$\ref{enum:HLUV__to__CkLUV} will guarantee that the correspondence
\[
\CkAUV\,/\,\HomeoLUV \ \ni \ [\Aatlas]' \ \xrightarrow{~~\mu'~~} \ [\Atrmap]' \ \in \ \dbl{\EDiffWV}{\DiffW}{\EDiffWU}
\]
is well defined, while the inverse implication~\ref{enum:HLUV__to__CkLUV}$\Rightarrow$\ref{enum:CkLUV__to__HLUV} will show that $\mu'$ is injective.

Before proving the equivalence \ref{enum:CkLUV__to__HLUV}$\Leftrightarrow$\ref{enum:HLUV__to__CkLUV}, note that for each $\dif\in\HomeoLUV$ we have the following commutative diagram similar to~\eqref{equ:h_UU_VV__Y}:
\begin{equation}\label{equ:h_UU_VV__L_general}
\begin{gathered}
\xymatrix@C=4.5em@R=1.5em{
    \Udomain
        \ar@{-->}@/_6ex/[ddd]_-{\adif}
    &
    \Wdomain
        \ar@{_(->}[l]_-{\wuinc}
        \ar[r]^-{\Atrmap = \aVcmap \circ \aUcmap^{-1}}
        \ar@{-->}@/^2.5ex/[ddd]^-{\erestr{\adif}{\Wdomain}}
    &
    \Wdomain
        \ar@{^(->}[r]^-{\wvinc}
        \ar@{-->}@/_2.5ex/[ddd]_-{\erestr{\bdif}{\Wdomain}}
    &
    \Vdomain
        \ar@{-->}@/^6ex/[ddd]^-{\bdif}
    \\
    %%%%%%
    \Udomain
        \ar[u]_-{\aUcmap}
        \ar[d]_-{\erestr{\dif}{\Udomain}}
    &
    \Wdomain
        \ar@{_(->}[l]_-{\wuinc}
        \ar@{=}[r]
        \ar[u]^-{\erestr{\aUcmap}{\Wdomain}}
        \ar[d]_-{\erestr{\dif}{\Wdomain}}
    &
    \Wdomain
        \ar@{^(->}[r]^-{\wvinc}
        \ar[u]_-{\erestr{\aVcmap}{\Wdomain}}
        \ar[d]^-{\erestr{\dif}{\Wdomain}}
    &
    \Vdomain
        \ar[u]^-{\aVcmap}
        \ar[d]^-{\erestr{\dif}{\Vdomain}}
    \\
    %%%%%%%%%
    \Udomain
        \ar[d]^-{\bUcmap}
    &
    \Wdomain
        \ar@{_(->}[l]_-{\wuinc}
        \ar@{=}[r]
        \ar[d]_-{\erestr{\bUcmap}{\Wdomain}}
    &
    \Wdomain
        \ar@{^(->}[r]^-{\wvinc}
        \ar[d]^-{\erestr{\bVcmap}{\Wdomain}}
    &
    \Vdomain
        \ar[d]_-{\bVcmap}
    \\
    %%%%%%%%%%
    \Udomain
    &
    \Wdomain
        \ar@{_(->}[l]_-{\wuinc}
        \ar[r]^-{\Btrmap = \bVcmap\circ\bUcmap^{-1}}
    &
    \Wdomain
        \ar@{^(->}[r]^-{\wvinc}
    &
    \Vdomain
}
\end{gathered}
\end{equation}
in which commutativity of corner squares means that $\Wdomain$ is invariant under $\aUcmap$, $\aVcmap$, $\bUcmap$, and $\bVcmap$, see Section~\ref{sect:restriction_map}.
This implies that the homeomorphisms $\adif\colon\Udomain\to\Udomain$ and $\bdif\colon\Vdomain\to\Vdomain$ preserve $\Wdomain$ as well.
They can regarded as \term{representation of $\dif$ with respect to the atlas $\Aatlas$}.
Denote $\hadif:=\erestr{\adif}{\Wdomain}$ and $\hbdif:=\erestr{\bdif}{\Wdomain}$.
By commutativity of the middle vertical rectangle in~\eqref{equ:h_UU_VV__L_general} we have the following identity:
\begin{equation}\label{equ:L_gb_b_ga_ainv}
    \Btrmap = \hbdif \circ \Atrmap \circ \hadif^{-1}.
\end{equation}

\ref{enum:CkLUV__to__HLUV}$\Rightarrow$\ref{enum:HLUV__to__CkLUV}
Notice that $\dif$ is $\Ck$ iff $\adif$ and $\bdif$ are $\Ck$-diffeomorphisms of $\mpair{\Udomain}{\Uatlas}$ and $\mpair{\Vdomain}{\Vatlas}$ respectively, i.e.\ $\adif\in\DiffUW$ and $\bdif\in\DiffVW$, and in that case~\eqref{equ:L_gb_b_ga_ainv} shows that $\Atrmap$ and $\Btrmap$ belong to the same $(\EDiffWV,\EDiffWU)$-double coset.

\ref{enum:HLUV__to__CkLUV}$\Rightarrow$\ref{enum:CkLUV__to__HLUV}
Suppose $\Btrmap = \erestr{\bdif}{\Wdomain} \circ \Atrmap \circ \erestr{\adif^{-1}}{\Wdomain}$ for some $\adif\in\DiffUW$ and $\bdif\in\DiffVW$.
Then one easily checks that the map $\dif\colon\Aspace\to\Aspace$ given by~\eqref{equ:h_Y_Y__reconstr_via_a_b} is a $\Ck$-diffeomorphism $\mpair{\Aspace}{\Aatlas}\to\mpair{\Aspace}{\Batlas}$.
\end{proof}

\subsection{Diffeomorphisms exchanging $\Udomain$ and $\Vdomain$}
Suppose that there exists a $\Ck$-diffeo\-morphism $\uvdif\colon\mpair{\Udomain}{\Uatlas}\to\mpair{\Vdomain}{\Vatlas}$ such that $\uvdif(\Wdomain)=\Wdomain$.

In this subsection we will describe the orbit space $\CkAUV\,/\,\HomeoLPartUV$, i.e.\ classify $\Ck$-structures on $\Aspace$ belonging to $\CkAUV$ up to a $\Ck$-diffeomorphism which either leaves $\Udomain$ and $\Vdomain$ invariant or exchange them.

Denote by $\DiffUVW$ the set of $\Cr{\kk}$-diffeomorphisms $\dif\colon\mpair{\Udomain}{\Uatlas}\to\mpair{\Vdomain}{\Vatlas}$ such that $\dif(\Wdomain)=\Wdomain$.
Similarly, let $\DiffVUW$ be the set $\Cr{\kk}$-diffeomorphisms $\dif\colon\mpair{\Vdomain}{\Vatlas}\to\mpair{\Udomain}{\Uatlas}$ such that $\dif(\Wdomain)=\Wdomain$.
Notice that they are \term{sets} and have no canonical group structure.
Also, there is a canonical bijection: $\DiffUVW \ni \eta \mapsto \eta^{-1} \in \DiffVUW$.
Moreover, every $\eta\in\DiffVUW$ yields the following three bijections:
\begin{equation}\label{equ:bij_DiffUV_DiffVU}
\begin{gathered}
\xymatrix{
    & \DiffUVW \ar[dr]^-{\ \xi\, \mapsto\, \xi\, \circ\, \eta^{-1}} \\
    \DiffUW \ar[ru]^-{\dif\, \mapsto\, \eta\, \circ\, \dif}
            \ar[rr]^-{\dif\, \mapsto\, \eta\, \circ\, \dif\, \circ\, \eta^{-1}} &&
    \DiffVW
}
\end{gathered}
\end{equation}
such that the lower horizontal arrow is also an isomorphism of groups.

\begin{sublemma}\label{lm:h_UV_VU}
Let $\Aatlas = \{(\Udomain,\aUcmap), (\Vdomain,\aVcmap)\}$ and $\Batlas = \{(\Udomain,\bUcmap), (\Vdomain,\bVcmap)\}$ be two $\UVAtlas$-atlases on $\Aspace$.
Then the following conditions are equivalent:
\begin{enumerate}[label={\rm(\alph*)}]
\item\label{enum:th:h_UV_VU:diff}
there exists a $\Cr{\kk}$-diffeomorphism $\dif\colon\mpair{\Aspace}{\Aatlas}\to\mpair{\Aspace}{\Batlas}$ exchanging $\Udomain$ and $\Vdomain$;
\item\label{enum:th:h_UV_VU:dbl_cosest}
there exist $\adif\in\DiffUVW$ and $\bdif\in\DiffVUW$ such that
\[ \Btrmap = ( \erestr{\bdif}{\Wdomain} \circ \Atrmap \circ \erestr{\adif}{\Wdomain}^{-1})^{-1}.\]
\end{enumerate}
\end{sublemma}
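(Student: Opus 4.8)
The plan is to prove the two implications separately, following the same scheme as the proof of Theorem~\ref{th:CkLUV_HLUV} but replacing the column-preserving diagram~\eqref{equ:h_UU_VV__L_general} by an ``exchanging'' diagram modelled on~\eqref{equ:h_UV_VU__Y} from part~\ref{enum:lm:diffs_of_Y:VU} of Lemma~\ref{lm:diffs_of_Y}. Given a diffeomorphism $\dif$ as in~\ref{enum:th:h_UV_VU:diff}, so that $\dif(\Udomain)=\Vdomain$ and $\dif(\Vdomain)=\Udomain$, the natural coordinate representations are $\adif = \bVcmap\circ\erestr{\dif}{\Udomain}\circ\aUcmap^{-1}\colon\Udomain\to\Vdomain$ (with respect to the charts $(\Udomain,\aUcmap)$ and $(\Vdomain,\bVcmap)$) and $\bdif = \bUcmap\circ\erestr{\dif}{\Vdomain}\circ\aVcmap^{-1}\colon\Vdomain\to\Udomain$ (with respect to $(\Vdomain,\aVcmap)$ and $(\Udomain,\bUcmap)$); note that here the chart-value spaces are $\Udomain$ and $\Vdomain$ themselves rather than $\bR$. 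Since all four coordinate homeomorphisms preserve $\Wdomain$ and $\dif(\Wdomain)=\Wdomain$, these representations preserve $\Wdomain$, and $\dif$ is a $\Ck$-diffeomorphism if and only if $\adif$ and $\bdif$ are, i.e.\ iff $\adif\in\DiffUVW$ and $\bdif\in\DiffVUW$ (these sets being non-empty precisely because of the standing assumption that some $\uvdif$ exists).

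For~\ref{enum:th:h_UV_VU:diff}$\Rightarrow$\ref{enum:th:h_UV_VU:dbl_cosest} I would assemble the commutative diagram whose upper ``manifold'' row is $\Udomain\xleftarrow{\wuinc}\Wdomain=\Wdomain\xrightarrow{\wvinc}\Vdomain$ and whose lower two rows have their outer columns \emph{interchanged} (exactly as the last two rows of~\eqref{equ:h_UV_VU__Y} are swapped relative to~\eqref{equ:h_UU_VV__Y}), so that the outer vertical composites on the two sides recover $\adif$ and $\bdif$, while along the bottom the transition map $\Btrmap=\bVcmap\circ\bUcmap^{-1}$ now connects the right-hand copy of $\Wdomain$ back to the left-hand one. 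Writing $\hadif=\erestr{\adif}{\Wdomain}$ and $\hbdif=\erestr{\bdif}{\Wdomain}$, commutativity of the middle vertical rectangle then reads $\Btrmap^{-1}=\hbdif\circ\Atrmap\circ\hadif^{-1}$, which is precisely the identity in~\ref{enum:th:h_UV_VU:dbl_cosest}. The only change from the invariant case is this single inversion, which is forced by the interchange of the two bottom columns.

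For the converse~\ref{enum:th:h_UV_VU:dbl_cosest}$\Rightarrow$\ref{enum:th:h_UV_VU:diff}, given $\adif\in\DiffUVW$ and $\bdif\in\DiffVUW$ satisfying the stated identity, I would reconstruct $\dif$ by the exchanging analogue of~\eqref{equ:h_Y_Y__reconstr_via_a_b} (with the roles of $\bUcmap$ and $\bVcmap$ swapped relative to the invariant case), namely
\[
\dif(\px) =
\begin{cases}
\bVcmap^{-1}\circ\adif\circ\aUcmap(\px), & \px\in\Udomain, \\
\bUcmap^{-1}\circ\bdif\circ\aVcmap(\px), & \px\in\Vdomain,
\end{cases}
\]
and then verify that $\dif$ is well defined, a homeomorphism with $\dif(\Udomain)=\Vdomain$ and $\dif(\Vdomain)=\Udomain$, and a $\Ck$-diffeomorphism whose coordinate representations are exactly $\adif$ and $\bdif$. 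The main obstacle is the well-definedness on $\Wdomain$: the two branches must agree there, and a short computation shows that their equality amounts to $\Btrmap=\hadif\circ\aUcmap\circ\aVcmap^{-1}\circ\hbdif^{-1}$, which is exactly what the identity of~\ref{enum:th:h_UV_VU:dbl_cosest} unwinds to after substituting $\Atrmap=\aVcmap\circ\aUcmap^{-1}$ and $\Btrmap=\bVcmap\circ\bUcmap^{-1}$. Everything else—continuity, bijectivity of each branch, and the fact that the two displayed branches are the coordinate representations of $\dif$ and hence transport the $\Ck$-diffeomorphism property of $\adif,\bdif$ to $\dif$—is routine and parallels the invariant case already treated in Theorem~\ref{th:CkLUV_HLUV}.
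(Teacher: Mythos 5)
Your proof is correct and takes essentially the same approach as the paper: the paper's own proof merely displays the exchanging diagram~\eqref{equ:h_UV_VU__L_general} and defers to the argument of part~\ref{enum:lm:diffs_of_Y:VU} of Lemma~\ref{lm:diffs_of_Y}, which is exactly the two-step argument (coordinate representations plus commutativity of the middle vertical rectangle for one implication, the swapped-chart reconstruction formula for the other) that you spell out. Your well-definedness computation on $\Wdomain$ indeed unwinds to the identity in~\ref{enum:th:h_UV_VU:dbl_cosest}, so the details you supply are precisely the ones the paper leaves to the reader.
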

\begin{proof}
Similarly to Theorem~\ref{th:CkLUV_HLUV}, the proof of our lemma almost literally repeats the arguments of statement~\ref{enum:lm:diffs_of_Y:VU} of Lemma~\ref{lm:diffs_of_Y}.
Let us just note that every $\dif\colon\mpair{\Aspace}{\Aatlas}\to\mpair{\Aspace}{\Batlas}$ exchanging $\Udomain$ and $\Vdomain$ yields the following commutative diagram which differs from~\eqref{equ:h_UU_VV__L_general} by reversing the last two rows:
\begin{equation}\label{equ:h_UV_VU__L_general}
\begin{gathered}
\xymatrix@C=4.5em@R=1.5em{
    \Udomain
        \ar@{-->}@/_6ex/[ddd]_-{\adif}
    &
    \Wdomain
        \ar@{_(->}[l]_-{\wuinc}
        \ar[r]^-{\Atrmap = \aVcmap \circ \aUcmap^{-1}}
        \ar@{-->}@/^2.5ex/[ddd]^-{\erestr{\adif}{\Wdomain}}
    &
    \Wdomain
        \ar@{^(->}[r]^-{\wvinc}
        \ar@{-->}@/_2.5ex/[ddd]_-{\erestr{\bdif}{\Wdomain}}
    &
    \Vdomain
        \ar@{-->}@/^6ex/[ddd]^-{\bdif}
    \\
    %%%%%%
    \Udomain
        \ar[u]_-{\aUcmap}
        \ar[d]_-{\erestr{\dif}{\Udomain}}
    &
    \Wdomain
        \ar@{_(->}[l]_-{\wuinc}
        \ar@{=}[r]
        \ar[u]^-{\erestr{\aUcmap}{\Wdomain}}
        \ar[d]_-{\erestr{\dif}{\Wdomain}}
    &
    \Wdomain
        \ar@{^(->}[r]^-{\wvinc}
        \ar[u]_-{\erestr{\aVcmap}{\Wdomain}}
        \ar[d]^-{\erestr{\dif}{\Wdomain}}
    &
    \Vdomain
        \ar[u]^-{\aVcmap}
        \ar[d]^-{\erestr{\dif}{\Vdomain}}
    \\
    %%%%%%%%%
    \Vdomain
        \ar[d]^-{\bVcmap}
    &
    \Wdomain
        \ar@{_(->}[l]_-{\wvinc}
        \ar@{=}[r]
        \ar[d]_-{\erestr{\bVcmap}{\Wdomain}}
    &
    \Wdomain
        \ar@{^(->}[r]^-{\wuinc}
        \ar[d]^-{\erestr{\bUcmap}{\Wdomain}}
    &
    \Udomain
        \ar[d]_-{\bUcmap}
    \\
    %%%%%%%%%%
    \Vdomain
    &
    \Wdomain
        \ar@{_(->}[l]_-{\wvinc}
    &
    \Wdomain
        \ar@{^(->}[r]^-{\wuinc}
        \ar[l]_-{\, \Btrmap = \bVcmap\circ\bUcmap^{-1}}
    &
    \Udomain
}
\end{gathered}
\end{equation}
We leave the details for the reader.
\end{proof}

Now the bijections~\eqref{equ:bij_DiffUV_DiffVU} allow to rephrase Lemma~\ref{lm:h_UV_VU} in terms of the group $\DiffUW$ of diffeomorphisms of $\Udomain$ leaving $\Wdomain$ invariant.
Namely, recall that we denoted by $\EDiffWU$ the subgroup of $\DiffW$ consisting of $\Ck$-diffeomorphisms which can be extended to $\Ck$-diffeomorphisms of all on $\Udomain$, see~\eqref{equ:EdiffWU}.

Our next step is to define a certain action of $\EDiffWU\wr\bZ_{2}$ on $\DiffW$ and construct a bijection between $\CkAUV\,/\,\HomeoLPartUV$ and the orbits set $\DiffW/\bigl(\EDiffWU\wr\bZ_{2}\bigr)$, similarly to the second part of Theorem~\ref{th:Ck_str_on_Y}.
First we make the following observation motivating the definition of that action.

\begin{subremark}\rm
Let $\dif\in\HomeoLPartUV$, and $\adif$ and $\bdif$ be the coordinate representation of $\restr{\dif}{\Udomain}$ and $\restr{\dif}{\Vdomain}$ with respect to the atlases $\Uatlas$ and $\Vatlas$.
Then, depending on whether $\dif$ preserves $\Udomain$ and $\Vdomain$ or exchanges them, $\adif$ is either a map $\Udomain\to\Udomain$ or a map $\Udomain\to\Vdomain$, and similarly for $\bdif$.
In particular, the domain of $\adif$ is always $\Udomain$, while the domain of $\bdif$ is $\Vdomain$.

In turn, let $\hadif=\erestr{\adif}{\Wdomain}$ and $\hbdif=\erestr{\bdif}{\Wdomain}$ be the restrictions of $\adif$ and $\bdif$ to $\Wdomain$ respectively.
Since $\adif$ and $\bdif$ preserve $\Wdomain$, we have that $\hadif,\hbdif\in\DiffW$, and thus they ``forget'' the domain and target spaces of $\adif$ and $\bdif$.
We can ``restore'' that information as follows.

First, introduce a number $\delta=+1$ if $\dif$ preserves $\Udomain$ and $\Vdomain$, and $\delta=-1$ whenever $\dif$ exchanges them.
Now if $\delta=+1$, $\hadif$ is a self-diffeomorphism of $\Wdomain$ regarded as a subset of $\Udomain$ and thus belongs to $\EDiffWU$.
On the other hand, $\hbdif$ is a self-diffeomorphism of $\Wdomain$ regarded as a subset of $\Vdomain$.
Therefore, in order to make $\hbdif$ a ``proper'' element of $\EDiffWU$, we should replace it with $\hbdif'=\uvdif^{-1}\circ\hbdif\circ\uvdif$.

In the case $\delta=-1$, $\hadif$ is a self-diffeomorphism of $\Wdomain$ so that the domain of $\hadif$ is contained in $\Udomain$, while the target is in $\Vdomain$.
Therefore,  in order to make it an element of $\EDiffWU$, we should replace $\hadif$ with $\hadif'=\uvdif^{-1}\circ\hadif$.
Similarly, $\hbdif$ should be replaced with $\hbdif'=\hbdif\circ\uvdif$.

Analogously, the transition maps $\Atrmap$ and $\Btrmap$ are diffeomorphisms from the subset $\Wdomain \subset\Udomain$ onto the subset $\Wdomain\subset\Vdomain$.
Hence, in order to regard them self-diffeomorphisms of $\Wdomain$ as a subset of $\Udomain$, one should replace them with $\eta^{-1}\Atrmap$ and $\eta^{-1}\Btrmap$ respectively.

Finally, we obtain the following commutative diagrams and the identities showing that the triples $(\hadif', \hbdif',\delta)$ ``act'' on ``adapted'' transition maps $\eta^{-1}\Atrmap$ and $\eta^{-1}\Btrmap$ similarly to the action from Lemma~\ref{lm:diffs_of_Y}:
\begin{equation}\label{equ:motivation_for_actions}
\begin{array}{ccc}
\xymatrix{
    \Udomain & \Vdomain & \Udomain \\
    \Wdomain \ar@{^(->}[u]^-{\wuinc}
             \ar[r]^-{\Atrmap}
             \ar[d]_-{\hadif' = \hadif}
             \ar@{}[dr]|{\bullet} &
    \Wdomain \ar@{^(->}[u]^-{\wvinc} \ar[d]^-{\hbdif} &
    \Wdomain \ar@{^(->}[u]^-{\wuinc}
             \ar[l]_-{\uvdif}
             \ar[d]^-{\hbdif'}
    \\
    \Wdomain \ar@{_(->}[d]_-{\wuinc}
             \ar[r]^-{\Btrmap}
             &
    \Wdomain \ar@{_(->}[d]_-{\wvinc} &
    \Wdomain \ar@{_(->}[d]_-{\wuinc}
             \ar[l]_-{\uvdif}
    \\
    \Udomain & \Vdomain & \Udomain
}
&\qquad &
\xymatrix{
    & \Udomain & \Vdomain & \Udomain
    \\
    & \Wdomain \ar@{^(->}[u]^-{\wuinc}
             \ar[r]^-{\Atrmap}
             \ar[d]_-{\hadif} \ar[dl]_-{\hadif'}
             \ar@{}[dr]|{\bullet}
             &
    \Wdomain \ar@{^(->}[u]^-{\wvinc} \ar[d]^-{\hbdif}  &
    \Wdomain \ar@{^(->}[u]^-{\wuinc}
             \ar[l]_-{\uvdif}
             \ar[dl]^-{\hbdif'}
    \\
    \Wdomain \ar@{_(->}[d]_-{\wuinc} \ar[r]^-{\uvdif} &
    \Wdomain \ar@{_(->}[d]_-{\wvinc}
              &
    \Wdomain \ar@{_(->}[d]_-{\wuinc} \ar[l]_-{\Btrmap}
    \\
    \Udomain & \Vdomain & \Udomain &
} \\
\uvdif^{-1} \circ \Btrmap            = \hbdif' \circ (\uvdif^{-1}\circ\Atrmap) \circ (\hadif')^{-1} &&
(\uvdif^{-1}\circ \Btrmap^{-1})^{-1} = \hbdif' \circ (\uvdif^{-1}\circ\Atrmap) \circ (\hadif')^{-1}
\end{array}
\end{equation}
Notice that the latter identities are equivalent to the commutativity of the corresponding squares marked by the symbol ``$\bullet$''.
\end{subremark}

The following theorem classifies the orbits space $\CkAUV\,/\,\HomeoLPartUV$.
To simplify notations, in the following theorem we omit the symbols of composition ``$\circ$'' and restriction ``$\erestr{}{\Wdomain}$'' to $\Wdomain$, since all mentioned maps leave $\Wdomain$ invariant.

\begin{subtheorem}\label{th:act_DUWZ2_DW}
{\rm 1)}~For every $\eta\in\DiffUVW$ there is an action of $\EDiffWU\wr\bZ_{2}$ on $\DiffW$ given by the following rule: if $(\adif,\bdif,\delta)\in\EDiffWU\wr\bZ_{2}$ and $\gdif\in\DiffW$, then
\begin{equation*}
(\adif,\bdif,\delta)\cdot\gdif =
\begin{cases}
\uvdif (\bdif\uvdif^{-1} \gdif \adif^{-1}) =
(\uvdif\bdif\uvdif^{-1}) \gdif \adif^{-1},  & \delta = +1,
\\
\uvdif (\bdif\uvdif^{-1} \gdif \adif^{-1})^{-1}  =
(\uvdif\adif)\gdif^{-1} (\uvdif\bdif^{-1}), & \delta = -1.
\end{cases}
\end{equation*}

{\rm 2)}~The partition of $\EDiffWU$ into the orbits of this action does not depend on a particular choice of $\uvdif\in\DiffUVW$, and will be denoted by
\[
    \dbli{\EDiffWU}{\DiffW}
\]
and called \term{$(\EDiffWU,\pm)$-double cosets}.

{\rm 3)}~The correspondence
\begin{equation*}
\Aatlas=\{(\Udomain,\Ucmap),(\Vdomain,\Vcmap)\}
\ \longmapsto \
\Vcmap^{-1}\circ\Ucmap
\end{equation*}
associating to each $\UVAtlas$-atlas $\Aatlas$ on $\Aspace$ its transition map $\Atrmap\in\DiffW$ yields an \term{injection} $\mu$
\begin{itemize}
\item
of the set $\CkAUV\,/\,\HomeoLPartUV$ of all $\Ck$-structures on $\Aspace$ induced by some $\UVAtlas$-atlas up to a $\Ck$-diffeomorphism leaving invariant $\Udomain$ and $\Vdomain$ or exchanging them
\item
into the set \ $\dbli{\EDiffWU}{\DiffW}$ \ of $(\EDiffWU,\pm)$-double cosets.
\end{itemize}
Moreover, $\mu$ is a bijection if and only if every $\gdif\in\DiffW$ is a transition map for some $\UVAtlas$-atlas $\Aatlas$.
\end{subtheorem}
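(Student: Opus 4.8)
The plan is to reduce everything to the \emph{standard} wreath‑product action $(\adif,\bdif,\delta)\ast\gdif=(\bdif\gdif\adif^{-1})^{\delta}$ of $W\wr\bZ_{2}$ on $D$ recalled in the introduction, applied to the subgroup $W:=\EDiffWU$ of $D:=\DiffW$ (it is a subgroup, being the image of the restriction homomorphism $\rho_{\Udomain,\Wdomain}$). Write $w:=\erestr{\uvdif}{\Wdomain}$; since $\uvdif(\Wdomain)=\Wdomain$ and $\Watlas$ is the restriction of both $\Uatlas$ and $\Vatlas$, this $w$ is a well‑defined element of $\DiffW$, and it is exactly what the symbol $\uvdif$ denotes in the displayed formula of~1) once the restrictions to $\Wdomain$ are suppressed. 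For part~1) I would introduce the left translation $S\colon\DiffW\to\DiffW$, $S(\gdif)=w^{-1}\gdif$, which is a bijection, and check by a one‑line telescoping of the $w$‑factors (separately for $\delta=\pm1$) the intertwining identity $S\big((\adif,\bdif,\delta)\cdot\gdif\big)=(\adif,\bdif,\delta)\ast S(\gdif)$. Since a set bijection pulls back a group action to a group action, this shows at once that the formula in~1) is an action, with no case analysis of the wreath‑product multiplication.

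For part~2), let $\uvdif,\uvdif'\in\DiffUVW$ and put $w'=\erestr{\uvdif'}{\Wdomain}$. The key observation is that $w^{-1}w'=\erestr{(\uvdif^{-1}\uvdif')}{\Wdomain}\in\EDiffWU=W$, because $\uvdif^{-1}\uvdif'\in\DiffUW$. By part~1) the $\uvdif$‑ and $\uvdif'$‑actions are $S_{w}$‑ and $S_{w'}$‑conjugate to the \emph{same} action $\ast$, whose orbits are the sets $WgW\cup Wg^{-1}W$. Hence the $\uvdif$‑orbit of $\gdif$ is $wO$, where $O$ is the $\ast$‑orbit of $w^{-1}\gdif$; I would then use $uW=Wu=W$ for $u=w^{-1}w'\in W$ to check that $uO'=O$, where $O'$ is the $\ast$‑orbit of $(w')^{-1}\gdif$. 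This gives $w'O'=w(uO')=wO$, so the two orbit partitions coincide.

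Part~3) is where the real work sits, and it combines the two already‑proved equivalences. The map $\mu$ sends the class of $\Aatlas$ to the $(\EDiffWU,\pm)$‑double coset of $\Atrmap$. To see $\mu$ is well defined I would split a $\Ck$‑diffeomorphism $\dif\in\HomeoLPartUV$ into two cases. If $\dif$ preserves $\Udomain$ and $\Vdomain$, Theorem~\ref{th:CkLUV_HLUV} gives $\Btrmap=\hbdif\,\Atrmap\,\hadif^{-1}$ with $\hadif\in\EDiffWU$ and $\hbdif\in\EDiffWV$; here I would record the conjugacy $\EDiffWV=w\,\EDiffWU\,w^{-1}$ (from $\uvdif^{-1}(\cdot)\uvdif$ carrying $\DiffVW$ onto $\DiffUW$) to write $\hbdif=w\,\bdif\,w^{-1}$ and recognize $\Btrmap=(\hadif,\bdif,+1)\cdot\Atrmap$. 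If $\dif$ exchanges $\Udomain$ and $\Vdomain$, Lemma~\ref{lm:h_UV_VU} gives $\Btrmap=(\erestr{\bdif}{\Wdomain}\,\Atrmap\,\erestr{\adif}{\Wdomain}^{-1})^{-1}$ with $\adif\in\DiffUVW$, $\bdif\in\DiffVUW$; the adaptation of diagram~\eqref{equ:motivation_for_actions}, namely $\hadif':=w^{-1}\erestr{\adif}{\Wdomain}$ and $\hbdif':=\erestr{\bdif}{\Wdomain}\,w$, lands in $\EDiffWU$ (they extend over $\Udomain$ via $\uvdif^{-1}\adif$ and $\bdif\,\uvdif$) and yields $\Btrmap=(\hadif',\hbdif',-1)\cdot\Atrmap$. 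Thus $\Atrmap$ and $\Btrmap$ always lie in one $(\EDiffWU,\pm)$‑double coset, so $\mu$ is well defined; reading the converse implications of Theorem~\ref{th:CkLUV_HLUV} and Lemma~\ref{lm:h_UV_VU} backwards gives injectivity.

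Finally, for the surjectivity criterion I would prove that the set $\mathcal{T}\subseteq\DiffW$ of transition maps of $\UVAtlas$‑atlases is a \emph{union} of $(\EDiffWU,\pm)$‑double cosets: starting from an atlas realizing $\Atrmap$ and any $\Btrmap$ in its coset, I pre‑ and post‑compose the coordinate homeomorphisms $\Ucmap,\Vcmap$ with extensions in $\DiffUW$, $\DiffVW$ (and, in the $\delta=-1$ case, with $\uvdif$) to build an $\UVAtlas$‑atlas whose transition map is exactly $\Btrmap$. Given this, $\mu$ is onto iff every coset meets $\mathcal{T}$ iff $\mathcal{T}=\DiffW$, i.e.\ iff every $\gdif\in\DiffW$ is a transition map, which is the asserted criterion. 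I expect the main obstacle to be precisely this bookkeeping in part~3): correctly fitting the orientation‑preserving case (a $(\EDiffWV,\EDiffWU)$‑relation) and the orientation‑reversing case (an inverse of such a relation) as the $\delta=+1$ and $\delta=-1$ pieces of the \emph{single} twisted action of part~1), and then showing that $\mathcal{T}$ is saturated for that action.
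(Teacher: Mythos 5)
Your parts 1) and 2), and the well-definedness and injectivity of $\mu$ in part 3), are correct. For 1)--2) you take a genuinely different route from the paper: the paper verifies the action axioms by checking all four sign-cases of the wreath multiplication by hand, and proves 2) by rewriting the $\uvdif$-action of each triple as the $\bar{\uvdif}$-action of a modified triple; you instead conjugate the twisted formula to the standard action $(\adif,\bdif,\delta)\ast\gdif=(\bdif\gdif\adif^{-1})^{\delta}$ via the left translation $S(\gdif)=w^{-1}\gdif$, $w=\erestr{\uvdif}{\Wdomain}$, and compare two choices of $\uvdif$ through $w^{-1}w'\in\EDiffWU$. That is shorter and more conceptual, and your intertwining identity is correct in both cases. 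Your treatment of well-definedness and injectivity in 3) coincides in substance with the paper's: the adaptations $\hbdif=w\bdif w^{-1}$, $\hadif'=w^{-1}\erestr{\adif}{\Wdomain}$, $\hbdif'=\erestr{\bdif}{\Wdomain}\,w$ are exactly the content of the paper's diagram~\eqref{equ:motivation_for_actions}.

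The gap is in the last step, the ``only if'' half of the bijectivity criterion. Your claim that the set $\mathcal{T}$ of transition maps is \emph{unconditionally} a union of $(\EDiffWU,\pm)$-double cosets is false, and the proposed construction cannot produce it. The $\delta=+1$ half is fine (compose the charts with extensions in $\DiffUW$ and with $\uvdif\,(\cdot)\,\uvdif^{-1}\in\DiffVW$). But for $\delta=-1$ you must realize $w\Atrmap^{-1}w$ as a transition map, and ``composing the coordinate homeomorphisms with $\uvdif$'' does not typecheck: charts are self-homeomorphisms of $\Udomain$ and of $\Vdomain$, whereas $\uvdif\circ\Ucmap\colon\Udomain\to\Vdomain$ is not a chart. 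The only admissible modifications are conjugations such as $\uvdif\circ\Ucmap\circ\uvdif^{-1}$, and the resulting atlas has transition map $w\,\erestr{\Ucmap}{\Wdomain}\,w^{-2}\,\erestr{\Vcmap}{\Wdomain}^{-1}\,w$, not $w\Atrmap^{-1}w$: one factor of $w$ always lands on the wrong side. This is not repairable bookkeeping: realizing $(\id_{\Wdomain},\id_{\Wdomain},-1)\cdot\id_{\Wdomain}=w^{2}$ (which lies in the $\pm$-coset of the transition map $\id_{\Wdomain}$ of $\Latlas$) as a transition map is \emph{equivalent} to the existence of a homeomorphism of $\Aspace$ exchanging $\Udomain$ and $\Vdomain$, and that may fail even though $\DiffUVW\ne\varnothing$. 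For instance, glue two copies of $\bR$ along a $\bZ$-indexed family of disjoint intervals identified according to a permutation of $\bZ$ that is not conjugate to its inverse by any translation or reflection (e.g.\ a disjoint $3$-cycle and $5$-cycle placed asymmetrically); there $\mathcal{T}$ is not saturated, since $w^{2}$ is equivalent to $\id_{\Wdomain}\in\mathcal{T}$ but is not a transition map.

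The criterion itself is still true, but saturation must be \emph{derived from} the surjectivity hypothesis rather than asserted. If $\mu$ is onto, note that the $\pm$-coset of $w$ collapses to $\EDiffWV\,w\,\EDiffWU=w\,\EDiffWU$, so it contains some transition map $\Atrmap$, giving $w=\Atrmap e^{-1}$ with $e\in\EDiffWU$; hence $w\in R_{\Vdomain}R_{\Udomain}$, where $R_{\Udomain}$, $R_{\Vdomain}$ denote the groups of restrictions to $\Wdomain$ of $\Wdomain$-preserving \emph{homeomorphisms} of $\Udomain$, $\Vdomain$. Writing $w=sr$ with $s\in R_{\Vdomain}$, $r\in R_{\Udomain}$ and using $wR_{\Udomain}w^{-1}=R_{\Vdomain}$, one gets $sR_{\Udomain}s^{-1}=R_{\Vdomain}$, so $r=s^{-1}\sigma s$ for some $\sigma\in R_{\Vdomain}$, whence $w^{2}=srsr=\sigma s^{2}r\in R_{\Vdomain}R_{\Udomain}$. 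This membership is exactly what makes the $\delta=-1$ realization (hence saturation, hence $\mathcal{T}=\DiffW$) go through. Note finally that the paper's own proof stops after injectivity and is silent on this ``moreover'' clause, so your attempt addresses more than the paper does --- but as written, your argument for that clause has a genuine hole.
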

\begin{proof}
1) Let $\gdif\in\EDiffWU$.
Note that the unit of $\EDiffWU\wr\bZ_{2}$ is $e = (\id_{\Wdomain}, \id_{\Wdomain}, +1)$ and $e\cdot \gdif = \gdif$.
Therefore, it remains to check associativity axiom of the action.
Let $\adif,\bdif,\cdif,\ddif\in\EDiffWU$.
Then
\begin{align*}
(\cdif,\ddif,1)\cdot\bigl( (\adif,\bdif,1)\cdot\gdif \bigr)
&= (\cdif,\ddif,1)\cdot (\uvdif\bdif\uvdif^{-1}\gdif\adif^{-1})
 = (\uvdif\ddif\uvdif^{-1}) (\uvdif\bdif\uvdif^{-1}\gdif\adif^{-1})\cdif^{-1} \\
&= \uvdif(\ddif\bdif)\uvdif^{-1}\gdif(\cdif\adif)^{-1} =
   (\cdif\adif, \ddif\bdif, 1)\cdot \gdif =
   \bigl[ (\cdif,\ddif,1)(\adif,\bdif,1)\bigr]\cdot\gdif
\\[1mm]
(\cdif,\ddif,-1)\cdot\bigl( (\adif,\bdif,1)\cdot\gdif \bigr)
&= (\cdif,\ddif,-1)\cdot(\uvdif\bdif\uvdif^{-1}\gdif\adif^{-1})
 = (\uvdif\cdif) (\uvdif\bdif\uvdif^{-1}\gdif\adif^{-1})^{-1} (\uvdif\ddif^{-1}) \\
&= \uvdif\cdif \adif  \gdif^{-1}\uvdif\bdif^{-1}\uvdif^{-1} \uvdif\ddif^{-1}
 = \uvdif (\cdif \adif) \, \gdif^{-1} \, \uvdif (\ddif\bdif)^{-1} \\
&= (\cdif\adif,\ddif\bdif,-1)\cdot \gdif
 = \bigl[ (\cdif,\ddif,-1)(\adif,\bdif,1)\bigr]\cdot\gdif,
\\[1mm]
(\cdif,\ddif,1)\cdot\bigl( (\adif,\bdif,-1)\cdot\gdif \bigr)
&= (\cdif,\ddif,1)\cdot (\uvdif\adif\, \gdif^{-1}\, \uvdif\bdif^{-1}) =
   (\uvdif\ddif\uvdif^{-1})(\uvdif\adif\, \gdif^{-1}\, \uvdif\bdif^{-1})\cdif^{-1}  \\
&= (\uvdif\ddif\adif) \gdif^{-1} \uvdif(\cdif\bdif)^{-1}
 = (\ddif\adif,\cdif\bdif,-1)\cdot \gdif  \\
&= \bigl[ (\cdif,\ddif,1)(\adif,\bdif,-1)\bigr]\cdot\gdif,
\\[1mm]
(\cdif,\ddif,-1)\cdot\bigl( (\adif,\bdif,-1)\cdot\gdif \bigr)
&= (\cdif,\ddif,-1)\cdot (\uvdif\adif\, \gdif^{-1}\, \uvdif\bdif^{-1}) =
   (\uvdif\cdif)(\uvdif\adif\, \gdif^{-1}\, \uvdif\bdif^{-1})^{-1}(\uvdif\ddif^{-1})  \\
&= \uvdif\cdif\bdif\uvdif^{-1}\,  \gdif \, \adif^{-1}\uvdif^{-1}\uvdif\ddif^{-1}
 = (\uvdif\cdif\bdif\uvdif^{-1})  \gdif (\ddif\adif)^{-1} \\
&= (\ddif\adif,\cdif\bdif,1)\cdot \gdif
 = \bigl[ (\cdif,\ddif,-1)(\adif,\bdif,-1)\bigr]\cdot\gdif.
\end{align*}

\newcommand\buvdif{\bar{\uvdif}}
\newcommand\bact{*}
2) Let $\buvdif\colon\mpair{\Udomain}{\Uatlas}\to\mpair{\Vdomain}{\Vatlas}$ be some other $\Cr{\kk}$-diffeomorphism such that $\buvdif(\Wdomain) = \Wdomain$.
For $(\adif,\bdif,\delta)\in\EDiffWU\wr\bZ_{2}$ and $\gdif\in\DiffW$ denote by $(\adif,\bdif,1)\cdot\gdif$ and $(\adif,\bdif,1) \bact \gdif$ the actions of $(\adif,\bdif,\delta)$ on $\gdif$ constructed via $\uvdif$ and $\buvdif$.
Then $\gamma:=\buvdif^{-1}\uvdif$ belongs to $\DiffUW$, whence
for any $\adif,\bdif\in\EDiffWU$ and $\gdif\in\DiffW$ we have that
\begin{align*}
(\adif,\bdif,1)\cdot \gdif &=
(\uvdif\bdif\uvdif^{-1}) \ \gdif \ \adif^{-1} =
\buvdif(\buvdif^{-1}\uvdif)\bdif(\uvdif^{-1}\buvdif)\buvdif^{-1} \ \gdif \ \adif^{-1} = \\ &=
\buvdif(\gamma\bdif\gamma^{-1}) \ \gdif \ \adif^{-1} =
(\adif,\gamma\bdif\gamma^{-1},1) * \gdif,
\\
(\adif,\bdif,-1)\cdot \gdif &=
(\uvdif\adif) \ \gdif^{-1} \ \uvdif\bdif^{-1} =
\buvdif(\buvdif^{-1}\uvdif)\adif \ \gdif^{-1} \ \buvdif(\buvdif^{-1}\uvdif)\bdif^{-1} = \\ &=
\buvdif(\gamma\adif) \ \gdif \ \buvdif(\gamma\bdif^{-1}) =
(\gamma\adif,\bdif\gamma^{-1},-1) * \gdif.
\end{align*}
This shows that the orbits of $\gdif$ with respect to the actions defined via $\uvdif$ and $\buvdif$ coincide.

3) For an $\UVAtlas$-atlas $\Aatlas$ denote by $[\Aatlas]$ its equivalence class in
\[ \CkAUV\,/\,\HomeoLPartUV,\]
and let $[\Atrmap]$ be the $(\EDiffWU,\pm)$-double coset in $\dbli{\EDiffWU}{\DiffW}$ of its transition map $\Atrmap$.

Let $\Aatlas$ and $\Batlas$ be two $\UVAtlas$-atlases.
Then by Theorem~\ref{th:CkLUV_HLUV} and Lemma~\ref{lm:h_UV_VU} the following conditions are equivalent:
\begin{enumerate}[label={\rm(\alph*)}]
\item\label{enum:exch:impl_h_LA_LB}
there exists a $\Ck$-diffeomorphism $\dif\colon\mpair{\Aspace}{\Aatlas}\to\mpair{\Aspace}{\Batlas}$
\item\label{enum:exch:gb1__b_ga_ainv}
$\erestr{\bdif}{\Wdomain} \circ \Atrmap \circ \erestr{\adif^{-1}}{\Wdomain}=\Btrmap$ for some $\adif\in\DiffUW$ and $\bdif\in\DiffVW$,
or
$\erestr{\bdif}{\Wdomain} \circ \Atrmap \circ \erestr{\adif^{-1}}{\Wdomain}=\Btrmap$ for some $\adif\in\DiffUVW$ and $\bdif\in\DiffVUW$.
\end{enumerate}
Notice that the relations~\ref{enum:exch:gb1__b_ga_ainv} just mean commutativity of the respective squares in the Diagram~\eqref{equ:motivation_for_actions}, i.e.\ the identities under those diagrams.
In turn, those identities are equivalent to another condition:
\begin{enumerate}[label={\rm(\alph*)}, resume]
\item\label{enum:exch:orb}
the transition maps $\Atrmap$ and $\Btrmap$ belong to the same orbit in $\dbli{\EDiffWU}{\DiffW}$.
\end{enumerate}

Now again, the implication~\ref{enum:exch:impl_h_LA_LB}$\Rightarrow$\ref{enum:exch:orb}, means that the correspondence
\[
\CkAUV\,/\,\HomeoLUV \ \ni \ [\Aatlas] \ \xrightarrow{~~\mu~~} \ [\Atrmap] \ \in \ \dbli{\EDiffWU}{\DiffW}
\]
is well-defined, while the inverse implication~\ref{enum:exch:orb}$\Rightarrow$\ref{enum:exch:impl_h_LA_LB} shows that $\mu$ is injective.
\end{proof}

\subsection{Application. Differentiable structures on the line $\DLine$ with two origins}
\label{sect:diff_struct_on_L}
Let
\[
    \DLine = (\bR\times\{0,1\})/\{ (x,0)\sim(x,1) \ \text{for} \ x\in\Rzp\}.
\]
be the topological space obtained by gluing two copies of $\bR$ via the identity homeomorphism of $\Rzp$, see Figure~\ref{fig:LY}.
Let also $\pi\colon\RTwoCopies \to \DLine$ be the corresponding quotient map.
Denote
\begin{gather*}
    \Udomain = \pi(\bR\times\{0\}),\qquad
    \Vdomain = \pi(\bR\times\{1\}),\\
    \Wdomain = \Udomain \cap \Vdomain
    = \pi\bigl((\Rzp) \times\{0\}\bigr)
    = \pi\bigl((\Rzp) \times\{1\}\bigr).
\end{gather*}
As in the case of $\Ylet$, it is easy to see that every homeomorphism $\dif\colon\DLine\to\DLine$ either leaves $\Udomain$ and $\Vdomain$ invariant or exchanges them.
In particular, $\dif$ always preserves $\Wdomain$.
\begin{figure}[htbp!]
\includegraphics[height=2.5cm]{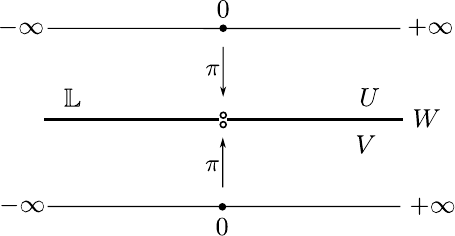}
\caption{Manifold $\DLine$}\label{fig:l_line}
\end{figure}
Moreover, the restriction of $\pi$ onto $\bR\times\{0\}$ and $\bR\times\{1\}$ are open embeddings,
\begin{align*}
    \canUcmap &= (\restr{\pi}{\bR\times\{0\}})^{-1}\colon\Udomain\to\bR\times\{0\} \equiv \bR, &
    \canVcmap &= (\restr{\pi}{\bR\times\{1\}})^{-1}\colon\Vdomain\to\bR\times\{1\} \equiv \bR,
\end{align*}
whence the inverse maps can be regarded as charts $(\Udomain,\canUcmap)$ and $(\Vdomain,\canVcmap)$ of $\Ylet$.
Note that the corresponding transition map $\canVcmap\circ\canUcmap^{-1} = \id_{\Rzp}$, whence the atlas $\CanonAtlas{\Ylet}=\{ (\Udomain,\canUcmap), (\Vdomain,\canVcmap) \}$ on $\DLine$ is $\Ck$ for all $\kk=1,\ldots,\infty$.

Let $\mathcal{D}:=\XDIFF{+,\kk}{}{\bR\setminus0}$ be the group of orientation-preserving $\Ck$-diffeomorphisms of $\bR\setminus0$, and $\mathcal{W}$ be its subgroup consisting of diffeomorphisms which can be extended to a $\Ck$-diffeomorphism of $\bR$.
Since $\canUcmap$ and $\canVcmap$ are diffeomorphisms, they yield isomorphisms
\begin{align*}
    &\canUcmap_{*}\colon\DiffUW \to \mathcal{D}, && \canUcmap_{*}(\dif) = \canUcmap\circ\dif\circ\canUcmap_{*}^{-1}, \\
    &\canVcmap_{*}\colon\DiffVW \to \mathcal{D}, && \canVcmap_{*}(\dif) = \canVcmap\circ\dif\circ\canVcmap_{*}^{-1}.
\end{align*}
It is also evident, that $\canUcmap_{*}(\EDiffWU) = \canVcmap_{*}(\EDiffWV) = \mathcal{W}$, whence $\canUcmap_{*}$ and $\canVcmap_{*}$ yield bijections
\begin{gather*}
\dbl{\EDiffWV}{\DiffW}{\EDiffWU}  \equiv \dbl{\mathcal{W}}{\mathcal{D}}{\mathcal{W}},\\
\dbli{\EDiffWV}{\DiffW}  \equiv \dbli{\mathcal{W}}{\mathcal{D}}.
\end{gather*}
Now Theorems~\ref{th:CkLUV_HLUV} and~\ref{th:act_DUWZ2_DW} imply show that $\dbl{\mathcal{W}}{\mathcal{D}}{\mathcal{W}}$ classify all $\Ck$-structures on $\DLine$, while $\dbli{\mathcal{W}}{\mathcal{D}}$ classifies such structures up to a $\Ck$-diffeomorphism leaving $\Udomain$ and $\Vdomain$ invariant.
This includes statement of Theorem~\ref{th:Ck_struct_on_L_summary}.

\begin{subremark}\rm
The assumption on diffeomorphisms $\dif\colon\Aspace\to\Aspace$ to preserve both $\Udomain$ and $\Vdomain$ looks rather restrictive, since usually there might exist a lot of other diffeomorphisms of $\Aspace$ which do not preserve $\Udomain$ and $\Vdomain$.
On the other hand, as we see for the line with two origins $\DLine$ and non-Hausdorff letter $\Ylet$, they do admit such cover $\Udomain\cup\Vdomain$ being invariant under homeomorphisms.
\end{subremark}

\begin{subremark}\rm
Suppose $\Aspace$ is a union of $n$ open subsets $\Udomain[i]$, $i=1,\ldots,n$, each endowed with a $\Ck$-atlas $\Uatlas_{i}$, so that those atlases are $\Ck$-compatible on the respective intersections.
Then \term{theoretically} one can apply Theorems~\ref{th:CkLUV_HLUV} and~\ref{th:act_DUWZ2_DW} to classify \term{by induction} such $\Ck$-structures on $\Aspace$ up to a $\Ck$-diffeomorphism leaving each $\Udomain[i]$ invariant.
Namely, at first one can describe such structures on $\Vdomain[1] = \Udomain[1] \cup \Udomain[2]$, then on $\Vdomain[2] := \Vdomain[1] \cup \Udomain[3]$, and so on.

Such a scheme appears in the classification of differentiable structures given by Munkres, see~\cite[Definition~6.9]{Munkres:AnnMath:1968} of \term{pasting a ball} into an oriented manifold.
Moreover, \cite[Theorem~6.12]{Munkres:AnnMath:1968} claims that differentiable structures on $\Aspace$ obtained by pasting a ball are in one-to one correspondence with a certain quotient $Q^{n}$ of the group $\Gamma^n$ of isotopy classes of diffeomorphisms of $S^{n-1}$ up to diffeomorphisms extendable to diffeomorphisms of $D^{n}$.

This statement is close to Theorem~\ref{th:CkLUV_HLUV}, but we will discuss the precise relationships in another paper.
\end{subremark}

\subsection{Application. Uniqueness of differentiable structures on $\bR$}
The following statement gives a sufficient conditions for the transitivity of the action of $\HomeoLUV$ on $\CkAUV$.
Its particular case is used in the proof of uniqueness of $\Ck$-structures on the real line, see Remark~\ref{rem:joinable_charts_on_R} below.

\begin{sublemma}\label{lm:diff_to_canon_struct}
Let $\Latlas=\{ (\Udomain, \id_{\Udomain}), (\Vdomain, \id_{\Vdomain})\}$ be the initial $\Ck$-atlas of $\Aspace$, see~\eqref{equ:atlas_C__U_V}.
Let also $\Aatlas=\{(\Udomain,\Ucmap),(\Vdomain,\Vcmap)\}$ be a $\UVAtlas$-atlas, and $\Atrmap = \Vcmap\circ\Ucmap^{-1}$ be its transition map.
Suppose that there exist an open neighborhood $\hUdomain$ of $\Aman := \Udomain\setminus\Vdomain$ and an open neighborhood $\hVdomain$ of $\Bman := \Vdomain\setminus\Udomain$, and a $\Ck$-diffeomorphism $\bdif\colon\Vdomain\to\Vdomain$ such that
\begin{enumerate}[label={\rm(\alph*)}]
\item\label{enum:lm:diff_to_canon_struct:aW_W} $\bdif(\Wdomain)=\Wdomain$;
\item\label{enum:lm:diff_to_canon_struct:a_fix_on_hU} $\bdif$ is fixed on $\hVdomain$;
\item\label{enum:lm:diff_to_canon_struct:a_WhV__h_WhV} $\restr{\bdif}{\Wdomain\cap\hVdomain} = \restr{\Atrmap}{\Wdomain\cap\hVdomain}$.
\end{enumerate}
Then there exists a $\Ck$-diffeomorphism $\adif\colon\Udomain\to\Udomain$ such that $\adif(\Wdomain)=\Wdomain$ and
\begin{equation}\label{equ:gA_b_idW_ainv}
    \Atrmap = \erestr{\bdif}{\Wdomain}\circ\erestr{\adif^{-1}}{\Wdomain}
            = \erestr{\bdif}{\Wdomain}\circ\id_{\Wdomain}\circ\erestr{\adif^{-1}}{\Wdomain}.
\end{equation}
In particular,
$\Atrmap$ and $\id_{\Wdomain}$ belong to the same $(\EDiffWV,\EDiffWU)$-double cosets in $\dbl{\EDiffWV}{\DiffW}{\EDiffWU}$, and therefore $\mpair{\Aspace}{\Latlas}$ and $\mpair{\Aspace}{\Aatlas}$ are $\Ck$-diffeomorphic.
In particular, the map
\[
\dif\colon\Aspace\to\Aspace,
\qquad
\dif = \begin{cases}
\adif\circ\Ucmap, & \text{on} \ \Udomain, \\
\bdif\circ\Vcmap, & \text{on} \ \Vdomain
\end{cases}
\]
is a $\Ck$-diffeomorphism $\mpair{\Aspace}{\Aatlas}\to\mpair{\Aspace}{\Latlas}$ which coincides with $\Ucmap$ on $\hUdomain$ and with $\Vcmap$ on $\hVdomain$.
\end{sublemma}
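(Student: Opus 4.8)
The plan is to reduce the statement to the construction of the single diffeomorphism $\adif$ and then to quote Theorem~\ref{th:CkLUV_HLUV}. Put $\hbdif:=\erestr{\bdif}{\Wdomain}$; by hypothesis~\ref{enum:lm:diff_to_canon_struct:aW_W} the map $\bdif$ lies in $\DiffVW$, so $\hbdif\in\EDiffWV\subset\DiffW$. The desired identity~\eqref{equ:gA_b_idW_ainv} prescribes the restriction of the sought $\adif$ uniquely: the only possible value of $\erestr{\adif}{\Wdomain}$ is $\hadif:=\Atrmap^{-1}\circ\hbdif\in\DiffW$, and with this choice $\Atrmap=\hbdif\circ\hadif^{-1}$ holds tautologically. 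Hence the whole content of the lemma is the claim that this $\Ck$-diffeomorphism $\hadif$ of $\mpair{\Wdomain}{\Watlas}$ extends to a $\Ck$-diffeomorphism $\adif\in\DiffUW$; once such an $\adif$ is available one has $\hadif\in\EDiffWU$, so the two factors of $\Atrmap$ lie in $\EDiffWV$ and $\EDiffWU$ respectively, whence $\Atrmap$ and $\id_{\Wdomain}$ represent one and the same $(\EDiffWV,\EDiffWU)$-double coset and $\mpair{\Aspace}{\Latlas}$, $\mpair{\Aspace}{\Aatlas}$ are $\Ck$-diffeomorphic by Theorem~\ref{th:CkLUV_HLUV}.

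The heart of the argument is to show that $\hadif$ is the identity on the trace of $\hUdomain$ on $\Wdomain$. Restricting~\ref{enum:lm:diff_to_canon_struct:a_fix_on_hU} to $\Wdomain\cap\hVdomain$ gives $\hbdif=\id_{\Wdomain}$ there, while~\ref{enum:lm:diff_to_canon_struct:a_WhV__h_WhV} gives $\hbdif=\Atrmap$ on the same set; comparing the two shows $\Atrmap=\id_{\Wdomain}$ on $\Wdomain\cap\hVdomain$, and therefore $\hadif=\Atrmap^{-1}\circ\hbdif=\id_{\Wdomain}$ on $\Wdomain\cap\hVdomain$. I would then invoke the branch-point geometry of $\Aspace$: a neighborhood of $\Aman=\Udomain\setminus\Wdomain$ and a neighborhood of $\Bman=\Vdomain\setminus\Wdomain$ trace out the \emph{same} end of $\Wdomain$, so that, after possibly shrinking $\hUdomain$, one has $\Wdomain\cap\hUdomain\subset\Wdomain\cap\hVdomain$; with this arrangement $\hadif=\id_{\Wdomain}$ on all of $\Wdomain\cap\hUdomain$. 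This comparison of the two traces on $\Wdomain$ is exactly where hypotheses~\ref{enum:lm:diff_to_canon_struct:aW_W}--\ref{enum:lm:diff_to_canon_struct:a_WhV__h_WhV} are genuinely used, and I expect it to be the main obstacle, since it is the non-Hausdorff feature of the configuration that makes the subsequent extension by the identity possible.

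Granting this, the construction of $\adif$ is a routine gluing. Because $\hUdomain$ is an open neighborhood of $\Aman=\Udomain\setminus\Wdomain$, the open sets $\hUdomain$ and $\Wdomain$ cover $\Udomain$; I define $\adif$ to equal the identity on $\hUdomain$ and $\hadif$ on $\Wdomain$. The two prescriptions agree on the overlap $\hUdomain\cap\Wdomain$ by the previous paragraph, and each is $\Ck$ with respect to $\Uatlas$, so $\adif$ is a well-defined $\Ck$ map; gluing the identity with $\hadif^{-1}$ in the same way produces a $\Ck$ inverse. Thus $\adif\in\DiffUW$, with $\adif(\Aman)=\Aman$, $\adif(\Wdomain)=\Wdomain$ and $\erestr{\adif}{\Wdomain}=\hadif$, which is what was needed.

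Finally I would assemble the conclusion. By construction $\Atrmap=\hbdif\circ\hadif^{-1}=\erestr{\bdif}{\Wdomain}\circ\erestr{\adif^{-1}}{\Wdomain}$, which is precisely~\eqref{equ:gA_b_idW_ainv}, so $\Atrmap$ and $\id_{\Wdomain}$ belong to one $(\EDiffWV,\EDiffWU)$-double coset and the two structures are $\Ck$-diffeomorphic by Theorem~\ref{th:CkLUV_HLUV}. The explicit diffeomorphism is the one furnished by the reconstruction~\eqref{equ:h_Y_Y__reconstr_via_a_b}, namely the map $\dif$ equal to $\adif\circ\Ucmap$ on $\Udomain$ and to $\bdif\circ\Vcmap$ on $\Vdomain$, the identity~\eqref{equ:gA_b_idW_ainv} being exactly the compatibility that makes $\dif$ a well-defined $\Ck$-diffeomorphism $\mpair{\Aspace}{\Aatlas}\to\mpair{\Aspace}{\Latlas}$. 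Since $\adif$ is the identity on $\hUdomain$ and, by~\ref{enum:lm:diff_to_canon_struct:a_fix_on_hU}, $\bdif$ is the identity on $\hVdomain$, we read off $\dif=\Ucmap$ on $\hUdomain$ and $\dif=\Vcmap$ on $\hVdomain$, completing the proof.
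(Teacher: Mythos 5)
Your overall construction is the same as the paper's: set $\hadif:=\Atrmap^{-1}\circ\erestr{\bdif}{\Wdomain}$, so that \eqref{equ:gA_b_idW_ainv} holds tautologically, show that $\hadif$ is the identity on $\Wdomain\cap\hUdomain$, extend it by the identity over $\hUdomain$ (using $\Udomain=\hUdomain\cup\Wdomain$) to obtain $\adif\in\DiffUW$, and conclude via the double-coset criterion of Theorem~\ref{th:CkLUV_HLUV}; this is precisely the paper's three-line proof. The divergence --- and the genuine gap --- is in how you establish $\hadif=\id$ on $\Wdomain\cap\hUdomain$. From conditions~\ref{enum:lm:diff_to_canon_struct:a_fix_on_hU} and~\ref{enum:lm:diff_to_canon_struct:a_WhV__h_WhV} as printed you correctly get $\hadif=\id$ on $\Wdomain\cap\hVdomain$, and you then pass to $\Wdomain\cap\hUdomain$ by asserting that neighborhoods of $\Aman$ and of $\Bman$ meet the same end of $\Wdomain$, so that after shrinking $\hUdomain$ one has $\Wdomain\cap\hUdomain\subset\Wdomain\cap\hVdomain$. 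That assertion is false in the stated generality: the lemma concerns an arbitrary $\Ck$-manifold $\Aspace=\Udomain\cup\Vdomain$, with no non-Hausdorffness hypothesis, and in the paper's own application of this lemma (Remark~\ref{rem:joinable_charts_on_R}, where $\Aspace=(0;3)$, $\Udomain=(0;2)$, $\Vdomain=(1;3)$, $\Wdomain=(1;2)$) one has $\Aman=(0;1]$ and $\Bman=[2;3)$, so $\Wdomain\cap\hUdomain$ accumulates at $1$ while $\Wdomain\cap\hVdomain$ accumulates at $2$; the two sets can even be chosen disjoint, and no shrinking of $\hUdomain$ ever produces your inclusion. So the argument collapses exactly at the step you yourself flagged as the main obstacle.

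The root of the trouble is a typo in the statement, which the paper's proof silently corrects: condition~\ref{enum:lm:diff_to_canon_struct:a_WhV__h_WhV} is meant to read $\restr{\bdif}{\Wdomain\cap\hUdomain}=\restr{\Atrmap}{\Wdomain\cap\hUdomain}$ (the paper's proof says verbatim that by~\ref{enum:lm:diff_to_canon_struct:a_WhV__h_WhV} the map $\hadif$ is fixed on $\Wdomain\cap\hUdomain$, which only makes sense with $\hUdomain$ there). Your own computation shows why the literal reading cannot be intended: combined with~\ref{enum:lm:diff_to_canon_struct:a_fix_on_hU} it merely forces $\Atrmap=\id$ near the $\hVdomain$-end of $\Wdomain$ and says nothing at the $\hUdomain$-end, which is what the extension by the identity needs. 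In fact the literal statement is false: in the setting of Remark~\ref{rem:joinable_charts_on_R} take $\bdif=\id_{\Vdomain}$ and a transition map with $\Atrmap(\px)=1+(\px-1)^{2}$ near $\px=1$ and $\Atrmap=\id$ near $\px=2$; then \ref{enum:lm:diff_to_canon_struct:aW_W}--\ref{enum:lm:diff_to_canon_struct:a_WhV__h_WhV} hold, yet \eqref{equ:gA_b_idW_ainv} would force $\adif^{-1}$ to be a $\Cr{1}$-diffeomorphism of $(0;2)$ fixing $1$ with vanishing derivative there, a contradiction. With the corrected reading of~\ref{enum:lm:diff_to_canon_struct:a_WhV__h_WhV}, your very first computation ($\hbdif=\Atrmap$, hence $\hadif=\id$, on $\Wdomain\cap\hUdomain$) closes the argument at once, with no shrinking and no appeal to branch points, and the rest of your proof (the gluing of $\adif$ and Theorem~\ref{th:CkLUV_HLUV}) is exactly the paper's. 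One further inaccuracy, inherited from the statement, is your closing claim that \eqref{equ:gA_b_idW_ainv} is ``exactly'' the compatibility making $\dif$ well defined: agreement of $\adif\circ\Ucmap$ and $\bdif\circ\Vcmap$ on $\Wdomain$ requires $\erestr{\adif}{\Wdomain}=\erestr{\bdif}{\Wdomain}\circ\Atrmap$, i.e.\ $\Atrmap=\hbdif^{-1}\circ\hadif$, which differs from \eqref{equ:gA_b_idW_ainv}; with \eqref{equ:gA_b_idW_ainv} the map that is actually well defined is $\adif^{-1}\circ\Ucmap$ on $\Udomain$ glued with $\bdif^{-1}\circ\Vcmap$ on $\Vdomain$.
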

\begin{figure}[htbp!]
\includegraphics[height=2.5cm]{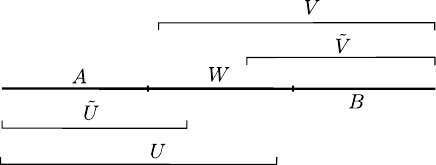}
\caption{}\label{fig:uniq_str}
\end{figure}
\begin{proof}
Put $\hadif = \Atrmap^{-1}\circ\erestr{\bdif}{\Wdomain}$.
Then by~\ref{enum:lm:diff_to_canon_struct:a_WhV__h_WhV}, $\hadif$ is fixed on $\Wdomain\cap\hUdomain$, whence it extends by the identity to a $\Ck$-diffeomorphism $\adif$ of all of $\Udomain$ such that $\adif(\Wdomain)=\Wdomain$.
It then follows that $\Atrmap = \hbdif\circ\erestr{\adif^{-1}}{\Wdomain} = \erestr{\bdif}{\Wdomain}\circ\id_{\Wdomain}\circ\erestr{\adif^{-1}}{\Wdomain}$, so~\eqref{equ:gA_b_idW_ainv} holds.
The corresponding properties of $\dif$ are straightforward and agree with~\eqref{equ:h_Y_Y__reconstr_via_a_b}.
\end{proof}

\begin{subremark}\label{rem:joinable_charts_on_R}\rm
Let $\Aspace = (0;3)$ be the open interval in $\bR$ with the canonical $\Ck$-structure $\Latlas = \CanonAtlas{\Aspace}$, $\Udomain = (0;2)$, $\Vdomain = (1;3)$, and $\Wdomain = \Udomain\cap\Vdomain=(1;2)$.
Suppose two homeomorphisms onto $\Ucmap\colon\Udomain\to(0;2)$ and $\Vcmap\colon\Vdomain\to(1;3)$ constitute a $\UVAtlas$-atlas $\Aatlas$ such that $\Ucmap(\Wdomain)=\Vcmap(\Wdomain)=(1;2) = \Wdomain$.
Then by the standard gluing technique, see e.g.\ \cite[Lemma~4.2.1]{LysynskyiMaksymenko:SmoothStr:2024}, they satisfy assumptions of Lemma~\ref{lm:diff_to_canon_struct}.
Hence there is a $\Ck$-diffeomorphism $\dif\colon\mpair{\Aspace}{\Aatlas} \to \mpair{\Aspace}{\Latlas}$.
Moreover it coincides with $\Ucmap$ on $(0;1+\eps)$ and with $\Vcmap$ on $(2-\eps;3)$ for some small $\eps>0$.

Note that this diffeomorphism can be regarded as a single chart on $\Aspace$ being $\Ck$-compatible with $\mpair{\Aspace}{\Aatlas}$, and thus one can replace those charts with this one.
This observation is a key technical step in the proof of Theorem~\ref{th:uniq_ck_struct_on_R}, see e.g.\ \cite[Lemma~4.2.1]{LysynskyiMaksymenko:SmoothStr:2024}.
\end{subremark}

\section{Categorical point of view}
\label{sect:categorical_view}
In this section we extend previous results to general categories.
In what follows $\Ccateg$ is a subcategory of some category $\Hcateg$.
For simplicity we will restrict ourselves only to considering isomorphisms and automorphisms.

One can think of $\Hcateg$ to be the category of topological spaces and their homeomorphisms, while $\Ccateg$ is a subcategory of $\Ck$-manifolds and their $\Ck$-diffeomorphisms.

\subsection{Isomorphisms between objects and morphisms}
For a pair of objects $\Udomain$ and $\Vdomain$ in $\Hcateg$ denote by $\MorH{\Udomain}{\Vdomain}$ the subcategory of morphisms $\Udomain\to\Vdomain$ between them in $\Hcateg$, and by $\IsomH{\Udomain}{\Vdomain}$ the subcategory of \term{isomorphisms} (invertible morphisms) $\Udomain\to\Vdomain$.

If $\Udomain=\Vdomain$, then $\IsomH{\Udomain}{\Udomain}$ is also denoted by $\AutHU$, and called the \term{group of automorphisms of $\Udomain$}.
It has a single object $\Udomain$ whose morphisms consist of all invertible morphisms $\Udomain\to\Udomain$.

We also have a \term{morphism category of $\Hcateg$} denoted by $\ArrHcateg$ whose \term{objects} are morphisms from $\Hcateg$, and \term{morphisms} between pairs of morphisms $\wuinc\colon\Wdomain\to\Udomain$ and $\wuinc'\colon\Wdomain'\to\Udomain'$ are commutative diagrams of the form:
\[
    \xymatrix{
        \Wman \ar[r]^-{\wuinc} \ar[d]_-{\wdif}  & \Uman \ar[d]^-{\udif} \\
        \Wman' \ar[r]^-{\wuinc'}                & \Uman'
    }
\]
If in this diagram $\wdif$ and $\udif$ are isomorphisms, then $(\wdif,\udif)$ is an \term{isomorphism} between $\wuinc$ and $\wuinc'$.

We will be interested in the subcategories $\IsomCat{\ArrHcateg}{\wuinc}{\wuinc'}$ and $\AutCObject[\ArrHcateg]{\wuinc}$ of isomorphisms between morphisms $\wuinc$ and $\wuinc'$, and self-isomorphisms of $\wuinc$ respectively.

\subsection{$\Cspan$-Spans and their $\HCatlas$-atlases}
By a \term{$\Cspan$-span} we will mean the following diagram in the category $\Ccateg$:
\begin{equation}\label{equ:c_span}
    \Aspace\colon\spanCat{\Udomain}{\wuinc}{\Wdomain}{\wvinc}{\Vdomain},
\end{equation}
i.e.\ the objects $\Wdomain,\Udomain,\Vdomain$ and the arrows belong to $\Ccateg$.

If we want to distinguish the order of arrows, i.e.\ regard $\wuinc$ as the first arrow of $\Aspace$, while $\wvinc$ is the second one, then $\Aspace$ will be called an \term{ordered $\Cspan$-span}.
In this case we define the \term{reversed $\Cspan$-span} to $\Aspace$ by
\begin{equation}\label{equ:c_span_rev}
    \spanRev{\Aspace} \colon \spanCat{\Vdomain}{\wvinc}{\Wdomain}{\wuinc}{\Udomain}.
\end{equation}
An \term{$\Hcateg$-morphism} between $\Ccateg$-spans
\begin{align*}
    &\Aspace\colon\spanCat{\Vdomain}{\wvinc}{\Wdomain}{\wuinc}{\Udomain}, &
    &\Bspace\colon\spanCat{\hVdomain}{\hwvinc}{\hWdomain}{\hwuinc}{\hUdomain}
\end{align*}
is the following commutative diagram:
\begin{equation}\label{equ:span_morphism}
    \xymatrix{
        \Udomain \ar[d]_-{\udif} &
        \Wdomain  \ar[l]_-{\wuinc}  \ar[r]^-{\wvinc}  \ar[d]^-{\wdif} &
        \Vdomain \ar[d]^-{\vdif}  \\
        \hUdomain             &
        \hWdomain \ar[l]_-{\hwuinc} \ar[r]^-{\hwvinc}     &
        \hVdomain
    }
\end{equation}
in which $\udif$, $\wdif$, and $\vdif$ are \term{morphisms in $\Hcateg$}.
Note that $\Cspan$-spans and their morphisms constitute a category.
\begin{subdefinition}\label{def:HC_atlas}
An \term{$\HCatlas$-atlas} is a pair $(\Aspace,\Aatlas)$, where $\Aspace \colon \spanCat{\Udomain}{\wuinc}{\Wdomain}{\wvinc}{\Vdomain}$ is a $\Cspan$-span, and
\[
    \Aatlas = \{
        (\Ucmap,\hUcmap)\in\AutCObject[\ArrHcateg]{\wuinc}, \
        (\Vcmap,\hVcmap)\in\AutCObject[\ArrHcateg]{\wvinc}
    \}
\]
is a pair of automorphisms (called \term{charts}) of arrows of $\Aspace$ such that the following isomorphism (called the \term{transition map})
\[
    \Atrmap:=\hVcmap\circ\hUcmap^{-1}\colon\Wdomain\to\Wdomain
\]
is an \term{isomorphism in $\Ccateg$}, i.e.\ belongs to $\AutCW$.

We will also denote $(\Aspace, \Aatlas)$ by $\mpair{\Aspace}{\Aatlas}$ and say that \term{$\Aatlas$ is an $\HCatlas$-atlas on $\Aspace$}.
\end{subdefinition}
Thus, an $\HCatlas$-atlas is a commutative diagram in $\Hcateg$
\[
\xymatrix@C=5em@R=1.5em{
    \Udomain
    &
    \Wdomain
        \ar@{_(->}[l]_-{\wuinc}
        \ar[r]^-{\Atrmap \,=\, \hVcmap \,\circ\, \hUcmap^{-1}}
    &
    \Wdomain
        \ar@{^(->}[r]^-{\wvinc}
    &
    \Vdomain
    \\
    %%%%%%
    \Udomain
        \ar[u]_-{\Ucmap}
    &
    \Wdomain
        \ar@{_(->}[l]_-{\wuinc}
        \ar@{=}[r]^-{\id_{\Wdomain}}
        \ar[u]^-{\hUcmap}
    &
    \Wdomain
        \ar@{^(->}[r]^-{\wvinc}
        \ar[u]_-{\hVcmap}
    &
    \Vdomain
        \ar[u]^-{\Vcmap}
    }
\]
in which vertical arrows are isomorphisms from $\Hcateg$ while horizontal arrows are in $\Ccateg$.

Let
\begin{align*}
\mpair{\Aspace}{\Aatlas}&=
\bigl(
    \Aspace\colon\spanCat{\Udomain}{\wuinc}{\Wdomain}{\wvinc}{\Vdomain}, \
    \Aatlas=\{ (\aUcmap,\haUcmap),(\aVcmap,\haVcmap)\}
\bigr),
\\
\mpair{\Bspace}{\Batlas}&=
\bigl(
    \Bspace\colon\spanCat{\hUdomain}{\hwuinc}{\hWdomain}{\hwuinc}{\hVdomain}, \
    \Batlas=\{(\bUcmap,\hbUcmap),(\bVcmap,\hbVcmap)\}
\bigr)
\end{align*}
be two $\HCatlas$-atlases and $\dif=(\udif,\wdif,\vdif)\colon\Aspace\to\Bspace$ be a morphism between the corresponding $\Cspan$-spans.
Then in the following commutative diagram similar to~\eqref{equ:h_UU_VV__Y} and~\eqref{equ:h_UU_VV__L_general}:
\begin{equation}\label{equ:h_UU_VV__L_categories}
\begin{gathered}
\xymatrix@C=5em@R=1.5em{
    \Udomain
        \ar@{-->}@/_2.5ex/[ddd]_-{\adif}
    &
    \Wdomain
        \ar@{_(->}[l]_-{\wuinc}
        \ar[r]^-{\Atrmap = \haVcmap \circ \haUcmap^{-1}}
        \ar@{-->}@/^2.5ex/[ddd]^-{\hadif}
    &
    \Wdomain
        \ar@{^(->}[r]^-{\wvinc}
        \ar@{-->}@/_2.5ex/[ddd]_-{\hbdif}
    &
    \Vdomain
        \ar@{-->}@/^2.5ex/[ddd]^-{\bdif}
    \\
    %%%%%%
    \Udomain
        \ar[u]_-{\aUcmap}
        \ar[d]^-{\udif}
    &
    \Wdomain
        \ar@{_(->}[l]_-{\wuinc}
        \ar@{=}[r]
        \ar[u]^-{\haUcmap}
        \ar[d]_-{\wdif}
    &
    \Wdomain
        \ar@{^(->}[r]^-{\wvinc}
        \ar[u]_-{\haVcmap}
        \ar[d]^-{\wdif}
    &
    \Vdomain
        \ar[u]^-{\aVcmap}
        \ar[d]_-{\vdif}
    \\
    %%%%%%%%%
    \hUdomain
        \ar[d]^-{\bUcmap}
    &
    \hWdomain
        \ar@{_(->}[l]_-{\hwuinc}
        \ar@{=}[r]
        \ar[d]_-{\hbUcmap}
    &
    \hWdomain
        \ar@{^(->}[r]^-{\hwvinc}
        \ar[d]^-{\hbVcmap}
    &
    \hVdomain
        \ar[d]_-{\bVcmap}
    \\
    %%%%%%%%%%
    \hUdomain
    &
    \hWdomain
        \ar@{_(->}[l]_-{\hwuinc}
        \ar[r]^-{\Btrmap = \hbVcmap\circ\hbUcmap^{-1}}
    &
    \hWdomain
        \ar@{^(->}[r]^-{\hwvinc}
    &
    \hVdomain
}
\end{gathered}
\end{equation}
the dashed arrows
\begin{align*}
    & \bigl(
        \adif=\bUcmap\circ\udif\circ\aUcmap^{-1}, \
        \hadif=\hbUcmap\circ\wdif\circ\haUcmap^{-1}
    \bigr)
    \in
    \MorCat{\ArrHcateg}{\wuinc}{\hwuinc}, \\
    & \bigl(
        \bdif=\bVcmap\circ\vdif\circ\aVcmap^{-1}, \
        \hbdif=\hbVcmap\circ\wdif\circ\haVcmap^{-1}
    \bigr)
    \in
    \MorCat{\ArrHcateg}{\wvinc}{\hwvinc}
\end{align*}
are called \term{representation of $\dif$} (with respect to the atlases $\Aatlas$ and $\Batlas$).

We will say that $\dif$ is a \term{morphism between $\HCatlas$-atlases} $\mpair{\Aspace}{\Aatlas}\to\mpair{\Bspace}{\Batlas}$ if its representations are in fact morphisms in $\ArrCcateg$, i.e.\ $(\adif, \hadif)\in \MorCat{\ArrCcateg}{\wuinc}{\hwuinc}$ and $(\bdif, \hbdif)\in \MorCat{\ArrCcateg}{\wvinc}{\hwvinc}$.

The following lemma is easy and follows from Diagram~\eqref{equ:h_UU_VV__L_categories}.
\begin{sublemma}
Let $\bigl(
    \Aspace\colon\spanCat{\Udomain}{\wuinc}{\Wdomain}{\wvinc}{\Vdomain}, \
    \Aatlas=\{(\aUcmap,\haUcmap),(\aVcmap,\haVcmap)\}
\bigr)$ be an $\HCatlas$-atlas and $\dif=(\udif,\wdif,\vdif)$, see~\eqref{equ:span_morphism}, be an \term{isomorphism} to some other $\Cspan$-span $\Aspace\colon\spanCat{\hUdomain}{\hwuinc}{\hWdomain}{\hwuinc}{\hVdomain}$.
Put
\begin{align*}
\bUcmap  &= \adif\circ\aUcmap\circ\udif^{-1}   \in \AutH{\Udomain}, &
\hbUcmap &= \hadif\circ\haUcmap\circ\wdif^{-1} \in \AutH{\Wdomain}, \\
\bVcmap  &= \bdif\circ\aVcmap\circ\vdif^{-1}   \in \AutH{\hUdomain},&
\hbVcmap &= \hbdif\circ\hbUcmap\circ\wdif^{-1} \in \AutH{\hWdomain}.
\end{align*}
Then $\indatl{\dif}{\Aatlas}=\{(\bUcmap,\hbUcmap),(\bVcmap,\hbVcmap)\}$ is an $\HCatlas$-atlas on $\Bspace$ and $\dif$ is an isomorphism of atlases $\mpair{\Aspace}{\Aatlas}\to\mpair{\Bspace}{\indatl{\dif}{\Aatlas}}$.
Moreover, the transition map of $\indatl{\dif}{\Aatlas}$ is $\hbVcmap\circ\hbUcmap^{-1} = \hbdif\circ(\haVcmap\circ\haUcmap^{-1})\circ\hadif^{-1}$.
\end{sublemma}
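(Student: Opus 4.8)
The statement is the categorical analogue of the induced--atlas construction, and its proof is a diagram chase through~\eqref{equ:h_UU_VV__L_categories}. The only data not named in the hypotheses are the ``representations'' $\adif,\hadif,\bdif,\hbdif$, which I read as prescribed components of isomorphisms of arrows, $(\adif,\hadif)\in\IsomCat{\ArrCcateg}{\wuinc}{\hwuinc}$ and $(\bdif,\hbdif)\in\IsomCat{\ArrCcateg}{\wvinc}{\hwvinc}$; the target atlas is then defined so that $\dif$ acquires exactly these representations. Three compatibility relations drive everything: that $\dif=(\udif,\wdif,\vdif)$ is a span morphism, so $\udif\circ\wuinc=\hwuinc\circ\wdif$ and $\vdif\circ\wvinc=\hwvinc\circ\wdif$; that the source charts are automorphisms of their arrows, so $\wuinc\circ\haUcmap=\aUcmap\circ\wuinc$ and $\wvinc\circ\haVcmap=\aVcmap\circ\wvinc$; and that the representations are arrow--morphisms, so $\adif\circ\wuinc=\hwuinc\circ\hadif$ and $\bdif\circ\wvinc=\hwvinc\circ\hbdif$.

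First I would check that $\indatl{\dif}{\Aatlas}$ really consists of charts, i.e.\ that $(\bUcmap,\hbUcmap)\in\AutCObject[\ArrHcateg]{\hwuinc}$ and $(\bVcmap,\hbVcmap)\in\AutCObject[\ArrHcateg]{\hwvinc}$. Each of $\bUcmap,\hbUcmap,\bVcmap,\hbVcmap$ is a composite of $\Hcateg$--isomorphisms and hence an automorphism of the appropriate object; it remains to verify the commuting square. Substituting the definitions and applying the three relations in turn gives
\[
\bUcmap\circ\hwuinc
= \adif\circ\aUcmap\circ\udif^{-1}\circ\hwuinc
= \adif\circ\aUcmap\circ\wuinc\circ\wdif^{-1}
= \adif\circ\wuinc\circ\haUcmap\circ\wdif^{-1}
= \hwuinc\circ\hadif\circ\haUcmap\circ\wdif^{-1}
= \hwuinc\circ\hbUcmap,
\]
so $(\bUcmap,\hbUcmap)$ is an automorphism of the arrow $\hwuinc$; the $\Vdomain$--side is identical with $\wvinc,\aVcmap,\haVcmap,\bdif,\hbdif,\vdif$ replacing their $\wuinc$--counterparts.

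Next I would compute the transition map. Cancelling $\wdif^{-1}\circ\wdif=\id_{\hWdomain}$ yields
\[
\hbVcmap\circ\hbUcmap^{-1}
= (\hbdif\circ\haVcmap\circ\wdif^{-1})\circ(\hadif\circ\haUcmap\circ\wdif^{-1})^{-1}
= \hbdif\circ(\haVcmap\circ\haUcmap^{-1})\circ\hadif^{-1}
= \hbdif\circ\Atrmap\circ\hadif^{-1},
\]
which is the asserted formula, $\Atrmap=\haVcmap\circ\haUcmap^{-1}$ being the transition map of $\Aatlas$. Since $\Atrmap$ is an isomorphism in $\Ccateg$ by hypothesis and $\hadif,\hbdif$ are too, this composite lies in $\AutC{\hWdomain}$, so $\indatl{\dif}{\Aatlas}$ meets the last requirement of Definition~\ref{def:HC_atlas} and is a genuine $\HCatlas$--atlas on $\Bspace$. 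Finally, reading the representation of $\dif$ off~\eqref{equ:h_UU_VV__L_categories} for the pair $\Aatlas,\indatl{\dif}{\Aatlas}$ gives $\bUcmap\circ\udif\circ\aUcmap^{-1}=\adif$, $\hbUcmap\circ\wdif\circ\haUcmap^{-1}=\hadif$, and analogously $\bdif,\hbdif$ on the $\Vdomain$--side; these lie in $\ArrCcateg$ by assumption, and since $\udif,\wdif,\vdif$ (hence $\adif,\hadif,\bdif,\hbdif$) are invertible, $\dif^{-1}$ has representations $(\adif^{-1},\hadif^{-1})$, $(\bdif^{-1},\hbdif^{-1})$, again in $\ArrCcateg$. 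Thus $\dif$ is an isomorphism of $\HCatlas$--atlases.

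There is no serious obstacle: the whole argument is bookkeeping of compositions in~\eqref{equ:h_UU_VV__L_categories}, and the only real care needed is tracking the direction of each arrow---which maps act on $\Udomain,\Wdomain,\Vdomain$ versus $\hUdomain,\hWdomain,\hVdomain$---and invoking the correct one of the three compatibility squares at each cancellation. While writing this up I would correct the statement's evident typos: the defining formula should read $\hbVcmap=\hbdif\circ\haVcmap\circ\wdif^{-1}$ (the $\hbUcmap$ there stands for $\haVcmap$), and the four memberships are into $\AutH{\hUdomain}$, $\AutH{\hWdomain}$, $\AutH{\hVdomain}$, $\AutH{\hWdomain}$ respectively; with these corrections the displays above close exactly.
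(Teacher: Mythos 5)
Your proof is correct and is exactly the argument the paper intends: the paper states this lemma without proof, remarking only that it is easy and follows from Diagram~\eqref{equ:h_UU_VV__L_categories}, and your verification is precisely that diagram chase, including the (necessary) reading of $(\adif,\hadif)$ and $(\bdif,\hbdif)$ as chosen isomorphisms in $\IsomCat{\ArrCcateg}{\wuinc}{\hwuinc}$ and $\IsomCat{\ArrCcateg}{\wvinc}{\hwvinc}$, which is the only interpretation under which the conclusion holds. Your corrections of the statement's typos (the formula for $\hbVcmap$, the four memberships, and the mislabeled target span) are also right.
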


\newcommand\pUW{p_{\Udomain}}
\newcommand\pVW{p_{\Vdomain}}

\subsection{Isomorphism classes of $\HCatlas$-atlases}
Let $\Aspace\colon\spanCat{\Udomain}{\wuinc}{\Wdomain}{\wvinc}{\Vdomain}$ be a $\Cspan$-span.
Recall that, by definition, each of the categories $\AutCObject[\ArrCcateg]{\wuinc}$, $\AutCObject[\ArrCcateg]{\wvinc}$, and $\AutCW$ consists of a single object.
Then we have the following two functors:
\begin{align*}
    \pUW&\colon\AutCObject[\ArrCcateg]{\wuinc} \to \AutCW, & \pUW(\adif,\hadif)&=\hadif, \\
    \pVW&\colon\AutCObject[\ArrCcateg]{\wvinc} \to \AutCW, & \pUW(\bdif,\hbdif)&=\hbdif,
\end{align*}
being the projections onto the second coordinates.
They can also be regarded as homomorphisms of the respective groups.

Denote by
\[
   \AutWExtU := \pUW(\AutCObject[\ArrCcateg]{\wuinc}),
   \qquad
   \AutWExtV := \pVW(\AutCObject[\ArrCcateg]{\wuinc})
\]
their ``\term{images}''.
More precisely, $\AutWExtU$ is a subcategory of $\AutCW$ with a single object $\Wdomain$, and whose morphisms are automorphisms $\hadif\in\AutCW$ for which there exists an automorphism $\adif\in\AutCU$ such that $(\adif,\hadif)\in\AutCObject[\ArrCcateg]{\wuinc}$, that is $\wuinc\circ\adif=\hadif\circ\wuinc$.
A similar description holds for $\AutWExtV$.

Then the product $\AutWExtU\times\AutWExtV$ acts on morphisms of $\AutCW$ (i.e.\ of $\Wdomain$) by the following rule:
\[
    (\hadif,\hbdif)\cdot\gdif := \hbdif\circ\gdif\circ\hadif^{-1}
\]
for $\hadif\in\AutWExtU$, $\hbdif\in\AutWExtV$, and $\gdif\in\AutCW$.
The corresponding orbits set will be denoted by
\[
    \dbl{\AutWExtV}{\AutCW}{\AutWExtU}
\]
and called \term{$(\AutWExtU,\AutWExtV)$-double cosets}.

The proof of the following theorem almost literally repeats arguments of Theorem~\ref{th:CkLUV_HLUV} and we leave it for the reader.

\begin{subtheorem}\label{th:CkLUV_HLUV:categories}
Let $\Ccateg$ be a subcategory of a category $\Hcateg$.
Then for every ordered $\Ccateg$-span
\[ \Aspace\colon\spanCat{\Udomain}{\wuinc}{\Wdomain}{\wvinc}{\Vdomain}\]
the correspondence
\begin{equation*}
\Aatlas=\{(\aUcmap,\haUcmap), (\aVcmap,\haVcmap)\}
\ \longmapsto \
\Atrmap = \haVcmap\circ\haUcmap^{-1}
\end{equation*}
associating to each $\HCatlas$-atlas $\Aatlas$ on $\Aspace$ its transition map $\Atrmap$ yields an \term{injective} map $\mu'$
\begin{itemize}
\item
from the isomorphism classes of $\HCatlas$-atlases on $\Aspace$
\item
into the double cosets $\dbl{\AutWExtV}{\AutCW}{\AutWExtU}$.
\end{itemize}
Moreover, $\mu'$ is a \term{bijection} if and only if every $\gdif\in\AutCW$ is a transition map for some $\HCatlas$-atlas $\Aatlas$.
\end{subtheorem}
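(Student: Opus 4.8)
The plan is to transcribe the proof of Theorem~\ref{th:CkLUV_HLUV} into categorical language, with the commutative diagram~\eqref{equ:h_UU_VV__L_categories} now playing the role of~\eqref{equ:h_UU_VV__L_general}. Fix two $\HCatlas$-atlases $\Aatlas=\{(\aUcmap,\haUcmap),(\aVcmap,\haVcmap)\}$ and $\Batlas=\{(\bUcmap,\hbUcmap),(\bVcmap,\hbVcmap)\}$ on the span $\Aspace$, with transition maps $\Atrmap=\haVcmap\circ\haUcmap^{-1}$ and $\Btrmap=\hbVcmap\circ\hbUcmap^{-1}$. The core of the argument is the equivalence of the following two conditions: (a) there is an isomorphism of $\HCatlas$-atlases $\dif=(\udif,\wdif,\vdif)\colon\mpair{\Aspace}{\Aatlas}\to\mpair{\Aspace}{\Batlas}$; and (b) the transition maps $\Atrmap$ and $\Btrmap$ lie in the same $(\AutWExtU,\AutWExtV)$-double coset. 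Granting this equivalence, well-definedness of $\mu'$ on isomorphism classes follows from (a)$\Rightarrow$(b), and injectivity of $\mu'$ from (b)$\Rightarrow$(a).

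For the forward implication I would read off the diagram~\eqref{equ:h_UU_VV__L_categories}. Since $\dif$ is an isomorphism of atlases, its representations $(\adif,\hadif)$ and $(\bdif,\hbdif)$ are isomorphisms in $\ArrCcateg$, hence lie in $\AutCObject[\ArrCcateg]{\wuinc}$ and $\AutCObject[\ArrCcateg]{\wvinc}$ respectively; by the definition of the ``images'' this means $\hadif\in\AutWExtU$ and $\hbdif\in\AutWExtV$. Commutativity of the middle vertical rectangle of~\eqref{equ:h_UU_VV__L_categories} then yields the identity $\Btrmap=\hbdif\circ\Atrmap\circ\hadif^{-1}$, which is precisely the assertion that $\Atrmap$ and $\Btrmap$ share a double coset.

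For the converse I would reconstruct $\dif$ from the relation $\Btrmap=\hbdif\circ\Atrmap\circ\hadif^{-1}$ with $\hadif\in\AutWExtU$, $\hbdif\in\AutWExtV$. Choosing lifts $\adif\in\AutCU$ with $(\adif,\hadif)\in\AutCObject[\ArrCcateg]{\wuinc}$ and $\bdif\in\AutCV$ with $(\bdif,\hbdif)\in\AutCObject[\ArrCcateg]{\wvinc}$ (which exist by the very definition of $\AutWExtU$ and $\AutWExtV$), I would set
\[
    \udif=\bUcmap^{-1}\circ\adif\circ\aUcmap, \qquad
    \wdif=\hbUcmap^{-1}\circ\hadif\circ\haUcmap, \qquad
    \vdif=\bVcmap^{-1}\circ\bdif\circ\aVcmap.
\]
The step I expect to be the main obstacle is confirming that $\wdif$ is unambiguous and that $(\udif,\wdif,\vdif)$ is genuinely a span morphism. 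Computing $\wdif$ from the $\wvinc$-side instead gives $\hbVcmap^{-1}\circ\hbdif\circ\haVcmap$, and the two expressions coincide exactly when $\Btrmap\circ\hadif=\hbdif\circ\Atrmap$, i.e.\ precisely under the double coset relation of (b); this is the categorical shadow of the consistency tacitly verified in Lemma~\ref{lm:diffs_of_Y}. Commutativity of the two squares of the span morphism~\eqref{equ:span_morphism} is then a formal chase: since each chart pair lies in $\AutCObject[\ArrHcateg]{\wuinc}$ (so $\wuinc\circ\haUcmap=\aUcmap\circ\wuinc$, and likewise for the remaining charts) and $(\adif,\hadif)$ is an arrow-category morphism, one obtains $\wuinc\circ\wdif=\udif\circ\wuinc$ in three rewrites, and symmetrically $\wvinc\circ\wdif=\vdif\circ\wvinc$. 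As all charts and all lifts are invertible, $\dif$ is an isomorphism of $\HCatlas$-atlases realizing (a).

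Finally, the bijectivity clause is tautological: $\mu'$ surjects onto $\dbl{\AutWExtV}{\AutCW}{\AutWExtU}$ exactly when every $\gdif\in\AutCW$ is the transition map of some $\HCatlas$-atlas, which is the stated condition. Beyond the consistency of $\wdif$ flagged above, every verification is a formal consequence of the arrow-category structure and introduces no idea not already present in Theorem~\ref{th:CkLUV_HLUV}.
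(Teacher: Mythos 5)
Your proposal is correct and follows exactly the route the paper intends: the paper omits the proof, stating that it ``almost literally repeats'' that of Theorem~\ref{th:CkLUV_HLUV}, and your argument is precisely that transcription, with the equivalence (a)$\Leftrightarrow$(b) read off diagram~\eqref{equ:h_UU_VV__L_categories} and the converse built by the categorical analogue of~\eqref{equ:h_Y_Y__reconstr_via_a_b}. You also correctly isolate the one point needing care in the categorical setting --- that the two candidate definitions of $\wdif$ (from the $\wuinc$-side and the $\wvinc$-side) agree exactly under the double coset relation, since a single $\wdif$ must induce both $\hadif$ and $\hbdif$ --- which is the categorical counterpart of the well-definedness check in Lemma~\ref{lm:diffs_of_Y}.
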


\subsection{$\pm$-Isomorphism classes of $\HCatlas$-atlases}
As above, let $\Aspace\colon\spanCat{\Udomain}{\wuinc}{\Wdomain}{\wvinc}{\Vdomain}$ be an ordered $\Cspan$-span.
Suppose also that $\Udomain$ and $\Vdomain$ are isomorphic in $\Ccateg$.
Then one may expect that there is an isomorphism between $\Aspace$ and its reversed $\Cspan$-span $\spanRev{\Aspace}\colon\spanCat{\Vdomain}{\wvinc}{\Wdomain}{\wuinc}{\Udomain}$, \eqref{equ:c_span_rev}.

Suppose also that we have two $\HCatlas$-atlases
\begin{align*}
    \Aatlas &= \{(\aUcmap,\haUcmap),(\aVcmap,\haVcmap)\}, &
    \Batlas &= \{(\bUcmap,\hbUcmap),(\bVcmap,\hbVcmap)\}
\end{align*}
on $\Aspace$, and $\dif=(\udif,\wdif,\vdif)\colon\mpair{\Aspace}{\Aatlas}\to\mpair{(\spanRev{\Aspace})}{\Batlas}$ be an isomorphism, so ``\term{$\dif$ exchanges $\Udomain$ and $\Vdomain$}''.
Then we have the following commutative diagram similar to~\eqref{equ:h_UV_VU__Y} and~\eqref{equ:h_UV_VU__L_general}:
\begin{equation}\label{equ:h_UV_VU__L_categories}
\begin{gathered}
\xymatrix@C=5em@R=1.5em{
    \Udomain
        \ar@{-->}@/_2.5ex/[ddd]_-{\adif}
    &
    \Wdomain
        \ar@{_(->}[l]_-{\wuinc}
        \ar[r]^-{\Atrmap = \haVcmap \circ \haUcmap^{-1}}
        \ar@{-->}@/^2.5ex/[ddd]^-{\hadif}
    &
    \Wdomain
        \ar@{^(->}[r]^-{\wvinc}
        \ar@{-->}@/_2.5ex/[ddd]_-{\hbdif}
    &
    \Vdomain
        \ar@{-->}@/^2.5ex/[ddd]^-{\bdif}
    \\
    %%%%%%
    \Udomain
        \ar[u]_-{\aUcmap}
        \ar[d]^-{\udif}
    &
    \Wdomain
        \ar@{_(->}[l]_-{\wuinc}
        \ar@{=}[r]
        \ar[u]^-{\haUcmap}
        \ar[d]_-{\wdif}
    &
    \Wdomain
        \ar@{^(->}[r]^-{\wvinc}
        \ar[u]_-{\haVcmap}
        \ar[d]^-{\wdif}
    &
    \Vdomain
        \ar[u]^-{\aVcmap}
        \ar[d]_-{\vdif}
    \\
    %%%%%%%%%
    \Vdomain
        \ar[d]^-{\bVcmap}
    &
    \Wdomain
        \ar@{_(->}[l]_-{\wvinc}
        \ar@{=}[r]
        \ar[d]_-{\bVcmap}
    &
    \Wdomain
        \ar@{^(->}[r]^-{\wuinc}
        \ar[d]^-{\bUcmap}
    &
    \Udomain
        \ar[d]_-{\bUcmap}
    \\
    %%%%%%%%%%
    \Vdomain
    &
    \Wdomain
        \ar@{_(->}[l]_-{\wvinc}
    &
    \Wdomain
        \ar@{^(->}[r]^-{\wuinc}
        \ar[l]_-{\Btrmap = \bVcmap\circ\bUcmap^{-1}}
    &
    \Udomain
}
\end{gathered}
\end{equation}

Isomorphisms between $\mpair{\Aspace}{\Aatlas}\to\mpair{\Aspace}{\Batlas}$ and $\mpair{\Aspace}{\Aatlas}\to\mpair{(\spanRev{\Aspace})}{\Batlas}$ will be called \term{$\pm$-isomorphisms}.

Our aim is to describe the $\pm$-isomorphism classes of atlases on $\Aspace$.

Regard the group $\bZ_{2}=\{\pm1\}$ as a category with a single object and two morphisms.
Denote by $\AutWExtU\wr\bZ_{2}$ the \term{wreath product} of $\AutWExtU$ and $\bZ_{2}$.
By definition, this is a still category with one object and in which the multiplication of morphisms is given by
\begin{align*}
    &(\cdif,\ddif,\delta)(\adif,\bdif,+1)=(\cdif\adif,\ddif\bdif,\delta),&
    &(\cdif,\ddif,\delta)(\adif,\bdif,-1)=(\ddif\adif,\cdif\bdif,\delta),
\end{align*}
for morphisms $\adif,\bdif,\cdif,\ddif\in\AutWExtU$ and $\delta=\pm1$.

The following theorem can be proved similarly to Theorem~\ref{th:act_DUWZ2_DW}.
\begin{subtheorem}\label{lm:act_DUWZ2_DW_categories}
Let $\Ccateg$ be a subcategory of a category $\Hcateg$ and
$\Aspace\colon\spanCat{\Udomain}{\wuinc}{\Wdomain}{\wvinc}{\Vdomain}$
be a $\Cspan$-span.

{\rm 1)}~For every $\eta\in\IsomC{\Udomain}{\Vdomain}$ there is an action of $\AutWExtU\wr\bZ_{2}$ on (morphisms of) $\AutCW$ given by the following rule: if $(\adif,\bdif,\delta)\in\AutWExtU\wr\bZ_{2}$ and $\gdif\in\AutCW$, then
\begin{equation*}
(\adif,\bdif,\delta)\cdot\gdif =
\begin{cases}
\uvdif (\bdif\uvdif^{-1} \gdif \adif^{-1}) =
(\uvdif\bdif\uvdif^{-1}) \gdif \adif^{-1},  & \delta = +1,
\\
\uvdif (\bdif\uvdif^{-1} \gdif \adif^{-1})^{-1}  =
(\uvdif\adif)\gdif^{-1} (\uvdif\bdif^{-1}), & \delta = -1.
\end{cases}
\end{equation*}

{\rm 2)}~The partition of $\AutWExtU$ into the orbits of this action does not depend on a particular choice of $\uvdif\in\IsomC{\Udomain}{\Vdomain}$, and will be denoted by
\[
    \dbli{\AutWExtU}{\AutCW}
\]
and called \term{ $(\AutWExtU,\pm)$-double cosets}.

{\rm 3)}~The correspondence
\begin{equation*}
\Aatlas=\{(\Udomain,\Ucmap),(\Vdomain,\Vcmap)\}
\ \longmapsto \
\Atrmap:=\Vcmap^{-1}\circ\Ucmap
\end{equation*}
associating to each $\HCatlas$-atlas $\Aatlas$ on $\Aspace$ its transition map $\Atrmap\in\AutCW$ yields an \term{injection} $\mu$
\begin{itemize}
\item
from the $\pm$-isomorphism classes of $\HCatlas$-atlases on $\Aspace$
\item
into the $\pm$-double cosets $\dbli{\AutWExtU}{\AutCW}$.
\end{itemize}
Moreover, $\mu$ is a bijection if and only if every $\gdif\in\AutCW$ is a transition map for some $\HCatlas$-atlas $\Aatlas$.
\end{subtheorem}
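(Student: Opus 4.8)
The plan is to transcribe the proof of Theorem~\ref{th:act_DUWZ2_DW} almost verbatim under the dictionary $\DiffW\rightsquigarrow\AutCW$, $\EDiffWU\rightsquigarrow\AutWExtU$, $\EDiffWV\rightsquigarrow\AutWExtV$, $\DiffUVW\rightsquigarrow\IsomC{\Udomain}{\Vdomain}$, in which composition of diffeomorphisms becomes composition in $\Ccateg$ and every suppressed restriction ``$\erestr{(-)}{\Wdomain}$'' becomes passage to the ``downstairs'' component of the relevant arrow-isomorphism. With this translation the wreath-product multiplication and the action formula in part~1 are formally identical to those of Theorem~\ref{th:act_DUWZ2_DW}. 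Hence for part~1 I would only note that $(\id_{\Wdomain},\id_{\Wdomain},+1)$ acts trivially and then check associativity: the four cases indexed by the signs of the two triples are verified by exactly the same chain of rewritings of composites in the group $\AutCW$, using only that $\AutCW$ is a group and that conjugation $\hbdif\mapsto\uvdif\circ\hbdif\circ\uvdif^{-1}$ carries $\AutWExtU$ into $\AutWExtV$ and back. No special properties of $\Hcateg$ intervene.

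For part~2, given a second isomorphism $\bar{\uvdif}\in\IsomC{\Udomain}{\Vdomain}$, I would put $\gamma:=\bar{\uvdif}^{-1}\circ\uvdif$ and rewrite the $\uvdif$-action in terms of the $\bar{\uvdif}$-action exactly as in Theorem~\ref{th:act_DUWZ2_DW}: one obtains $(\adif,\bdif,+1)\cdot\gdif=(\adif,\gamma\bdif\gamma^{-1},+1)*\gdif$ and $(\adif,\bdif,-1)\cdot\gdif=(\gamma\adif,\bdif\gamma^{-1},-1)*\gdif$. Since the assignment $(\adif,\bdif,\delta)\mapsto$ (the twisted triple) is a bijection of $\AutWExtU\wr\bZ_{2}$, the two actions share the same orbits, so $\dbli{\AutWExtU}{\AutCW}$ does not depend on $\uvdif$.

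Part~3 is the substantive step and splits into the ``preserving'' and ``exchanging'' cases. The preserving case is already the content of Theorem~\ref{th:CkLUV_HLUV:categories}, which says that an isomorphism $\mpair{\Aspace}{\Aatlas}\to\mpair{\Aspace}{\Batlas}$ exists iff $\Btrmap=\hbdif\circ\Atrmap\circ\hadif^{-1}$ for suitable representations. For the exchanging case I would record the categorical analogue of Lemma~\ref{lm:h_UV_VU}, reading it straight off diagram~\eqref{equ:h_UV_VU__L_categories}: a span-morphism $\dif=(\udif,\wdif,\vdif)\colon\mpair{\Aspace}{\Aatlas}\to\mpair{\spanRev{\Aspace}}{\Batlas}$ is an isomorphism of $\HCatlas$-atlases iff its representations $\hadif,\hbdif\in\AutCW$ satisfy $\Btrmap=(\hbdif\circ\Atrmap\circ\hadif^{-1})^{-1}$, and conversely such data reconstruct $\dif$. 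Combining the two cases, and transporting $\AutWExtV$ back to $\AutWExtU$ along $\uvdif$ as encoded in the commutative squares of diagram~\eqref{equ:motivation_for_actions}, these two relations become precisely the assertions that $\Atrmap$ and $\Btrmap$ lie in one orbit of the part~1 action with $\delta=+1$ and $\delta=-1$ respectively. The ``$\Rightarrow$'' implications then show that the transition-map assignment descends to a well-defined map $\mu$ on $\pm$-isomorphism classes, the ``$\Leftarrow$'' implications show $\mu$ is injective, and $\mu$ is onto exactly when every $\gdif\in\AutCW$ occurs as a transition map, which is the stated bijection criterion.

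The one place demanding genuine care, rather than mere copying, is the typing of the action formula. A priori $\uvdif$ is only an isomorphism $\Udomain\to\Vdomain$, so expressions such as $\uvdif\circ\bdif\circ\uvdif^{-1}$ or $\uvdif\circ\adif$ with $\adif,\bdif\in\AutCW$ are not literally composable. The resolution, implicit already in the manifold statement through the suppressed restriction to $\Wdomain$, is that $\uvdif$ enters only through the component over $\Wdomain$ of an arrow-isomorphism $\wuinc\to\wvinc$ furnished by a span-reversal; I would therefore begin by isolating this component, checking that it is a morphism of $\Wdomain$ in $\Ccateg$ and that conjugation by it interchanges $\AutWExtU$ and $\AutWExtV$, so that every composite appearing in the action and in diagram~\eqref{equ:motivation_for_actions} is a legitimate element of $\AutCW$. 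Once this typing is secured, the remainder of the argument is the formal transcription described above.
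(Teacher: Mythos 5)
Your proposal is correct and takes essentially the same approach as the paper: the paper supplies no separate argument for this theorem, saying only that it ``can be proved similarly to Theorem~\ref{th:act_DUWZ2_DW}'', and your transcription (part~1 by the same four-case associativity computation in the group $\AutCW$, part~2 via $\gamma=\bar{\uvdif}^{-1}\circ\uvdif$, part~3 by combining Theorem~\ref{th:CkLUV_HLUV:categories} with the categorical analogue of Lemma~\ref{lm:h_UV_VU} read off diagram~\eqref{equ:h_UV_VU__L_categories}) is exactly that intended argument. Your closing paragraph on typing is moreover a necessary repair, not pedantry: since in a general category there is no restriction operation, an isomorphism $\uvdif\in\IsomC{\Udomain}{\Vdomain}$ by itself cannot be composed with automorphisms of $\Wdomain$, so the action of part~1 must indeed be defined through the $\Wdomain$-component of an isomorphism $\wuinc\to\wvinc$ in $\ArrCcateg$ (the categorical surrogate of the suppressed $\erestr{(\cdot)}{\Wdomain}$), without which the stated formula does not typecheck.
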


Concluding, let us briefly mention two new aspects of the above theorems when we pass to general categories.

1) In contrast to the case of diffeomorphisms, in categories we have no ``restriction map``, that is if $(\adif,\hadif)\in\AutCObject[\ArrCcateg]{\wuinc}$, then $\adif$ do not uniquely determine $\hadif$.

2) The above theorems are also applicable for different pairs of categories.
For instance, if $1\leq l<k \leq \infty$, then one can ask about classification of $\Cr{k}$-structures (or complex structures) on some manifold $\Aspace$ up to a $\Cr{l}$-diffeomorphism.
Moreover, $\Aspace$ can be an infinite-dimensional manifold.

\subsection{Application. Characterization of double cosets}
\label{sect:charact_double_cosets}

Let $\Hcateg$ be a group, and $\Ccateg$ be its subgroup.
Regard them as categories with one object $\Wdomain$ and all arrows being isomorphisms.
As a simple illustration of Theorem~\ref{th:CkLUV_HLUV:categories} we will present a characterization of double cosets $\dbl{\Ccateg}{\Hcateg}{\Ccateg}$.

Consider the following span $\Aspace\colon\spanCat{\Wdomain}{\id_{\Wdomain}}{\Wdomain}{\id_{\Wdomain}}{\Wdomain}$.
Then a pair of elements $\Aatlas = (\Ucmap,\Vcmap) \in \Hcateg\times\Hcateg$ is a \term{$\HCatlas$-atlas} (for $\Aspace$), if the ``\term{transition map}'' $\Atrmap:=\Vcmap\circ\Ucmap^{-1} \in \Ccateg$.
In other words, $\Ucmap,\Vcmap$ belongs to the same right adjacent class $\Hcateg/\Ccateg$.

Further, let $\Aatlas = (\aUcmap,\aVcmap)$ and $\Batlas = (\bUcmap,\bVcmap)$ be two $\HCatlas$-atlases, an $\dif\in\Hcateg$.
Then $\dif$ is
\begin{itemize}
\item an \term{isomorphism} $\mpair{\Aspace}{\Aatlas} \to \mpair{\Aspace}{\Batlas}$ if its \term{representations} $\adif:=\bUcmap\circ\dif\circ\aUcmap^{-1}$ and $\bdif:=\bVcmap\circ\dif\circ\aVcmap^{-1}$ belong to $\Ccateg$, and

\item an \term{isomorphism} $\mpair{\Aspace}{\Aatlas}\to\mpair{(\spanRev{\Aspace})}{\Batlas}$ if \term{representations} $\adif:=\bVcmap\circ\dif\circ\aUcmap^{-1}$ and $\bdif:=\bUcmap\circ\dif\circ\aVcmap^{-1}$ belong to $\Ccateg$.
\end{itemize}
Then Theorems~\ref{th:CkLUV_HLUV:categories} and~\ref{lm:act_DUWZ2_DW_categories} imply the following:
\begin{subcorollary}\label{cor:char_double_cosets}
Two $\HCatlas$-atlases $\Aatlas,\Batlas\in\Hcateg$ are isomorphic (resp.\ $\pm$-isomorphic) iff their transition maps $\Atrmap$ and $\Btrmap$ belong to the same double coset $\dbl{\Ccateg}{\Hcateg}{\Ccateg}$ (resp.\ $\pm$-double coset $\dbli{\Ccateg}{\Hcateg}$).

In particular, the correspondence
\[
    \Hcateg\times\Hcateg \ni (\Ucmap,\Vcmap) = \Aatlas \longmapsto \Atrmap = \Vcmap\circ\Ucmap^{-1} \in \Ccateg
\]
yields a bijection between
\begin{itemize}
\item isomorphism classes of $\HCatlas$-atlases and double cosets $\dbl{\Ccateg}{\Hcateg}{\Ccateg}$;
\end{itemize}
and also between
\begin{itemize}
\item $\pm$-isomorphism classes of $\HCatlas$-atlases and $\pm$-double cosets $\dbli{\Ccateg}{\Hcateg}$.
\end{itemize}
\end{subcorollary}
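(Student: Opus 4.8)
The plan is to read the Corollary off the two general classification results by specializing them to the degenerate span $\Aspace\colon\spanCat{\Wdomain}{\id_{\Wdomain}}{\Wdomain}{\id_{\Wdomain}}{\Wdomain}$, in which all three objects coincide and both legs are identities. First I would record the relevant (auto)morphism data in this situation: since the charts are exactly the vertical isomorphisms, they are arbitrary elements of $\Hcateg$, so an $\HCatlas$-atlas is just a pair $\Aatlas=(\aUcmap,\aVcmap)\in\Hcateg\times\Hcateg$ whose transition map $\Atrmap=\aVcmap\circ\aUcmap^{-1}$ ranges over all of $\Hcateg$, while the subgroups playing the role of the ``extendable'' automorphisms $\AutWExtU,\AutWExtV$ collapse to $\Ccateg$; for the exchanging case I would fix $\uvdif=\id_{\Wdomain}\in\IsomC{\Udomain}{\Vdomain}$. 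With these identifications the classification of Theorems~\ref{th:CkLUV_HLUV:categories} and~\ref{lm:act_DUWZ2_DW_categories} reduces, respectively, to the double cosets $\dbl{\Ccateg}{\Hcateg}{\Ccateg}$ and the $\pm$-double cosets $\dbli{\Ccateg}{\Hcateg}$.

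For the non-exchanging case I would run the diagram chase of~\eqref{equ:h_UU_VV__L_categories} verbatim. For $\dif\in\Hcateg$ the representations are $\adif=\bUcmap\circ\dif\circ\aUcmap^{-1}$ and $\bdif=\bVcmap\circ\dif\circ\aVcmap^{-1}$, and commutativity of the central rectangle gives $\Btrmap=\bdif\circ\Atrmap\circ\adif^{-1}$. Because the legs are identities there is no separate ``$\Wdomain$-component'' to carry along, so $\dif$ is an isomorphism $\mpair{\Aspace}{\Aatlas}\to\mpair{\Aspace}{\Batlas}$ precisely when $\adif,\bdif\in\Ccateg$, which says exactly that $\Atrmap$ and $\Btrmap$ share the coset $\Ccateg\,\Atrmap\,\Ccateg$. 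The converse is the reconstruction step: given $\Btrmap=\bdif\circ\Atrmap\circ\adif^{-1}$ with $\adif,\bdif\in\Ccateg$, I would set $\dif=\bUcmap^{-1}\circ\adif\circ\aUcmap$ and check that both of its representations land in $\Ccateg$, so that $\mu'$ is a well-defined injection.

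For the exchanging case I would repeat the chase with diagram~\eqref{equ:h_UV_VU__L_categories} attached to the reversed span $\spanRev{\Aspace}$, whose last two rows are flipped; here the representations are $\adif=\bVcmap\circ\dif\circ\aUcmap^{-1}$ and $\bdif=\bUcmap\circ\dif\circ\aVcmap^{-1}$, and the analogous computation yields $\Btrmap^{-1}=\bdif\circ\Atrmap\circ\adif^{-1}$, i.e.\ $\Btrmap\in\Ccateg\,\Atrmap^{-1}\,\Ccateg$. Combining the two cases shows that a $\pm$-isomorphism exists iff $\Btrmap\in\Ccateg\,\Atrmap\,\Ccateg\cup\Ccateg\,\Atrmap^{-1}\,\Ccateg$, which is a single orbit of the $\AutWExtU\wr\bZ_{2}$-action of Theorem~\ref{lm:act_DUWZ2_DW_categories} specialized at $\uvdif=\id_{\Wdomain}$, hence a single $\pm$-double coset in $\dbli{\Ccateg}{\Hcateg}$.

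Finally I would dispatch the surjectivity clause that promotes both injections to bijections: by the two theorems it suffices that every $\gdif\in\Hcateg$ arise as a transition map, and the atlas $(\id_{\Wdomain},\gdif)$ has transition map $\gdif$, so this is immediate. I expect the only genuinely delicate point to be the bookkeeping of the previous paragraphs, namely making sure that the transition maps are allowed to range over the whole group $\Hcateg$ (via the freedom in the charts) while the acting subgroups are $\Ccateg$, and that specializing the general action at $\uvdif=\id_{\Wdomain}$ really identifies its orbits with the symmetrized cosets $\Ccateg\,\gdif\,\Ccateg\cup\Ccateg\,\gdif^{-1}\,\Ccateg$; everything else is a direct specialization of diagram chases already performed in the proofs of Theorems~\ref{th:CkLUV_HLUV:categories} and~\ref{lm:act_DUWZ2_DW_categories}.
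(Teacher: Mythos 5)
Your proposal is correct and follows essentially the same route as the paper: the corollary is read off by specializing Theorems~\ref{th:CkLUV_HLUV:categories} and~\ref{lm:act_DUWZ2_DW_categories} to the one-object span $\spanCat{\Wdomain}{\id_{\Wdomain}}{\Wdomain}{\id_{\Wdomain}}{\Wdomain}$, and your diagram chases, the reconstruction $\dif=\bUcmap^{-1}\circ\adif\circ\aUcmap$, and the surjectivity check via the atlas $(\id_{\Wdomain},\gdif)$ are exactly the specializations of the arguments behind those theorems.

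The ``delicate point'' you flagged deserves to be made explicit, because it is a wrinkle in the paper's definitions rather than a defect of your argument. Definition~\ref{def:HC_atlas} requires the transition map of an $\HCatlas$-atlas to lie in the \emph{subcategory}, and Section~\ref{sect:charact_double_cosets} repeats this: a pair $(\Ucmap,\Vcmap)\in\Hcateg\times\Hcateg$ is declared an $\HCatlas$-atlas only if $\Vcmap\circ\Ucmap^{-1}\in\Ccateg$. Under that literal reading the corollary's bijection would fail: any two such atlases are isomorphic (take $\dif=\bUcmap^{-1}\circ\aUcmap$, whose representations are $\id_{\Wdomain}$ and $\Btrmap\circ\Atrmap^{-1}\in\Ccateg$), so there is a single isomorphism class, whereas $\dbl{\Ccateg}{\Hcateg}{\Ccateg}$ is in general not a single point. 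Your reading --- atlases are \emph{arbitrary} pairs in $\Hcateg\times\Hcateg$, so transition maps range over all of $\Hcateg$, and only the isomorphisms are required to have representations in $\Ccateg$ --- is the one under which the stated bijections with $\dbl{\Ccateg}{\Hcateg}{\Ccateg}$ and $\dbli{\Ccateg}{\Hcateg}$ actually hold, and it is plainly what the corollary intends, since its displayed correspondence is defined on all of $\Hcateg\times\Hcateg$. So your proof establishes the intended statement; the only improvement would be to say explicitly that you are relaxing the transition-map condition of Definition~\ref{def:HC_atlas} from $\Ccateg$ to $\Hcateg$ in this specialization --- a relaxation the paper itself makes silently.
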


\subsection*{Acknowledgement}
This work was supported by a grant from the Simons Foundation (SFI-PD-Ukraine-00014586, L.M.V. and S.I.M.).
The authors are grateful to Taras Bakakh for discussions of foundations of sets theory, and to Volodymyr Lyubashenko and Bohdan Feshchenko for discussions of categories.

\def\cprime{$'$} \def\cprime{$'$} \def\cprime{$'$} \def\cprime{$'$}
  \def\cprime{$'$} \def\cprime{$'$} \def\cprime{$'$} \def\cprime{$'$}
  \def\cprime{$'$} \def\cprime{$'$} \def\cprime{$'$} \def\cprime{$'$}
  \def\cprime{$'$} \def\cprime{$'$}

% \bibliographystyle{plainurl}
% \bibliography{biblio}

\end{document}